\setlist[enumerate]{align=left} 
\numberwithin{equation}{section}
\newtheorem{theorem}{Theorem}[section]
\newtheorem{lemma}[theorem]{Lemma}
\newtheorem{proposition}[theorem]{Proposition}
\theoremstyle{definition}
\newtheorem{remark}[theorem]{Remark}
\theoremstyle{definition}
\newtheorem{definition}[theorem]{Definition}
\newtheoremstyle{withcitation}
{\topsep}{\topsep}{\itshape}{}{\bfseries}{.}{ }{\thmname{#1}\thmnumber{ #2}\thmnote{ (#3)}}
\theoremstyle{withcitation}
\newcommand{\condition}[1]{%
  \phantomsection\label{cond:#1}
  \text{(#1)}
}
\newcommand{\refcond}[1]{%
  \hyperref[cond:#1]{(#1)}
}
\title{Ground state solutions of $p$-Laplacian equations with nonnegative potentials on Lattice graphs}
\author{Xinrong Zhao}
\date{}
\begin{document}

\maketitle

\begin{abstract}
    In this paper, we study the $p$-Laplacian equation 
    $$
    -\Delta_p u + V(x)|u|^{p-2}u = f(x,u)
    $$ 
    on the lattice graph $\mathbb{Z}^N$ with nonnegative potentials, where $\Delta_p$ is the discrete $p$-Laplacian and $p\in(1,\infty)$. By employing the Nehari manifold method, we establish the existence of ground state solutions under suitable growth conditions on the nonlinearity $f(x,u)$, provided that the potential $V(x)$ is either periodic or bounded. Moreover, we prove that if $f$ is odd in $u$ and $p\geq2$, then the above equation admits infinitely many geometrically distinct solutions. Finally, we extend these results from $\mathbb{Z}^N$ to the more general setting of Cayley graphs.
\end{abstract}

\section{Introduction}\label{sec:1}
The following $p$-Laplacian equation
\begin{equation}\label{eq:EquationFor_Continuous}
        -\Delta_p u + V(x)|u|^{p-2}u = f(x,u),\quad u\in W^{1,p}(\Omega),
\end{equation}
where $\Delta_p u:={\rm div}(|\nabla u|^{p-2}\nabla u)$ and $\Omega$ is a domain in $\mathbb{R}^N$, has attracted significant interest over the past several decades. In the special case $p=2$, equation \eqref{eq:EquationFor_Continuous} reduces to the classical semilinear Schrödinger equation, which has been extensively studied in the literature, see, for example, \cite{MR807816, MR1162728, MR2100913, MR2136816, Szulkin-Weth-paper, Szulkin2010method}. For the general case $p>1$, we refer readers to \cite{MR2037753, MR1856715, MR2371112, MR1998432, MR2472927} for results on bounded domains, and to \cite{MR2194501, MR2474596, MR2458640, MR2794422} for results on the entire space $\mathbb{R}^N$. 
It is well known that the solutions of \eqref{eq:EquationFor_Continuous} are precisely the critical points of the energy functional 
$$
\Phi(u)= \frac{1}{p}\int_{\mathbb{R}^N}|\nabla u|^p+V(x)|u|^p \,dx -\int_{\mathbb{R}^N}F(x,u)\, dx,
$$
where $F(x,t)=\int_0^{t}f(x,s)\, ds$. Consequently, variational methods provide a powerful framework for studying the solutions of \eqref{eq:EquationFor_Continuous}. 
The ground state solutions of \eqref{eq:EquationFor_Continuous}, defined as the solutions corresponding to the least positive critical value of $\Phi$, are of particular importance. 

For $p=2$, Li et al. \cite{Li-Wang-Zeng} established the existence of ground state solutions of equation \eqref{eq:EquationFor_Continuous} on $\mathbb{R}^N$ for two types of potentials $V(x)$: periodic potentials, and bounded potentials satisfying $\lim_{|x|\to\infty} V(x) = \sup_{\mathbb{R}^N} V(x) < +\infty$. Their approach is based on minimizing the functional $\Phi$ over the Nehari manifold $\mathcal{N}$ under suitable conditions on the nonlinearity $f$. Subsequently, Szulkin and Weth \cite{Szulkin-Weth-paper} generalized the Nehari manifold approach to the indefinite setting, proving the existence of ground state solutions and, additionally, infinitely many geometrically distinct solutions when $f$ is odd in $u$. Later, Liu \cite{LiuShibo} extended the results in \cite{Li-Wang-Zeng} to general $p>1$ using a mountain pass type argument and weakened the conditions on the nonlinearity $f$. 

Recently, there has been growing interest in the study of various differential equations on graphs. For general graphs, several existence results have been established for the equation $-\Delta u+V(x)u=f(x,u)$ under the assumption that the potential function $V(x)$ tends to infinity as the combinatorial distance $d(x, x_0)\to\infty$ for some $x_0\in\mathbb{V}$ (see, e.g., Grigor’yan et al. \cite{MR3665801} and Zhang–Zhao \cite{MR3833747}).
When restricted to the lattice graphs $\mathbb{Z}^N$, more refined results are available. In particular, Hua and Xu \cite{Hua-XU} showed that the results of Li et al. \cite{Li-Wang-Zeng} in the continuous setting mentioned above can be extended to $\mathbb{Z}^N$. They further generalized their work to the general $p$-Laplacian equation $-\Delta_p u+V(x)|u|^{p-2}u=f(x,u)$ on $\mathbb{Z}^N$ for any $p\in(1,\infty)$ in \cite{Hua-Xu-p-Laplacian}. 

It is noteworthy that in all previous results regarding equation \eqref{eq:EquationFor_Continuous} on either $\mathbb{R}^N$ or $\mathbb{Z}^N$, the potential $V(x)$ is always assumed to have a positive lower bound. The main challenge in treating the case $\inf_{\mathbb{R}^N} V(x)=0$ stems from the failure of embedding the homogeneous Sobolev space $\mathcal{D}^{1,p}(\mathbb{R}^N)$ into $\ell^q(\mathbb{R}^N)$ for $q\neq p^*:=\frac{Np}{N-p}$. In the setting of $\mathbb{Z}^N$, the following discrete Sobolev inequality holds, see \cite[Theorem 3.6]{MR3397320} for proof:
$$
\|u\|_{\ell^{p^*}(\mathbb{Z}^N)}\leq C_p\|u\|_{\mathcal{D}^{1,p}(\mathbb{Z}^N)},\quad \forall u\in{\mathcal{D}^{1,p}(\mathbb{Z}^N)},
$$
where $N\geq2$, $p\in[1,N)$. Since $\ell^q(\mathbb{Z}^N)$ embeds into $\ell^r(\mathbb{Z}^N)$ for any $r > q\geq1$, see \cref{lem:LargeMoreThenSmall}, one verifies that for all $q\geq p^*$, the above discrete Sobolev inequality remains valid. That is, there exists a constant $C_{p,q}>0$ such that
\begin{equation}\label{eq:DiscreteSobolev}
    \|u\|_{\ell^q(\mathbb{Z}^N)}\leq C_{p,q}\|u\|_{\mathcal{D}^{1,p}(\mathbb{Z}^N)},\quad \forall u\in{\mathcal{D}^{1,p}(\mathbb{Z}^N)},
\end{equation}
where $N\geq2$, $p\in[1,N)$ and $q\geq p^*=\frac{Np}{N-p}$.
This implies that $\mathcal{D}^{1,p}(\mathbb{Z}^N)$ can indeed be embedded into $\ell^q(\mathbb{Z}^N)$ for all $q\geq p^*$. Consequently, it is possible to prove the existence of ground state solutions for the $p$-Laplacian equation \eqref{eq:EquationFor_Continuous} on $\mathbb{Z}^N$ in the case $\inf_{\mathbb{Z}^N} V(x)=0$, which constitutes the main focus of the present work.

In this paper, we consider the $p$-Laplacian equation
$$
-\Delta_p u + V(x)|u|^{p-2}u = f(x,u)
$$
on graphs. We begin by introducing some basic notations on graphs. Let $\mathbb{G} = (\mathbb{V}, \mathbb{E})$ be a simple, undirected, locally finite graph with vertex set $\mathbb{V}$ and edge set $\mathbb{E}$. For two vertices $x,y$, we write $x\sim y$ if there is an edge connecting them.

We denote the space of all real-valued functions on $\mathbb{G}$ by $C(\mathbb{V})$ and the subspace of functions with finite support by $C_c(\mathbb{V})$. The $p$-Laplacian on $\mathbb{G} = (\mathbb{V}, \mathbb{E})$ for a function $u\in C(\mathbb{V})$ is defined as
$$
\Delta_p u(x) := \sum_{y\in \mathbb{V},y\sim x} |u(y)-u(x)|^{p-2} (u(y) - u(x)),\ \forall x\in\mathbb{V}.
$$
The space $E(\mathbb{V})$, or simply $E$ when $\mathbb{V}$ is clear from the context, is defined as the completion of $C_c(\mathbb{V})$ with respect to the norm
\begin{equation}\label{eq:ExpressionOfNorm}
    \|u\|:= \left(\sum_{\substack{x,y\in\mathbb{V}\\x\sim y}}\frac{1}{2}|u(y)-u(x)|^p+\sum_{x\in\mathbb{V}}V(x)|u(x)|^p\right)^{1/p}.
\end{equation}
Note that when $V(x)\equiv0$, the space $E(\mathbb{V})$ reduces to the homogeneous Sobolev space $\mathcal{D}^{1,p}(\mathbb{V})$.

The integer lattice graph $\mathbb{Z}^N$ is defined as the graph with vertex set $\mathbb{V} = \mathbb{Z}^N$ and edge set
$$
\mathbb{E} = \left\{ \{x, y\} : x, y \in \mathbb{Z}^N, \sum_{i=1}^N |x_i - y_i| = 1 \right\}.
$$
For a given $T\in\mathbb{N}$, a function $h$ defined on $\mathbb{Z}^N$ is said to be $T$-periodic if it satisfies
$$
h(x + T e_i ) = h(x), \ \forall x \in \mathbb{Z}^N , 1 \leq i \leq N,
$$
where $e_i$ denotes the unit vector in the $i$-th coordinate.

We are concerned with the solutions of the following $p$-Laplacian type equation on the graph $\mathbb{G}$:
\begin{equation}\label{eq:MainEquation}
    \begin{cases}
        -\Delta_p u + V(x)|u|^{p-2}u = f(x,u), x \in \mathbb{V},\\
        u\in E(\mathbb{V}).
    \end{cases}
\end{equation}

\begin{theorem}\label{thm: 1}
Consider the $p$-Laplacian equation \eqref{eq:MainEquation} on $\mathbb{Z}^N$, $1<p<N$. Suppose that the functions $V:\mathbb{Z}^N\to \mathbb{R}$ and $f: \mathbb{Z}^N\times \mathbb{R}\to \mathbb{R}$ satisfy the following conditions:
\begin{enumerate}
    \item[\rm\condition{$S_1$}] $V(x)$ and $f(x,t)$ are both T-periodic in $x$ and $\inf_{x\in\mathbb{Z}^N}V(x)\geq 0$. 
    
    \item[\rm\condition{$S_2$}] $f(x,t)$ is continuous with respect to $t$ for every $x\in\mathbb{Z}^N$ and $f(x,t)\leq a(1+|t|^{q-1})$ for some constant $a>0$ and $q> p^*$.

    \item[\rm\condition{$S_3$}] $f(x,t)=o(|t|^{p^*-1})$ uniformly in $x$ as $|t|\to 0$, where $p^*=\frac{Np}{N-p}$.
    
    \item[\rm\condition{$S_4$}] $t \mapsto \frac{f(x,t)}{|t|^{p-1}}$ is strictly increasing on $(-\infty, 0) \cup (0, \infty)$ for each $x \in \mathbb{Z}^N$.
    
    \item[\rm\condition{$S_5$}]
    $\frac{F(x,t)}{|t|^p}\to\infty$ uniformly in $x$ as $|t|\to\infty$, where $F(x,t)=\int_{0}^{t}f(x,s) ds$.

\end{enumerate}
    Then equation \eqref{eq:MainEquation} admits a ground state solution. Moreover, if $F(x,t)\leq F(x,|t|)$ for all $(x,t)\in\mathbb{Z}^N\times\mathbb{R}$, then every ground state solution of \eqref{eq:MainEquation} is either positive or negative.
\end{theorem}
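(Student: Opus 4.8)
The plan is to realize the ground state as a minimizer of the energy functional
$$\Phi(u)=\tfrac1p\|u\|^p-\sum_{x\in\mathbb Z^N}F(x,u(x))$$
over the Nehari manifold $\mathcal N=\{u\in E\setminus\{0\}:\Phi'(u)[u]=0\}$, following the scheme of Szulkin and Weth adapted to $\mathbb Z^N$. First I would verify that $\Phi\in C^1(E)$ and that critical points of $\Phi$ are exactly the solutions of \eqref{eq:MainEquation}; the growth bounds $(S_2)$–$(S_3)$ together with the embedding \eqref{eq:DiscreteSobolev} make all sums involving $f$ and $F$ finite and continuously differentiable. Next, for each fixed $u\ne0$ I would analyze the fiber map $g_u(t)=\Phi(tu)$ on $t>0$: condition $(S_3)$ forces $g_u(t)>0$ for small $t$, condition $(S_5)$ forces $g_u(t)\to-\infty$ as $t\to\infty$, and the strict monotonicity $(S_4)$ guarantees a unique critical point $t_u>0$ that is the global maximum and satisfies $t_uu\in\mathcal N$. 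Writing $\widehat F(x,t)=\tfrac1p f(x,t)t-F(x,t)$, condition $(S_4)$ also yields $\widehat F\ge0$ (with $\widehat F(x,t)>0$ for $t\ne0$), so that on $\mathcal N$ one has $\Phi(u)=\sum_x\widehat F(x,u)\ge0$.

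Then I would show $c:=\inf_{\mathcal N}\Phi>0$: combining $(S_2)$, $(S_3)$ and \eqref{eq:DiscreteSobolev} gives a uniform lower bound $\|u\|\ge\rho>0$ on $\mathcal N$, and since each $u\in\mathcal N$ maximizes its own fiber, $\Phi(u)\ge\Phi(\rho u/\|u\|)\ge\delta>0$. To produce a minimizer I would pass to a Palais--Smale sequence $(u_n)\subset\mathcal N$ with $\Phi(u_n)\to c$ and $\Phi'(u_n)\to0$ (via the $C^1$ diffeomorphism from the unit sphere onto $\mathcal N$ and Ekeland's variational principle), and use $(S_5)$ to argue by contradiction that $(u_n)$ is bounded. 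The crux is the loss of compactness on the non-compact graph $\mathbb Z^N$: a discrete Lions-type lemma rules out vanishing (otherwise $u_n\to0$ in $\ell^q$, forcing $\Phi(u_n)\to0\ne c$), and the $T$-periodicity $(S_1)$ lets me translate the sequence by a lattice vector so that, after relabeling, $u_n\rightharpoonup u_0\ne0$ in $E$. Since \eqref{eq:DiscreteSobolev} embeds $E$ into $\ell^\infty$, weak convergence implies pointwise convergence; testing $\Phi'(u_n)\to0$ against finitely supported functions then gives $\Phi'(u_0)=0$, so $u_0$ is a nontrivial solution lying in $\mathcal N$, whence $\Phi(u_0)\ge c$. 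Finally, Fatou's lemma applied to $\widehat F\ge0$ gives $\Phi(u_0)=\sum_x\widehat F(x,u_0)\le\liminf_n\sum_x\widehat F(x,u_n)=c$, so $\Phi(u_0)=c$ and $u_0$ is a ground state. I expect this compactness step to be the main obstacle, as it is precisely where periodicity and the discrete Sobolev embedding must cooperate to restore the compactness lost on $\mathbb Z^N$.

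For the sign statement, let $u_0$ be any ground state and compare it with $|u_0|$. The elementary inequality $\big||u_0(y)|-|u_0(x)|\big|\le|u_0(y)-u_0(x)|$ gives $\||u_0|\|\le\|u_0\|$, while $F(x,t)\le F(x,|t|)$ gives $\sum_xF(x,t|u_0|)\ge\sum_xF(x,tu_0)$ for $t>0$; hence $\Phi(t|u_0|)\le\Phi(tu_0)$, so that $\max_{t>0}\Phi(t|u_0|)\le\max_{t>0}\Phi(tu_0)=\Phi(u_0)=c$. Since $v:=t_{|u_0|}|u_0|\in\mathcal N$ satisfies $\Phi(v)=\max_{t>0}\Phi(t|u_0|)\ge c$, all these inequalities are equalities; in particular $v$ is again a ground state (hence a solution) and $\||u_0|\|=\|u_0\|$, which forces equality in each edge term, i.e. $u_0(x)u_0(y)\ge0$ whenever $x\sim y$. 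Because $(S_3)$ yields $f(x,0)=0$ and $V\ge0$, a discrete strong maximum principle applies to the nonnegative solution $v$: at a vertex with $v(x)=0$ the equation reduces to $\sum_{y\sim x}|v(y)|^{p-2}v(y)=0$, which together with $v\ge0$ forces $v$ to vanish at every neighbour, and connectedness of $\mathbb Z^N$ then gives $v\equiv0$, contradicting $v\ne0$. Thus $v>0$ everywhere, so $u_0$ never vanishes; combined with $u_0(x)u_0(y)\ge0$ on edges this gives $u_0(x)u_0(y)>0$ for adjacent $x,y$, and connectedness makes the sign of $u_0$ constant. Hence every ground state is either positive or negative.
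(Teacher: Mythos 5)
Your proposal is correct and follows essentially the same route as the paper: Nehari-manifold minimization in the Szulkin--Weth framework, with vanishing excluded by the discrete Sobolev/interpolation argument, $T$-periodicity used to translate the minimizing Palais--Smale sequence to a nonzero weak limit, and Fatou's lemma applied to $\tfrac1p f(x,u)u-F(x,u)\ge 0$ to close the existence step; the sign statement likewise proceeds by comparing $u_0$ with $|u_0|$, a pointwise argument from the equation at a zero of the nonnegative ground state, and connectedness of $\mathbb{Z}^N$. The only cosmetic difference is that you work with $v=t_{|u_0|}\,|u_0|$ without verifying $t_{|u_0|}=1$ (which the paper does via the equality case of its fiber-map inequality), and this is harmless since your equality chain still yields $\||u_0|\|=\|u_0\|$ and the positivity of $v$.
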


\begin{theorem}\label{thm: 2}
    Consider the $p$-Laplacian equation \eqref{eq:MainEquation} on $\mathbb{Z}^N$, $1<p<N$. Suppose that the functions $V:\mathbb{Z}^N\to \mathbb{R}$ and $f: \mathbb{Z}^N\times \mathbb{R}\to \mathbb{R}$ satisfy the conditions {\rm\refcond{$S_2$} }$\sim${\rm\refcond{$S_5$}}. Let $V_{-}:=(|V|-V)/{2}$ denotes the negative part of $V$. Assume further that
    \begin{enumerate}
    \item[\rm(i)] $ 0 \leq \sup_{x \in \mathbb{Z}^N} V(x) = V_{\infty} < \infty $, where $ V_{\infty} := \lim_{|x| \to \infty} V(x) $;
    \item[\rm(ii)] there exists some $ r \geq p^* $ such that
    $$
    \|V_{-}\|_{\ell^{\frac{r}{r-p}}(\mathbb{Z}^N)}< S_{p,r}^p,\quad
    \text{where}\quad 0<S_{p,r}:=\inf_{u\neq 0}\frac{\|u\|_{\mathcal{D}^{1,p}(\mathbb{Z}^N)}}{\|u\|_{\ell^r(\mathbb{Z}^N)}}.
    $$
    \end{enumerate}
    Then equation \eqref{eq:MainEquation} admits a ground state solution. Moreover, if $F(x,t)\leq F(x,|t|)$ for all $(x,t)\in\mathbb{Z}^N\times\mathbb{R}$, then every ground state solution of \eqref{eq:MainEquation} is either positive or negative.
\end{theorem}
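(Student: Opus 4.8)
\section*{Proof proposal for \texorpdfstring{\cref{thm: 2}}{Theorem 2}}

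The plan is to run the Nehari--manifold scheme already used for \cref{thm: 1}, replacing the two ingredients that there relied on periodicity and a positive lower bound on $V$: the positive definiteness of the energy norm, and the recovery of compactness (previously obtained by translating a minimizing sequence back). First I would fix the functional--analytic setting. Writing $V=V_{+}-V_{-}$ and applying H\"older's inequality with exponents $\frac{r}{r-p}$ and $\frac{r}{p}$ followed by the discrete Sobolev inequality \eqref{eq:DiscreteSobolev}, one gets $\sum_{x}V_{-}(x)|u(x)|^{p}\le \|V_{-}\|_{\ell^{r/(r-p)}}\|u\|_{\ell^{r}}^{p}\le \|V_{-}\|_{\ell^{r/(r-p)}}S_{p,r}^{-p}\|u\|_{\mathcal{D}^{1,p}}^{p}$. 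Condition (ii) makes $\theta:=\|V_{-}\|_{\ell^{r/(r-p)}}S_{p,r}^{-p}<1$, so that $\|u\|^{p}\ge(1-\theta)\|u\|_{\mathcal{D}^{1,p}}^{p}$. Hence $E$ embeds continuously into $\mathcal{D}^{1,p}(\mathbb{Z}^{N})$ and, by \eqref{eq:DiscreteSobolev}, into $\ell^{q}(\mathbb{Z}^{N})$ for every $q\ge p^{*}$, and the energy quadratic form is positive definite. Conditions \refcond{$S_2$} and \refcond{$S_3$} then give, as for \cref{thm: 1}, that $\Phi(u)=\frac1p\|u\|^{p}-\sum_{x}F(x,u(x))$ is well defined and of class $C^{1}$ on $E$, with critical points being the solutions of \eqref{eq:MainEquation}.

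Next I would reuse the Nehari--manifold analysis. Conditions \refcond{$S_4$} and \refcond{$S_5$} guarantee that for each $u\neq0$ the fibre $t\mapsto\Phi(tu)$ has a unique critical point $t_{u}>0$, a strict global maximum, so $\mathcal{N}=\{u\neq0:\langle\Phi'(u),u\rangle=0\}$ is a natural constraint and $c:=\inf_{\mathcal{N}}\Phi=\inf_{u\neq0}\max_{t>0}\Phi(tu)>0$. Minimizing over $\mathcal{N}$, passing to a Palais--Smale sequence via Ekeland's principle, and exploiting the superhomogeneity in \refcond{$S_5$}, one obtains a bounded minimizing sequence $u_{n}$; after extraction $u_{n}\rightharpoonup u$ in $E$ and $u_{n}\to u$ pointwise, and a discrete Brezis--Lieb argument together with \refcond{$S_2$}, \refcond{$S_3$} shows $\Phi'(u)=0$ regardless of whether $u=0$. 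If $u\neq0$, then $u\in\mathcal{N}$ and Fatou's lemma give $c\le\Phi(u)\le\liminf\Phi(u_{n})=c$, so $u$ is the desired ground state.

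I expect the main obstacle to be excluding the vanishing case $u=0$, where compactness is lost because $\Phi$ is no longer translation invariant. The plan is a strict energy comparison with the problem at infinity. Condition (i) forces $V(\cdot+y)\to V_{\infty}$ and $V_{-}(\cdot+y)\to0$ as $|y|\to\infty$, so a bubble escaping to infinity feels the constant potential $V_{\infty}$; writing $\Phi_{\infty}$ for the associated functional (with potential $V_{\infty}$ and a limit nonlinearity extracted as a locally uniform limit of the translates $f(\cdot+y_{n},\cdot)$, controlled by the uniform bounds \refcond{$S_2$}, \refcond{$S_3$}, \refcond{$S_5$}) and $c_{\infty}$ for its ground--state level, a splitting argument shows that $u=0$ forces $c\ge c_{\infty}$. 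On the other hand, since $V(x)\le V_{\infty}$ with strict inequality somewhere (otherwise $V\equiv V_{\infty}$ and the problem becomes autonomous in $V$), evaluating $\Phi$ on a strictly positive ground state $w$ of the limit problem gives $\max_{t>0}\Phi(tw)<\max_{t>0}\Phi_{\infty}(tw)=c_{\infty}$, whence $c<c_{\infty}$ --- a contradiction. The delicate points are precisely the extraction of a well-defined limit nonlinearity from the non-periodic translates and the quantitative splitting of energy, which make the argument heavier than in \cref{thm: 1}.

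Finally, for the sign statement, assume $F(x,t)\le F(x,|t|)$. Since $\bigl||u|(y)-|u|(x)\bigr|\le|u(y)-u(x)|$ and the potential term is unchanged, one has $\Phi(t|u|)\le\Phi(tu)$ for all $t>0$; applying this at the Nehari projection $t_{0}$ of $|u|$ shows that $t_{0}|u|$ is again a ground state and that all inequalities are equalities, which forces $u$ not to change sign across any edge, hence $u\ge0$ or $u\le0$ on the connected graph $\mathbb{Z}^{N}$. A strong maximum principle then upgrades this to strict sign: if $u(x_{0})=0$, the equation gives $\sum_{y\sim x_{0}}|u(y)|^{p-2}u(y)=f(x_{0},0)=0$ by \refcond{$S_3$}, forcing $u\equiv0$ on the neighbours of $x_{0}$ and, by iteration, everywhere, contradicting $u\neq0$. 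Hence every ground state is either positive or negative.
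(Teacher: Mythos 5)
Your proposal follows the paper's proof in all essentials: the coercivity estimate $\|u\|^p\geq(1-\theta)\|u\|_{\mathcal{D}^{1,p}}^p$ obtained from (ii) via H\"older's inequality and \eqref{eq:DiscreteSobolev}, the Nehari scheme of \cref{thm: 1} with Ekeland plus Fatou, the comparison with the limit problem at infinity with constant potential $V_\infty$, the strict inequality $b<b_\infty$ obtained by evaluating $\Phi$ along a positive ground state of the limit problem (unless $V\equiv V_\infty$, which reduces to the periodic case), and the contradiction when the minimizing bumps escape to infinity; the sign statement at the end is exactly Step 3 of the paper's proof of \cref{thm: 1}.

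The one place where you genuinely diverge is the treatment of the $x$-dependence of $f$, and that step is a gap. The paper's proof tacitly takes $f$ autonomous: its limit equation is $-\Delta_p u + V_\infty|u|^{p-2}u=f(u)$ with the \emph{same} $f$, and its Fatou computation uses the identity $\sum_x\bigl(\frac1p f(u_n)u_n-F(u_n)\bigr)=\sum_x\bigl(\frac1p f(\bar u_n)\bar u_n-F(\bar u_n)\bigr)$ for the translated sequence, which requires the nonlinear terms to be translation invariant. You correctly observed that \refcond{$S_2$}$\sim$\refcond{$S_5$} contain no periodicity and tried to repair this by extracting a ``limit nonlinearity'' as a locally uniform limit of the translates $f(\cdot+y_n,\cdot)$. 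That extraction is not available under the stated hypotheses: \refcond{$S_2$}, \refcond{$S_3$}, \refcond{$S_5$} are growth bounds, not equicontinuity bounds, so no Arzel\`a--Ascoli argument applies; a pointwise subsequential limit (say via Helly's theorem, using the monotonicity in \refcond{$S_4$}) need not be continuous in $t$ nor strictly monotone, so \cref{thm: 1} cannot be invoked for the limit problem to produce the positive ground state $w$ you evaluate at. More fatally, for genuinely $x$-dependent $f$ the difference $\Phi_\infty(tw)-\Phi(tw)$ consists of the potential term $\frac{t^p}{p}\sum_x(V_\infty-V)|w|^p\geq0$ \emph{plus} the term $\sum_x\bigl[F(x,tw)-F_\infty(tw)\bigr]$, which has no sign; hence your key strict inequality $\max_{t>0}\Phi(tw)<c_\infty$, and likewise the Fatou comparison inside your splitting step, cannot be established. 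In short: under the reading the paper itself uses ($f$ independent of $x$, or at least with translation-invariant nonlinear sums), your limit-nonlinearity extraction is unnecessary and your proof coincides with the paper's; under the literal reading with non-periodic, $x$-dependent $f$, that step fails --- and this is a defect your attempt shares with, rather than inherits from, the paper's own argument.
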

\begin{remark}
This theorem allows the potential $ V(x) $ to have a nonzero negative part. The existence of a ground state solution is guaranteed provided that the negative part $ V_{-} $ is sufficiently small in the $ \ell^{\frac{r}{r-p}} $-norm for some $r\geq p^*$. This phenomenon contrasts with the continuous case, where the expression in \eqref{eq:ExpressionOfNorm} may fail to define a norm if $ V $ is not nonnegative. The result also underscores the role of the best Sobolev constant $ S_{p,r} $, as its value provides an admissible bound for the negativity of $ V $ in the existence criterion.
\end{remark}

When $f(x,u)=|u|^{q-2}u$, Hua and Li \cite{Hua-Li} proved that for all $q>p^*$, the best Sobolev constant 
\begin{equation}
    0<S_{p,q}=\inf_{u\in\mathcal{D}^{1,p}(\mathbb{Z}^N)\setminus\{0\}}\frac{\|u\|_{\mathcal{D}^{1,p}(\mathbb{Z}^N)}}{\|u\|_{\ell^q(\mathbb{Z}^N)}}
\end{equation}
is attained by some positive function $u\in\mathcal{D}^{1,p}(\mathbb{Z}^N)$, thereby proving the existence of positive solutions of the equation
\begin{equation}\label{eq:SimpleEq}
    -\Delta_p u(x)=|u(x)|^{q-2}u(x),\quad u\in\mathcal{D}^{1,p}(\mathbb{Z}^N).
\end{equation}
Note that \Cref{thm: 1} also ensures the existence of positive solutions to \eqref{eq:SimpleEq}. In fact, up to a positive scaling factor, the ground state solutions of equation \eqref{eq:SimpleEq} are precisely the functions that attain the best Sobolev constant $S_{p,q}$.

\begin{theorem}\label{thm:|u|p}
    Assume that $1<p<N$ and $q>p^*$, then every ground state solution of equation \eqref{eq:SimpleEq} attains the best Sobolev constant
    $$
    S_{p,q}=\inf_{u\in\mathcal{D}^{1,p}(\mathbb{Z}^N)\setminus\{0\}}\frac{\|u\|_{\mathcal{D}^{1,p}(\mathbb{Z}^N)}}{\|u\|_{\ell^q(\mathbb{Z}^N)}}.
    $$
    Conversely, if $u\neq0$ attains the best Sobolev constant, then there exists a positive number $t_u$ such that $t_u u$ is a ground state solution of equation \eqref{eq:SimpleEq}.
\end{theorem}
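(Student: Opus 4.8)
The plan is to exploit the scaling structure of the pure-power equation \eqref{eq:SimpleEq}. Throughout write $\|\cdot\|_{\mathcal D}=\|\cdot\|_{\mathcal{D}^{1,p}(\mathbb{Z}^N)}$ and $\|\cdot\|_q=\|\cdot\|_{\ell^q(\mathbb{Z}^N)}$; the associated energy functional is
$$
\Phi(u)=\tfrac1p\|u\|_{\mathcal D}^p-\tfrac1q\|u\|_q^q ,
$$
so that its nontrivial critical points lie on the Nehari manifold $\mathcal{N}=\{u\neq0:\ \|u\|_{\mathcal D}^p=\|u\|_q^q\}$. First I would verify that $f(x,t)=|t|^{q-2}t$ (with $V\equiv0$) satisfies \refcond{$S_1$}--\refcond{$S_5$} for every $q>p^*$, so that \Cref{thm: 1} applies: a ground state exists, every nontrivial solution lies on $\mathcal{N}$, and the ground state energy equals $c:=\inf_{\mathcal{N}}\Phi$.

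The key computation is the fibering map. For fixed $u\neq0$ set $A=\|u\|_{\mathcal D}^p$, $B=\|u\|_q^q$ and consider $\phi(t)=\Phi(tu)=\tfrac{t^p}pA-\tfrac{t^q}qB$ for $t>0$. Since $q>p$, the map $\phi$ has a unique positive critical point $t_u=(A/B)^{1/(q-p)}$, which is its global maximum, with $t_uu\in\mathcal{N}$; moreover $t_u=1$ exactly when $u\in\mathcal{N}$. Substituting $t_u$ back yields the scale-invariant identity
$$
\Phi(t_uu)=\Big(\tfrac1p-\tfrac1q\Big)\Big(\tfrac{\|u\|_{\mathcal D}}{\|u\|_q}\Big)^{\frac{pq}{q-p}} .
$$
Taking the infimum over $u\neq0$, using the definition of $S_{p,q}$ and the monotonicity of $s\mapsto s^{pq/(q-p)}$, gives $c=(\tfrac1p-\tfrac1q)S_{p,q}^{\,pq/(q-p)}$ and shows that $\Phi(t_uu)=c$ holds if and only if $u$ attains $S_{p,q}$.

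Both implications then follow from this identity. If $u$ is a ground state, then $u\in\mathcal{N}$ (so $t_u=1$) and $\Phi(u)=c$; the displayed identity forces $\|u\|_{\mathcal D}/\|u\|_q=S_{p,q}$, i.e. $u$ attains the best Sobolev constant. Conversely, if $u\neq0$ attains $S_{p,q}$, then $t_uu\in\mathcal{N}$ satisfies $\Phi(t_uu)=c=\inf_{\mathcal{N}}\Phi$, so $t_uu$ minimizes $\Phi$ over $\mathcal{N}$; promoting this constrained minimizer to a bona fide critical point of $\Phi$ — the natural-constraint property of $\mathcal N$ recorded in the proof of \Cref{thm: 1} — shows that $t_uu$ solves \eqref{eq:SimpleEq} with least energy, hence is a ground state.

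The only genuine obstacle is this last upgrade in the converse direction. For the explicit nonlinearity at hand one can avoid invoking the abstract machinery and argue directly: a minimizer $u$ of $S_{p,q}$ minimizes $\|w\|_{\mathcal D}^p$ subject to $\|w\|_q^q=1$, and since both functionals are $C^1$ on $\mathcal{D}^{1,p}(\mathbb{Z}^N)$ and the constraint functional has nonvanishing derivative at $u$, the Lagrange multiplier rule yields $-\Delta_p u=\mu|u|^{q-2}u$; testing against $u$ gives $\mu=\|u\|_{\mathcal D}^p/\|u\|_q^q>0$, and rescaling by $t=\mu^{1/(q-p)}$ (which coincides with the Nehari factor $t_u$) produces the required solution of \eqref{eq:SimpleEq}. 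The points demanding care are the bookkeeping with the exponent $pq/(q-p)$ and the positivity of the multiplier $\mu$.
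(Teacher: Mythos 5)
Your proposal is correct, and it arrives at the same pivotal fact as the paper---that the ground-state level equals $\bigl(\tfrac1p-\tfrac1q\bigr)S_{p,q}^{pq/(q-p)}$---but by a somewhat different and more self-contained route. The paper proves this identity by a two-sided estimate: for the upper bound it invokes the Hua--Li attainment result to produce a minimizer $v$ of $S_{p,q}$, scales it onto $\mathcal{N}$ via \cref{lem: UniquenessOft_u_G}, and computes $\Phi(t_vv)$; for the lower bound it uses the Sobolev inequality $\|u\|_q\leq \|u\|_{\mathcal{D}^{1,p}(\mathbb{Z}^N)}/S_{p,q}$ together with the minimax characterization $b=\inf_{u\neq0}\max_{s>0}\Phi(su)$, evaluated on the sphere $\|u\|=S_{p,q}^{q/(q-p)}$. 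Your single scale-invariant formula $\Phi(t_uu)=\bigl(\tfrac1p-\tfrac1q\bigr)\bigl(\|u\|_{\mathcal{D}^{1,p}(\mathbb{Z}^N)}/\|u\|_q\bigr)^{pq/(q-p)}$ subsumes both halves in one computation and, notably, does not use the attainment of $S_{p,q}$ anywhere; it also delivers the equality characterization ($\Phi(t_uu)=c$ if and only if $u$ attains $S_{p,q}$) in the same stroke, which is exactly what the two implications require. The other difference lies at the end of the converse: since the paper defines a ground state solution as a minimizer of $\Phi$ over $\mathcal{N}$, once it knows $t_uu\in\mathcal{N}$ and $\Phi(t_uu)=b$ it is done, the upgrade to a genuine critical point being implicit in the Szulkin--Weth framework of \cref{prop: PSforPhiAndPsi}; your explicit Lagrange-multiplier argument, with multiplier $\mu=\|u\|_{\mathcal{D}^{1,p}(\mathbb{Z}^N)}^p/\|u\|_q^q>0$ and rescaling factor $t=\mu^{1/(q-p)}=t_u$, is a valid, elementary substitute for that machinery. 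Your exponent bookkeeping (both terms of $\Phi(t_uu)$ collapse to $A^{q/(q-p)}B^{-p/(q-p)}$ with $A=\|u\|_{\mathcal{D}^{1,p}(\mathbb{Z}^N)}^p$, $B=\|u\|_q^q$) and the positivity of $\mu$ both check out, so the proof is complete as written.
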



Next, we consider the multiplicity of solutions of \eqref{eq:MainEquation} on $\mathbb{Z}^N$. The analogous problem in the continuous case has been studied, for example, in \cite{Szulkin-Weth-paper}. For each $u\in E(\mathbb{Z}^N)$, denote by $\mathcal{O}(u)$ the orbit of $u$ under the action of $\mathbb{Z}^N$, that is, $\mathcal{O}(u):=\{u(\cdot-kT):k\in\mathbb{Z}^N\}$. Note that if $u_0$ is a solution of \eqref{eq:MainEquation}, then every element of $\mathcal{O}(u_0)$ is also a solution. We say that two solutions $u_1$ and $u_2$ of \eqref{eq:MainEquation} are geometrically distinct if their orbits $\mathcal{O}(u_1)$ and $\mathcal{O}(u_2)$ are disjoint.

\begin{theorem}\label{thm:InfiniteSolution}
     Suppose that $p\geq2$ and that $f(x,u)$ is odd in $u$ for every $x\in\mathbb{Z}^N$. Then, under the same assumptions as in \cref{thm: 1}, equation \eqref{eq:MainEquation} admits infinitely many geometrically distinct solutions.
\end{theorem}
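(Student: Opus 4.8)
The plan is to adapt the Nehari-manifold multiplicity scheme of Szulkin and Weth \cite{Szulkin-Weth-paper} to the lattice setting. Writing $\Phi(u)=\frac1p\|u\|^{p}-\sum_{x\in\mathbb{Z}^N}F(x,u(x))$ for the energy functional and $\mathcal{N}$ for its Nehari manifold, the construction already used for \cref{thm: 1} provides a homeomorphism $m\colon S\to\mathcal{N}$ from the unit sphere $S=\{u\in E:\|u\|=1\}$ onto $\mathcal{N}$, with reduced functional $\Psi:=\Phi\circ m\in C^{1}(S)$ whose critical points correspond exactly to the nontrivial critical points of $\Phi$. Since $f$ is odd in $u$, $F$ is even in $t$, so $\Phi$ and $\Psi$ are even; since $V$ and $f$ are $T$-periodic, $\Phi$ and $\Psi$ are invariant under the isometric $\mathbb{Z}^N$-action $u\mapsto u(\cdot-Tk)$, $k\in\mathbb{Z}^N$. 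This action is free on $E\setminus\{0\}$: a nonzero function fixed by a nontrivial translation would be periodic and hence could not lie in $\ell^{q}(\mathbb{Z}^N)$. Thus the theorem reduces to producing infinitely many distinct $(\mathbb{Z}^N\times\mathbb{Z}_2)$-orbits of critical points of $\Psi$ on $S$.

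The heart of the proof — and the step I expect to be the main obstacle — is a discrete concentration–compactness (splitting) lemma describing the defect of compactness caused by the translation invariance. Concretely, I would show that every Palais–Smale sequence for $\Psi$ at a level $c$ is bounded (using \refcond{$S_4$} and \refcond{$S_5$}) and, after passing to a subsequence, decomposes into a finite sum of $\mathbb{Z}^N$-translates of nontrivial critical points of $\Phi$ plus a remainder vanishing in $\ell^{q}(\mathbb{Z}^N)$ for every $q>p^{*}$, with $c$ equal to the sum of the associated critical levels. The required ingredients are the discrete Sobolev inequality \eqref{eq:DiscreteSobolev} together with the $\ell^{q}$-nesting of \cref{lem:LargeMoreThenSmall}, a discrete Lions-type vanishing lemma, and — crucially — the elementary convexity inequality $\bigl(|a|^{p-2}a-|b|^{p-2}b\bigr)(a-b)\ge c_{p}\,|a-b|^{p}$, which holds precisely for $p\ge2$ and upgrades weak convergence of the discrete gradients to strong convergence, thereby identifying the weak limits as genuine critical points. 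It is exactly this $(S_+)$-type estimate that forces the restriction $p\ge2$.

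Granting the splitting lemma, I would argue by contradiction: suppose \eqref{eq:MainEquation} has only finitely many geometrically distinct solutions. Then $\Phi$ takes only finitely many critical values, all $\ge c_{0}:=\inf_{\mathcal{N}}\Phi>0$, and the splitting lemma yields a uniform separation $\|u-v\|\ge\kappa>0$ for critical points $u,v$ with $\mathcal{O}(u)\ne\mathcal{O}(v)$. Combined with the fact that each orbit is itself uniformly discrete (distinct translates of a fixed nonzero $\ell^{q}$ function are $\ell^{q}$-separated), every critical level set $\mathcal{K}_{c}\subset S$ is a finite union of uniformly discrete orbits and is therefore uniformly discrete, so its $\mathbb{Z}_2$-genus satisfies $\gamma(\mathcal{K}_{c})=1$. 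The same splitting lemma also supplies, on each strip where $\Psi$ has no critical value, a $\mathbb{Z}_2$-equivariant deformation, which is the substitute for the global Palais–Smale condition that fails here.

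Finally I would run the symmetric minimax. Setting $c_{k}:=\inf_{A\in\Sigma_{k}}\sup_{w\in A}\Psi(w)$, where $\Sigma_{k}$ is the family of compact symmetric subsets of $S$ of genus at least $k$, each $c_{k}$ is finite because $S\cap X_{k}$ is a compact symmetric set of genus $k$ for any $k$-dimensional subspace $X_{k}$ and $\Psi$ is continuous. The standard genus deformation argument, now legitimate by virtue of the equivariant deformation lemma, shows that every $c_{k}$ is a critical value and that $c_{k}=c_{k+1}$ would force $\gamma(\mathcal{K}_{c_{k}})\ge2$. Since we have just shown $\gamma(\mathcal{K}_{c})=1$ at every critical level, the values $c_{1}<c_{2}<\cdots$ are strictly increasing, producing infinitely many distinct critical values and contradicting their finiteness. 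Hence \eqref{eq:MainEquation} admits infinitely many geometrically distinct solutions.
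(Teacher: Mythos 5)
Your overall strategy---reduce to the even functional $\Psi$ on $S$, assume finitely many geometrically distinct solutions, and run a genus-based argument to force infinitely many critical values---is the same as the paper's. But your key technical tool is not: the paper never proves a splitting/profile-decomposition lemma. Its substitute for compactness is the strictly weaker \emph{discreteness of Palais--Smale sequences} (\cref{lem: DiscretenessOfPS-sequences}): two PS sequences in $\Psi^d$ either satisfy $\|u_n-v_n\|\to0$ or stay $\rho(d)$-apart. This is proved by comparing the two sequences directly through the $p\geq2$ convexity inequality (\cref{lem: pInequality}); no bubble is ever subtracted from a sequence. That distinction matters, and it is where your first gap sits: the iteration step of a splitting lemma requires $\Phi'\bigl(u_n-\bar{u}(\cdot-k_nT)\bigr)\to0$ in $E^*$, and for $p\neq2$ the map $u\mapsto\Phi'(u)$ is \emph{not additive}, since the derivative of $\frac1p\|\cdot\|^p$ involves the nonlinear expression $|\nabla_{xy}u|^{p-2}\nabla_{xy}u$. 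The $(S_+)$-type inequality you invoke upgrades weak to strong convergence, but it does not repair this non-additivity; one needs a separate Brezis--Lieb type estimate in the dual norm (a finite box where pointwise convergence works, plus a tail handled by H\"older with exponent $p/(p-2)$). This can be carried out for $p\geq2$ on $\mathbb{Z}^N$, but it is real missing work, and it is exactly the work the paper's two-sequence comparison is designed to avoid.

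The second and more serious gap is your deformation step. You assert that the splitting lemma "supplies, on each strip where $\Psi$ has no critical value, a $\mathbb{Z}_2$-equivariant deformation." As stated this is false: by your own splitting lemma, PS sequences exist at every \emph{tower level} (a finite sum of critical values, e.g.\ $2b$ with $b$ the ground-state level), and such a level need not be a critical value. Hence a strip free of critical values can still contain levels where $\inf\|\Psi'\|=0$, and the standard quantitative deformation lemma does not apply there. The cure---which is the actual heart of the paper's Section 5---is to show that each individual pseudo-gradient flow line converges to a genuine critical point, never to a "tower" (\cref{lem: DeformLemma1}); this uses precisely the PS-dichotomy: if a trajectory failed to converge, one could extract from it two PS sequences whose distance is trapped strictly between $0$ and $\rho(d)$, a contradiction. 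So your plan can be completed, but only after deriving this dichotomy (or the flow-line convergence) from your splitting lemma and inserting it both at the level $c_k$ (to deform $\Psi^{c_k+\epsilon}\setminus U_\delta(K_{c_k})$ into $\Psi^{c_k-\epsilon}$) and on critical-value-free strips; as written, the deformation claim is the unproved crux. By contrast, the remaining discrepancy---your classical compact-genus minimax $c_k=\inf_{A\in\Sigma_k}\sup_A\Psi$ versus the paper's $c_k=\inf\{d:\gamma(\Psi^d)\geq k\}$---is essentially cosmetic once a correct deformation is in hand.
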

\begin{remark}
    To the best of our knowledge, the case of $p>2$ in the continuous setting, under the assumption $\inf_{x}V(x)>0$, has not been explicitly established in the literature. It is noteworthy that the proof technique for \cref{lem: DiscretenessOfPS-sequences} can be adapted to the continuous case. Subsequently, by following the arguments presented in \cite{Szulkin-Weth-paper}, one can establish the corresponding multiplicity result for $p>2$ in the continuous setting as well.
\end{remark}

The preceding results can be extended to the setting of Cayley graphs. Let $X(\Gamma, S)$ be the Cayley graph of a discrete group ${\Gamma}$ with a finite symmetric generating set $S$. For any $r \in \mathbb{N}$, we define $V(r)$ as the number of group elements with word length at most $r$. It is well known that if $V(r) \geq C r^D$, $\forall r \geq 1$ for $D \geq 3$, then the Sobolev inequality
\begin{equation}\label{eq: SobolevOnCayley}
\|u\|_{\ell^q(\Gamma)} \leq C_{p,q} \|u\|_{\mathcal{D}^{1,p}(\Gamma)}
\end{equation}
holds, where $1 \leq p < D$, $q \geq \frac{Dp}{D - p}$. In fact, this follows from \cite[Theorem 3.6]{MR3397320} and the isoperimetric estimate in \cite[Theorem 4.18]{woess2000random}. Note that the action of $\mathrm{Aut}(\Gamma)$ on $\Gamma$ is transitive, where $\mathrm{Aut}(\Gamma)$ is the set of graph automorphisms of $\Gamma$.
By an analogous argument, we can prove the following result.

\begin{theorem}\label{thm:Cayley}
Let $X(\Gamma, S)$ be a Cayley graph with a finite generating set $S$, and suppose its volume growth satisfies $V(r) \geq C r^D$ for all $r \geq 1$ and some $D \geq 2$. Let $H$ be a subgroup of $\mathrm{Aut}(\Gamma)$ whose action on $\Gamma$ has finitely many orbits, and let $|x|_S$ denote the word length of $x \in \Gamma$ with respect to $S$. Given $1<p<D$,
\begin{enumerate}
    \item[\rm(1)] If we replace the Euclidean distance $|x|$ with word length $|x|_S$, then the conclusion of \cref{thm: 2} holds on $X(\Gamma, S)$.
    \item[\rm(2)] Furthermore, If the $T$-periodicity in condition {\rm \refcond{$S_1$}} is replaced by $H$-invariance of $V$ and $f$, then the conclusions of \cref{thm: 1} and \cref{thm:InfiniteSolution} also hold on $X(\Gamma, S)$.
\end{enumerate}
\end{theorem}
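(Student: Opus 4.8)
The plan is to show that each of \cref{thm: 1}, \cref{thm: 2} and \cref{thm:InfiniteSolution} rests on exactly two structural features of $\mathbb{Z}^N$---the Sobolev embedding and a cocompact group of symmetries---and that both are available on $X(\Gamma,S)$, so the proofs transcribe with only notational changes. The embedding is furnished by \eqref{eq: SobolevOnCayley}: under $V(r)\geq Cr^D$ it gives $\mathcal{D}^{1,p}(\Gamma)\hookrightarrow\ell^q(\Gamma)$ for all $q\geq\frac{Dp}{D-p}$ (using \cref{lem:LargeMoreThenSmall} to reach exponents above the critical one), and one simply reads $p^\ast:=\frac{Dp}{D-p}$ wherever $\frac{Np}{N-p}$ appeared, so that \refcond{$S_2$}--\refcond{$S_5$} keep their meaning. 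The symmetry feature, used in part (2), is the subgroup $H\leq\mathrm{Aut}(\Gamma)$: its acting with finitely many orbits is the exact analogue of $T$-periodicity, under which the symmetry group $(T\mathbb{Z})^N$ acts on $\mathbb{Z}^N$ with $T^N$ orbits, while $H$-invariance of $V$ and $f$ replaces \refcond{$S_1$}.

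First I would isolate the facts that hold on any connected, locally finite graph and so require no reproof: bounded sequences in $E$ have weakly convergent subsequences (as $1<p<\infty$ makes $E$ reflexive); weak convergence in $E$ forces pointwise convergence, since the Sobolev embedding into $\ell^\infty$ makes each evaluation $u\mapsto u(x)$ a bounded functional; the Br\'ezis--Lieb splitting of the nonlinear term is pointwise, hence graph-independent; and the vanishing alternative is elementary on a discrete graph, because $\|u\|_{\ell^q}^q\leq\|u\|_{\ell^\infty}^{q-p^\ast}\|u\|_{\ell^{p^\ast}}^{p^\ast}$ shows that $\|u_n\|_{\ell^\infty}\to0$ already yields $\|u_n\|_{\ell^q}\to0$ for every $q>p^\ast$. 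With these in hand, the Nehari-manifold construction---the manifold $\mathcal{N}$, the positive lower bound of $\Phi|_{\mathcal{N}}$, and the existence of a minimizing sequence---carries over verbatim, as it uses only \refcond{$S_2$}--\refcond{$S_5$} and \eqref{eq: SobolevOnCayley}.

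The substance lies in recovering compactness, where the two parts differ. For part (1), I would first note that the smallness condition (ii), via H\"older with exponents $\frac{r}{r-p},\frac{r}{p}$ and the best constant $S_{p,r}$ taken over $\Gamma$, gives $\sum_x V_-|u|^p\leq\|V_-\|_{\ell^{r/(r-p)}}S_{p,r}^{-p}\|u\|_{\mathcal{D}^{1,p}}^p$, so that $\|\cdot\|$ is equivalent to the $\mathcal{D}^{1,p}$-norm and $\Phi$ is coercive; then, as for \cref{thm: 2}, hypothesis (i) with $V(x)\to V_\infty=\sup V$ as $|x|_S\to\infty$ permits comparison with the constant-potential problem at infinity and excludes escape of mass, the only change being that $|x|_S$ replaces $|x|$. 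For part (2) the functional is $H$-invariant, so a minimizing or Palais--Smale sequence whose $\ell^\infty$-mass stays bounded away from $0$ (guaranteed by the vanishing alternative) admits, for each $n$, a vertex $x_n$ with $|u_n(x_n)|\geq\delta$; since $H$ has finitely many orbits one may pick $h_n\in H$ sending $x_n$ to one of finitely many orbit representatives, and $H$-invariance makes the translates $h_n^{-1}\!\cdot u_n$ a sequence at the same level whose weak limit is nonzero at a fixed vertex, hence a nontrivial solution of \eqref{eq:MainEquation}. The positive/negative dichotomy under $F(x,t)\leq F(x,|t|)$ is again pointwise and unchanged.

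Finally, the multiplicity assertion of part (2) follows the Szulkin--Weth scheme of \cref{thm:InfiniteSolution}, now with the orbit $\mathcal{O}(u)$ formed under $H$; the abstract even-functional theory on $\mathcal{N}$ applies for $p\geq2$, where the Nehari parametrization is a homeomorphism of the unit sphere of $E$ onto $\mathcal{N}$. The one genuinely delicate point, and the principal obstacle, is the analogue of \cref{lem: DiscretenessOfPS-sequences}: it must assert that Palais--Smale sequences at a fixed level split into finitely many $H$-translated bumps separated by a uniform energy gap. Its proof on $\mathbb{Z}^N$ exploits the uniform local geometry and a definite energy quantum carried by each bump, and transcribing it will require both the cocompactness of $H$ and the uniform volume lower bound $V(r)\geq Cr^D$ to forbid accumulation of bumps and to fix that quantum. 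Once this discreteness is secured, infinitely many geometrically distinct solutions follow exactly as before.
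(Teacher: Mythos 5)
Your proposal is correct and takes essentially the same approach as the paper: the volume-growth Sobolev inequality \eqref{eq: SobolevOnCayley} substitutes for the $\mathbb{Z}^N$ embedding, and the finitely many $H$-orbits give a finite fundamental domain into which the concentration points $x_n$ are moved by automorphisms $h_n$, so that $H$-invariance of $V$ and $f$ preserves the minimizing (or Palais--Smale) property and forces a nonzero weak limit. The paper's proof is exactly this argument, with the same fix applied wherever translational invariance was used, including in the analogue of \cref{lem: DiscretenessOfPS-sequences}.
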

\begin{remark}
    It follows from \cref{thm:Cayley} that the $T$-periodicity in \refcond{$S_1$} can be generalized to $\vec{T}$-periodicity for some vector $\vec{T}=(T_1,\cdots,T_n)\in\mathbb{Z}^N_{+}$, i.e., $V(x)$ and $f(x,u)$ are $T_i$-periodic in $x_i$ for every $1\leq i\leq N$.
\end{remark}

This paper is organized as follows. In \cref{sec:2}, we recall the setting of function spaces on graphs and then introduce the variational framework based on the Nehari manifold. The proofs of \cref{thm: 1}, \cref{thm: 2} and \cref{thm:InfiniteSolution} are given in \cref{sec:3}, \cref{sec:4} and \cref{sec:5} respectively. The special case $f(x,u)=|u|^{q-2}u$ is considered in \cref{sec:4} and a brief proof of \cref{thm:Cayley} is given at the end of \cref{sec:5}. Finally, a detailed proof of \cref{lem: DiscretenessOfPS-sequences} is provided in the Appendix.


\section{Notations and Preliminaries}\label{sec:2}

\subsection{Notations}
We begin by defining several function spaces on $\mathbb{Z}^N$. Let $q\geq1$. For a function $u\in C(\mathbb{Z}^N)$, it's $\ell^q$-norm is defined by
$$
\|u\|_{\ell^q(\mathbb{Z}^N)} := 
\begin{cases}
\left(\sum_{x\in\mathbb{Z}^N} |u(x)|^{q}\right)^{1/q}, & 0 < q < \infty, \\
\sup_{x\in\mathbb{Z}^N} |u(x)|, & q = \infty.
\end{cases}
$$
The associated $\ell^q(\mathbb{Z}^N)$ space is
$$
\ell^q(\mathbb{Z}^N) := \left\{ u \in C(\mathbb{Z}^N) : \|u\|_{\ell^q(\mathbb{Z}^N)} < \infty \right\}.
$$
In this paper, we will write $\|u\|_q$ instead of $\|u\|_{\ell^q(\mathbb{Z}^N)}$ when no confusion arises.

The following lemma is well-known, see \cite[Lemma 2.1]{Huang-Li-Yin}.
\begin{lemma}\label{lem:LargeMoreThenSmall}
Suppose that $u \in \ell^q(\mathbb{Z}^N)$ for some $1\leq q\leq\infty$, then
$$
\|u\|_r \leq \|u\|_q, \quad \forall r \geq q.
$$
\end{lemma}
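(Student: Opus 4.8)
The plan is to exploit the fact that, with respect to the counting measure on $\mathbb{Z}^N$, normalizing a function by its $\ell^q$-norm forces each of its values into the unit interval, on which the map $t\mapsto t^r$ is nonincreasing in the exponent $r$. This is the standard nesting property of $\ell^q$-spaces over a discrete measure space, so I expect a short and direct argument.

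First I would dispose of the degenerate cases. If $u=0$ the asserted inequality is an equality, so assume $u\neq 0$. If $q=\infty$, then $r\geq q$ forces $r=\infty$ and the inequality is again an equality; hence we may assume $1\leq q<\infty$, and it suffices to treat finite $r\geq q$ together with the endpoint $r=\infty$.

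The heart of the argument is a normalization. Set $v:=u/\|u\|_q$, so that $\|v\|_q=1$, i.e. $\sum_{x\in\mathbb{Z}^N}|v(x)|^q=1$. In particular $|v(x)|\leq 1$ for every $x\in\mathbb{Z}^N$, since the single term $|v(x)|^q$ cannot exceed the sum $\sum_{y}|v(y)|^q=1$ of nonnegative reals. For finite $r\geq q$ and any $x$ with $0\leq |v(x)|\leq 1$, we then have the pointwise bound $|v(x)|^r\leq |v(x)|^q$. Summing over $x$ yields $\|v\|_r^r=\sum_x|v(x)|^r\leq\sum_x|v(x)|^q=1$, whence $\|v\|_r\leq 1$; multiplying through by $\|u\|_q$ recovers $\|u\|_r\leq\|u\|_q$. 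For the endpoint $r=\infty$, the same bound $|v(x)|\leq 1$ gives $\|v\|_\infty=\sup_x|v(x)|\leq 1$, and rescaling again produces $\|u\|_\infty\leq\|u\|_q$.

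I do not anticipate any substantial obstacle here. The only points requiring mild care are the bookkeeping of the infinite exponents $q=\infty$ and $r=\infty$ as separate degenerate cases, and the correct invocation of the monotonicity $t^r\leq t^q$ for $t\in[0,1]$ and $r\geq q$. It is worth emphasizing that this last inequality is exactly what reverses on a space of finite total measure; its validity on $\mathbb{Z}^N$ with counting measure is precisely why the $\ell^q$-norms are ordered in this direction in the discrete setting.
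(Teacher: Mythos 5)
Your proof is correct. The paper itself gives no proof of this lemma---it simply cites it as well-known from Huang--Li--Yin---and your normalization argument (setting $v:=u/\|u\|_q$ so that $\|v\|_q=1$, hence $|v(x)|\leq 1$ pointwise and $|v(x)|^r\leq |v(x)|^q$ for $r\geq q$, then summing and rescaling) is precisely the standard proof of that cited result, with the degenerate cases $u=0$, $q=\infty$, and $r=\infty$ all handled correctly.
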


For any function $u \in C(\mathbb{Z}^N)$ and any adjacent vertices $x \sim y$, we define the difference operator
$$
\nabla_{xy}u := u(y) - u(x).
$$
The $\|\cdot\|$-norm introduced in \cref{sec:1} is then given by
$$
\|u\|= \left(\sum_{\substack{x,y\in\mathbb{Z}^N\\x\sim y}}\frac{1}{2}|\nabla_{xy}u|^p+\sum_{x\in\mathbb{Z}^N}V(x)|u(x)|^p\right)^{1/p}.
$$
Recall that the space $E$ is defined as the completion of $C_c(\mathbb{Z}^N)$ with respect to the norm $\|\cdot\|$. When $V(x)\equiv0$, $E$ coincides with the homogeneous Sobolev space $\mathcal{D}^{1,p}(\mathbb{Z}^N)$. In general, we have the continuous embedding $E\hookrightarrow\mathcal{D}^{1,p}(\mathbb{Z}^N)$. Therefore, by the discrete Sobolev inequality \eqref{eq:DiscreteSobolev}, $E$ can be continuously embedded 
into $\ell^q(\mathbb{Z}^N)$ for all $q \geq p^*$.

The energy functional associated with equation \eqref{eq:MainEquation} is defined by
\begin{equation}\label{eq: ExpressionOfPhi}
\begin{aligned}
    \Phi(u)&:=\frac{1}{p}\left(\sum_{\substack{x,y\in\mathbb{Z}^N\\x\sim y}}\frac{1}{2}|\nabla_{xy}u|^p+\sum_{x\in\mathbb{Z}^N}V(x)|u(x)|^p\right)-\sum_{x\in\mathbb{Z}^N}F(x,u(x))\\
    &=\frac{1}{p}\Vert u \Vert^p-\sum_{x\in\mathbb{Z}^N}F(x,u(x))
\end{aligned}
\end{equation}
The growth conditions on $f$, combined with the discrete Sobolev inequality, ensure that the functional $\Phi$ is well-defined on $E$. Moreover, one can show that $\Phi(\cdot)\in C^1(E,\mathbb{R})$ with the derivative $\Phi':E\to E^*$ given by
\begin{equation}\label{eq: DerivativeOfPhi}
    \langle \Phi'(u),v\rangle=\sum_{\substack{x,y\in\mathbb{Z}^N\\x\sim y}}\frac{1}{2}|\nabla_{xy}u|^{p-2}(\nabla_{xy}u)(\nabla_{xy}v)+\sum_{x\in\mathbb{Z}^N}V(x)|u|^{p-2}uv-\sum_{x\in\mathbb{Z}^N}f(x,u)v
\end{equation}
for all $v\in E$, where $\langle\cdot,\cdot\rangle$ denotes the duality pairing between $E^*$ and $E$. In particular, we have
\begin{equation}\label{eq: ExpressionOf<Phi'u,u>}
    \langle \Phi'(u),u\rangle=\|u\|^p-\sum_{x\in\mathbb{Z}^N}f(x,u)u(x)
\end{equation}

\subsection{Nehari manifolds}
In this subsection, we introduce the variational framework of Nehari manifolds. 
\begin{definition}
    The Nehari manifold $\mathcal{N}$ associated with the functional $\Phi$ is defined by
    $$
    \mathcal{N}:=\{u\in E\setminus\{0\}: \ \langle\Phi'(u),u\rangle=0\}.
    $$
    A function $u\in\mathcal{N}$ is called a ground state solution of equation \eqref{eq:MainEquation} if it satisfies 
    $$
    \Phi(u)=\inf_{v\in\mathcal{N}}\Phi(v).
    $$
\end{definition}
Note that from the definition of the Nehari manifold $\mathcal{N}$, it follows immediately that  for all $u\in\mathcal{N}$,
\begin{equation}\label{eq: PhiForNehariManifold}
    \Phi(u)=\Phi(u)-\frac{1}{p}\langle\Phi'(u),u\rangle=\frac{1}{p}\sum_{x\in\mathbb{Z}^N}f(x,u)u-\sum_{x\in\mathbb{Z}^N}F(x,u).
\end{equation}
\begin{lemma}\label{lem: UniquenessOft_u_G}
    Suppose that the conditions {\rm\refcond{$S_3$}$\sim$\refcond{$S_5$}} hold. Then
    \begin{enumerate}
        \item[\rm (1)] For all $u\in E$ and $t>0$, we have
    \begin{equation}\label{eq:Phi(u)AndPhi(tu)_G}
        \Phi(u)\geq \Phi(tu)+\frac{1-t^p}{p}\langle\Phi'(u),u\rangle.
    \end{equation}
    The equality holds if and only if  $(1-t)u\equiv0$.

    \item[\rm (2)] For every $ u \in E \setminus \{0\} $, there exists a unique $ t_u > 0 $ such that $ t_u u \in \mathcal{N} $. The function $ \psi_u(t) := \Phi(tu) $, defined for $ t > 0 $, attains its global maximum at $ t = t_u $. Moreover, $\psi_u(t)$ is strictly increasing on $ (0, t_u) $ and strictly decreasing on $ (t_u, +\infty) $.
    \end{enumerate}

\end{lemma}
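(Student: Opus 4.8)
The plan is to prove Part (1) as a pointwise convexity-type estimate, and then use it, together with a fibering-map analysis of $\psi_u(t)=\Phi(tu)$, to obtain the existence, uniqueness and monotonicity in Part (2).

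For Part (1), I would substitute the formula \eqref{eq: ExpressionOfPhi} for $\Phi$ and the expression for $\langle\Phi'(u),u\rangle$ into the difference $\Phi(u)-\Phi(tu)-\frac{1-t^p}{p}\langle\Phi'(u),u\rangle$ and observe that every term carrying the factor $\|u\|^p$ cancels identically. This reduces the assertion to the pointwise inequality
$$
F(x,ts)-F(x,s)+\frac{1-t^p}{p}f(x,s)s\ \geq\ 0
$$
for each $x$ and each $s=u(x)$, with equality iff $(1-t)s=0$. Fixing $x,s$ and letting $g(t)$ denote the left-hand side, I would note $g(1)=0$ and compute $g'(t)=s\bigl[f(x,ts)-t^{p-1}f(x,s)\bigr]$. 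Condition \refcond{$S_4$}, that $\tau\mapsto f(x,\tau)/|\tau|^{p-1}$ is strictly increasing on each of $(-\infty,0)$ and $(0,\infty)$, then forces $g'(t)<0$ on $(0,1)$ and $g'(t)>0$ on $(1,\infty)$ whenever $s\neq0$, while $g\equiv0$ when $s=0$. Hence $g$ has a strict minimum value $0$ at $t=1$, which gives both the inequality and the equality case; summing over $x$ finishes Part (1).

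For the existence in Part (2), I would study $\psi_u(t)=\Phi(tu)$ for $u\neq0$. From \refcond{$S_3$}, for any $\varepsilon>0$ one has $F(x,s)\leq\frac{\varepsilon}{p^*}|s|^{p^*}$ for $|s|$ small; since $u\in E\hookrightarrow\ell^{p^*}\cap\ell^\infty$, for $t$ small enough $t\|u\|_\infty$ lies in this regime and $\psi_u(t)\geq\frac{t^p}{p}\|u\|^p-\frac{\varepsilon t^{p^*}}{p^*}\|u\|_{p^*}^{p^*}$, which is positive for small $t>0$ because $p<p^*$. For the behavior at infinity I would first deduce from \refcond{$S_3$} and \refcond{$S_4$} that $f(x,s)\,s>0$ for $s\neq0$, hence $F(x,s)\geq0$ everywhere; then, choosing any $x_0$ with $u(x_0)\neq0$ and discarding the remaining nonnegative terms in $\sum_x F(x,tu(x))$, condition \refcond{$S_5$} gives $F(x_0,tu(x_0))\geq t^p|u(x_0)|^p M(t)$ with $M(t)\to\infty$, so that $\psi_u(t)\to-\infty$. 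By continuity $\psi_u$ attains a global maximum at some $t_u>0$, and since $\psi_u'(t)=\tfrac1t\langle\Phi'(tu),tu\rangle$, criticality at $t_u$ is exactly the statement $t_u u\in\mathcal{N}$.

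For uniqueness and the monotonicity structure, set $h(t):=\langle\Phi'(tu),tu\rangle=t^p\|u\|^p-\sum_x f(x,tu)tu$, so $\psi_u'(t)$ has the sign of $h(t)$. Writing $h(t)/t^p=\|u\|^p-\sum_x\phi_x(t)$ with $\phi_x(t)=f(x,tu(x))tu(x)/t^p$, a sign-by-sign analysis using \refcond{$S_4$} shows each $\phi_x$ is strictly increasing in $t$, whence $h(t)/t^p$ is strictly decreasing; since $h(t_u)=0$, this forces $h>0$ on $(0,t_u)$ and $h<0$ on $(t_u,\infty)$, giving both the strict monotonicity of $\psi_u$ on the two intervals and the uniqueness of $t_u$. (Uniqueness also follows directly from Part (1): any $v=t_1u\in\mathcal{N}$ satisfies $\Phi(v)>\Phi(sv)$ for all $s\neq1$, so $t_1$ is a \emph{strict} global maximizer of $\psi_u$ and hence unique.) I expect the main obstacle to be the sign bookkeeping forced by \refcond{$S_4$}: the monotonicity of $f/|\cdot|^{p-1}$ must be transferred to $g$ and to $\phi_x$, and because $|ts|^{p-1}=t^{p-1}|s|^{p-1}$ interacts oppositely with the increasing behavior of $f/|\cdot|^{p-1}$ on the two half-lines, the cases $u(x)>0$ and $u(x)<0$ have to be treated separately rather than by a single uniform estimate.
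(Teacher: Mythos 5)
Your proof is correct, and for Part (1) and the existence half of Part (2) it coincides with the paper's own argument: the same cancellation of the $\|u\|^p$-terms, the same pointwise function $g$ controlled by \refcond{$S_4$}, and the same asymptotics of $\psi_u$ near $0$ and $\infty$ deduced from \refcond{$S_3$} and \refcond{$S_5$}. (Your sign bookkeeping is in fact more careful than the paper's: the paper's displayed derivative $t^{p-1}|u|^p\bigl(\tfrac{f(x,tu)}{|tu|^{p-1}}-\tfrac{f(x,u)}{|u|^{p-1}}\bigr)$ is missing a factor $\mathrm{sign}(u(x))$, whereas your identity $g'(t)=s[f(x,ts)-t^{p-1}f(x,s)]$ is exact; the paper's stated sign conclusion, and hence its proof, is unaffected.) Where you genuinely differ is the uniqueness/monotonicity step of Part (2). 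The paper argues via the equality case of Part (1): if $t_0u,\,t_uu\in\mathcal{N}$, both are global maximizers of $\psi_u$, hence $\Phi(t_0u)=\Phi(t_uu)$, and the equality case forces $t_0=t_u$; the strict monotonicity of $\psi_u$ on $(0,t_u)$ and $(t_u,\infty)$ is left implicit (it follows since $t_u$ is the only zero of $\psi_u'$, but the paper does not spell this out). You instead show directly that $h(t)/t^p=\|u\|^p-\sum_x\phi_x(t)$ is strictly decreasing, each $\phi_x$ being nondecreasing in $t$ (strictly increasing where $u(x)\neq0$) by the same sign-by-sign use of \refcond{$S_4$}; since $h(t_u)=0$, this yields the sign of $\psi_u'$ on both sides of $t_u$ at once, so uniqueness and the monotonicity assertion of the lemma come out simultaneously and explicitly. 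Your route is the standard fibering-map analysis and is slightly more informative, since it proves every clause of the statement directly; the paper's route is shorter once Part (1) is in hand, and as you record it as an alternative in parentheses, nothing is missing from your write-up.
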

\begin{proof}
(1). Let $u\in E$, a direct computation shows that
    $$
    \begin{aligned}
        \Phi(u)&=\frac{1}{p}\|u\|^p-\sum_{x\in\mathbb{Z}^N} F(x,u)\\
        &=\Phi(tu)+\frac{1-t^p}{p}\langle\Phi'(u),u\rangle+\sum_{x\in\mathbb{Z}^N} \left(\frac{1-t^p}{p}f(x,u)u+F(x,tu)-F(x,u)\right).
    \end{aligned}
    $$
    Differentiating the expression inside the sum with respect to $t$ yields
    $$
    \frac{d}{dt}\left(\frac{1-t^p}{p}f(x,u)u+F(x,tu)-F(x,u)\right)=t^{p-1}|u|^p\left(\frac{f(x,tu)}{|tu|^{p-1}}-\frac{f(x,u)}{|u|^{p-1}}\right).
    $$
    By \refcond{$S_4$}, the function $s\mapsto f(x,s)/|s|^{p-1}$ is strictly increasing on $(-\infty,0)\cup(0,\infty)$. Consequently, the derivative above is negative on $(0,1)$ and positive on $(1,\infty)$. This implies that for each fixed $x\in\mathbb{Z}^N$, the inequality
    \begin{equation}\label{eq: pF_Less_fu}
        \frac{1-t^p}{p}f(x,u)u+F(x,tu)-F(x,u)\geq0
    \end{equation}
    holds for all $t>0$ and the equality is attained if and only if $(1-t)u(x)=0$. Summing over $x$, we conclude that the inequality \eqref{eq:Phi(u)AndPhi(tu)_G} holds and the equality is attained if and only if $(1-t)u\equiv 0$.
    
(2). Fix $u\in E\backslash\{0\}$. Since
    $$
    \frac{d}{dt}\psi_u(t)=\frac{d}{dt}\Phi(tu)=\langle \Phi'(tu),u\rangle=\frac{1}{t}\langle \Phi'(tu),tu\rangle,
    $$
    the function $tu\in\mathcal{N}$ if and only if $t$ is a critical point of $\psi_u$. By conditions \refcond{$S_3$} and \refcond{$S_5$}, we have $\psi_u(0)=0$, $\psi_u(t)>0$ for sufficiently small $t$ and $\psi_u<0$ for sufficiently large $t$. It follows that there exists $t_u$ such that $\psi_u(t)$ attains its maximum at $t_u$, and therefore $t_uu\in\mathcal{N}$. 
    
    Now we only need to prove that $t_u$ is the unique critical point. Suppose that $t_0u\in\mathcal{N}$ for some $t_0>0$. Since $t_0u\in\mathcal{N}$, after applying \eqref{eq:Phi(u)AndPhi(tu)_G} with $u$ replaced by $t_0u$ and $t_u u$ respectively, we obtain that $\Phi(t_0u)\geq \Phi(tu)$ and $\Phi(t_uu)\geq \Phi(tu)$ for all $t>0$. This implies that $\Phi(t_0u)=\Phi(t_uu)$. It then follows from the equality in part (1) that $t_0=t_u$.
   
\end{proof}

We now present a general result concerning the Nehari manifold. A detailed proof can be found in \cite[Chapter 3]{Szulkin2010method}. For the remainder of this subsection, we assume that $E$ is a uniformly convex real Banach space, $S$ denotes the unit sphere of $E$, and $\Phi \in C^1(E, \mathbb{R})$ satisfies $\Phi(0) = 0$. A sequence $\{u_n\}_n \subset E$ is called a Palais-Smale sequence of $\Phi$ if $\{\Phi(u_n)\}_n$ is bounded and $\Phi'(u_n)\to0$ as $n\to\infty$.
A function $\varphi \in C(\mathbb{R}_+, \mathbb{R}_+)$ is said to be a {normalization function} if $\varphi(0) = 0$, $\varphi$ is strictly increasing and $\varphi(t) \to \infty$ as $t \to \infty$. We need the following further assumptions:

\begin{enumerate}
    \item[\condition{$A_1$}] There exists a normalization function $\varphi$ such that
    $$
    u \mapsto \psi(u) := \int_0^{\|u\|} \varphi(t) \, dt \in C^1(E \setminus \{0\}, \mathbb{R}),
    $$
    $J := \psi'$ is bounded on bounded sets and $J(w)w = 1$ for all $w \in S$.
    
    \item[\condition{$A_2$}] For each $w \in E \setminus \{0\}$ there exists $s_w>0$ such that if $\alpha_w(s) := \Phi(sw)$, then $\alpha_w'(s) > 0$ for $0 < s < s_w$ and $\alpha_w'(s) < 0$ for $s > s_w$.
    
    \item[\condition{$A_3$}] There exists $\delta > 0$ such that $s_w \geq \delta$ for all $w \in S$ and for each compact subset $W \subset S$ there exists a constant $C_W$ such that $s_w \leq C_W$ for all $w \in W$.
\end{enumerate}

\begin{remark}\label{rmk:A1}
    In our setting $E=E(\mathbb{V})$, we take the normalization function $\varphi(t) := t^{p-1}$. The associated functional is then given by $\psi(u) = \frac{1}{p} \|u\|^p$, and its derivative $J = \psi' : E \to E^*$ is given by 
    \begin{equation*}
    \langle J(u),v\rangle = \sum_{\substack{x,y\in\mathbb{Z}^N\\x\sim y}}\frac{1}{2}|\nabla_{xy}u|^{p-2}(\nabla_{xy}u)(\nabla_{xy}v)+\sum_{x\in\mathbb{Z}^N}V(x)|u(x)|^{p-2}u(x)v(x), \ \forall v\in E.
    \end{equation*}
    In particular, $\langle J(u),u\rangle=\|u\|^p=1$ for all $u\in S$. By the H$\ddot{\rm o}$lder inequality, we have $\langle J(u),v\rangle\leq\|u\|^{p-1}\|v\|$, which implies that $J(u)$ is bounded on bounded sets. Therefore, {\rm\refcond{$A_1$}} is satisfied.
\end{remark}

It follows from {\rm\refcond{$A_1$}} that $S$ is a $C^1$-submanifold of $E$ and the tangent space of $S$ at $w$ is given by
$$
T_w(S) = \{z \in E : \langle J(w),z\rangle = 0\}.
$$
In this general setting, the Nehari manifold is defined as
$$
\mathcal{N} := \{u \in E \setminus \{0\} : \langle \Phi'(u),u\rangle = 0\}.
$$
Assumption {\rm\refcond{$A_2$}} implies that $sw \in \mathcal{N}$ if and only if $s = s_w$, while the first part of assumption {\rm\refcond{$A_3$}} ensures that $\mathcal{N}$ is closed in $E$ and bounded away from $0$. 

Define the mappings $\hat{m} : E \setminus \{0\} \to \mathcal{N}$ and $m : S \to \mathcal{N}$ by
\begin{equation}
\label{eq:mappings}
\hat{m}(w) := s_w w \quad \text{and} \quad m := \hat{m}|_S.
\end{equation}

\begin{proposition}\label{prop: mIsHomeomorphism}
    Suppose that $\Phi$ satisfies {\rm\refcond{$A_2$}} and {\rm\refcond{$A_3$}}. Then
    
    {\rm(a)}. The mapping $\hat{m}$ is continuous.
    
    {\rm(b)}. The mapping $m$ is a homeomorphism between $S$ and $\mathcal{N}$ with its inverse given by $m^{-1}(u) = {u}/{\|u\|}$.
\end{proposition}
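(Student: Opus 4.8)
The plan is to reduce everything to the single nontrivial point—the continuity of $m$—after disposing of the bijectivity and the scaling structure by hand. First I would record that $s_w$ depends only on the ray through $w$: since $\alpha_{tw}(s)=\Phi(stw)=\alpha_w(ts)$ for $t>0$, assumption \refcond{$A_2$} forces $s_{tw}=s_w/t$, and therefore $\hat m(tw)=s_{tw}(tw)=s_w w=\hat m(w)$. Thus $\hat m$ is constant along rays and factors as $\hat m=m\circ\pi$, where $\pi(w):=w/\|w\|$ is the radial projection of $E\setminus\{0\}$ onto $S$, which is continuous. This reduces part (a) to the continuity of $m$ once (b) is in hand.

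Next I would establish that $m$ is a bijection with the claimed inverse. For $u\in\mathcal{N}$ put $w:=u/\|u\|\in S$; using $\langle\Phi'(sw),sw\rangle=s\,\alpha_w'(s)$, the membership $\langle\Phi'(u),u\rangle=0$ together with the uniqueness of the critical point in \refcond{$A_2$} gives $\|u\|=s_w$, so $u=s_w w=m(w)$. This shows $m$ is surjective and that any preimage of $u$ must be $u/\|u\|$; injectivity follows by taking norms in $s_{w_1}w_1=s_{w_2}w_2$. Hence $m$ is a bijection with $m^{-1}(u)=u/\|u\|$, and $m^{-1}$ is continuous as the restriction of $\pi$ (recall $\mathcal{N}$ is bounded away from $0$, so $\pi$ behaves well there).

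The heart of the proof is the continuity of $w\mapsto s_w$ on $S$, from which $m(w)=s_w w$ is continuous. Given $w_n\to w$ in $S$, the set $W:=\{w_n:n\}\cup\{w\}$ is compact, so \refcond{$A_3$} yields $\delta\leq s_{w_n}\leq C_W$ for all $n$; in particular $\{s_{w_n}\}$ is bounded. I would then argue by the subsequence principle: from any subsequence extract a further one with $s_{w_{n_k}}\to s_0\in[\delta,C_W]$, so that $m(w_{n_k})=s_{w_{n_k}}w_{n_k}\to s_0 w$. Since each $m(w_{n_k})\in\mathcal{N}$ and $\mathcal{N}$ is closed, the limit $s_0 w$ lies in $\mathcal{N}$; as $s_0\geq\delta>0$ and $w\in S$, the uniqueness in \refcond{$A_2$} forces $s_0=s_w$. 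Because every subsequence admits a further subsequence converging to the \emph{same} value $s_w$, the full sequence satisfies $s_{w_n}\to s_w$, which proves continuity of $m$. Assembling the pieces, $m$ is a continuous bijection with continuous inverse, hence a homeomorphism (part (b)), and $\hat m=m\circ\pi$ is continuous (part (a)).

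The main obstacle is precisely this continuity step, and it is where the full strength of \refcond{$A_3$} is consumed: the upper bound $C_W$ gives boundedness of $\{s_{w_n}\}$, the lower bound $\delta$ keeps the limit away from $0\notin\mathcal{N}$, the closedness of $\mathcal{N}$ traps the limit inside the manifold, and the uniqueness of \refcond{$A_2$} pins down the limit as $s_w$. The subsequence argument is the standard device for promoting ``every limit point equals $s_w$'' to genuine convergence without presupposing continuity of $w\mapsto s_w$.
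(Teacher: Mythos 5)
Your proof is correct and takes essentially the same route as the paper's source for this proposition: the paper does not prove it itself but defers to \cite[Chapter 3]{Szulkin2010method}, and the argument there is precisely yours --- reduce $\hat m$ to $m\circ\pi$ by ray invariance, identify the inverse as $u\mapsto u/\|u\|$, and obtain continuity of $w\mapsto s_w$ via the subsequence argument combining the bounds of \refcond{$A_3$} on the compact set $\{w_n\}\cup\{w\}$, the closedness of $\mathcal{N}$, and the uniqueness coming from \refcond{$A_2$}. Your appeal to the closedness of $\mathcal{N}$ is legitimate, since the paper records it (as a consequence of \refcond{$A_2$} and the first part of \refcond{$A_3$}) immediately before the proposition.
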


We now define the functionals $\hat{\Psi} : E \setminus \{0\} \to \mathbb{R}$ and $\Psi : S \to \mathbb{R}$ by
\begin{equation}
\label{eq:Psi_def}
\hat{\Psi}(w) := \Phi(\hat{m}(w)) \quad \text{and} \quad \Psi := \hat{\Psi}|_S.
\end{equation}
Note that although $\mathcal{N}$ is not necessarily a $C^1$-manifold, the functional $\hat{\Psi}$ is of class $C^1$ and there is a one-to-one correspondence between the critical points of $\Psi$ and the nontrivial critical points of $\Phi$. More precisely, we have the following result.

\begin{proposition}\label{prop: PSforPhiAndPsi}
    Suppose that $E$ is a Banach space satisfying {\rm\refcond{$A_1$}} and that $\Phi$ satisfies {\rm\refcond{$A_2$}} and {\rm\refcond{$A_3$}}. Then
\begin{enumerate}
    \item[\rm (a)] $\Psi \in C^1(S, \mathbb{R})$ and its derivative is given by
    $$
    \langle\Psi'(w),z\rangle = \|m(w)\| \langle\Phi'(m(w)), z\rangle \quad \text{for all } z \in T_w(S).
    $$
    \item[\rm (b)] If $\{w_n\}_n$ is a Palais-Smale sequence for $\Psi$, then $\{m(w_n)\}_n$ is a Palais-Smale sequence for $\Phi$. Conversely, if $\{u_n\}_n \subset \mathcal{N}$ is a bounded Palais-Smale sequence for $\Phi$, then $\{m^{-1}(u_n)\}_n$ is a Palais-Smale sequence for $\Psi$.
    \item[\rm (c)] $w\in S$ is a critical point of $\Psi$ if and only if $u=m(w)$ is a nontrivial critical point of $\Phi$. Moreover, $\Psi(w)=\Phi(u)$ and 
    $$
    \inf_{w\in S} \Psi(w) = \inf_{u\in\mathcal{N}} \Phi(u).
    $$
\end{enumerate}
\end{proposition}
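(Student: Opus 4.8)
The plan is to prove part (a) first and then deduce (b) and (c) from it; part (a) is the only genuinely delicate step, while the rest is linear algebra together with the quantitative bounds in \refcond{$A_3$}. The reason (a) is the crux is that the scaling factor $s_w$ entering $\hat m(w)=s_w w$ is only known to be continuous (via \cref{prop: mIsHomeomorphism}), not differentiable, so one cannot differentiate $\Phi(s_w w)$ by the chain rule; the whole point is to compute $\hat\Psi'$ \emph{without} differentiating $s_w$.

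For (a), I would first show $\hat\Psi\in C^1(E\setminus\{0\},\mathbb R)$ with $\langle\hat\Psi'(w),z\rangle=s_w\langle\Phi'(s_w w),z\rangle$. Fix $w\in E\setminus\{0\}$ and $z\in E$, write $s=s_w$ and $s_t=s_{w+tz}$, and estimate the difference quotient of $\hat\Psi$. The device is to sandwich it between two expressions that sidestep the differentiation of $s_w$: since $\hat m(w)=sw$ maximizes $\tau\mapsto\Phi(\tau w)$ by \refcond{$A_2$}, one has $\Phi(sw)\geq\Phi(s_t w)$, which gives the upper bound
$$
\hat\Psi(w+tz)-\hat\Psi(w)\leq \Phi\big(s_t(w+tz)\big)-\Phi(s_t w),
$$
and symmetrically, since $\hat m(w+tz)=s_t(w+tz)$ maximizes along the ray through $w+tz$,
$$
\hat\Psi(w+tz)-\hat\Psi(w)\geq \Phi\big(s(w+tz)\big)-\Phi(sw).
$$
I would then apply the mean value theorem to each right-hand side, divide by $t$, and let $t\to0$: by continuity of $\hat m$ one has $s_t\to s$ and the intermediate points converge to $sw$, so both bounds converge to $s\langle\Phi'(sw),z\rangle$, yielding the Gateaux derivative (with the usual care for the sign of $t$). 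Continuity of $w\mapsto\hat\Psi'(w)$ then follows from continuity of $\Phi'$, of $\hat m$, and of $s_w$, upgrading this to $C^1$; restricting to $S$ and using $\|m(w)\|=s_w$ for $w\in S$ gives the formula in (a).

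For (c), I would use that $E=T_w(S)\oplus\mathbb R w$, which holds because $\langle J(w),w\rangle=1\neq0$ by \refcond{$A_1$}. If $w$ is a critical point of $\Psi$, then $\langle\Phi'(m(w)),z\rangle=0$ for all $z\in T_w(S)$ by (a) (as $\|m(w)\|>0$); together with $\langle\Phi'(m(w)),w\rangle=\|m(w)\|^{-1}\langle\Phi'(m(w)),m(w)\rangle=0$, valid since $m(w)\in\mathcal N$, the direct-sum decomposition forces $\Phi'(m(w))=0$, and $m(w)\neq0$. The converse is immediate from (a), and the identities $\Psi(w)=\Phi(m(w))$ and $\inf_S\Psi=\inf_{\mathcal N}\Phi$ follow because $m:S\to\mathcal N$ is a bijection by \cref{prop: mIsHomeomorphism}.

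For (b), I would pass between the dual norms using the same decomposition. Writing $v\in E$ as $v=z+\langle J(w),v\rangle w$ with $z\in T_w(S)$, the bound $\|z\|\leq(1+C)\|v\|$ (where $C:=\sup_{w\in S}\|J(w)\|_{E^*}<\infty$ by \refcond{$A_1$}), combined with $\langle\Phi'(m(w)),w\rangle=0$ and formula (a), gives
$$
\|\Phi'(m(w))\|_{E^*}\leq \frac{1+C}{\|m(w)\|}\,\|\Psi'(w)\|_{T_w(S)^*},\qquad \|\Psi'(w)\|_{T_w(S)^*}\leq \|m(w)\|\,\|\Phi'(m(w))\|_{E^*}.
$$
Since $\Phi(m(w))=\Psi(w)$, the energy values stay bounded along either type of sequence. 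For the first claim, $\|m(w_n)\|=s_{w_n}\geq\delta$ by \refcond{$A_3$} bounds the factor $1/\|m(w_n)\|$, so $\Phi'(m(w_n))\to0$; for the converse, the assumed boundedness $\|u_n\|\leq M$ bounds $\|m(w_n)\|$, so $\Psi'(w_n)\to0$. This closes the argument, with the sandwich estimate in (a) being the only step requiring real care.
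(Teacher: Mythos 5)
Your proof is correct. Note that the paper does not prove this proposition itself—it defers to the cited reference (Chapter 3 of \cite{Szulkin2010method})—and your argument is essentially the standard one given there: the two-sided maximality estimate combined with the mean value theorem to compute $\hat{\Psi}'$ without differentiating $w \mapsto s_w$, the decomposition $E = T_w(S) \oplus \mathbb{R}w$ (valid since $\langle J(w), w\rangle = 1$) together with $\langle \Phi'(m(w)), w\rangle = 0$ for part (c), and the dual-norm comparison using $s_w \geq \delta$ from \refcond{$A_3$} (resp.\ the assumed bound $\|u_n\| \leq M$) for the two directions of part (b).
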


\section{Proof of \cref{thm: 1}}\label{sec:3}
Throughout this section, conditions \refcond{$S_1$}$\sim$\refcond{$S_5$} are always assumed. We begin by establishing several preliminary results.
\begin{lemma}\label{lem: fu>pF}
    $f(x,s)s\geq pF(x,s)$ for all $x\in\mathbb{Z}^N$ and $s\in \mathbb{R}$.  
\end{lemma}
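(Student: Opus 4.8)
The plan is to derive the inequality directly from the monotonicity hypothesis \refcond{$S_4$}, which forces $F(x,\cdot)$ to grow at least like a $p$-homogeneous function along each ray. Since the assertion is pointwise in $x$, I would fix $x\in\mathbb{Z}^N$ and regard $s$ as a real variable. The case $s=0$ is immediate because $F(x,0)=0$ and $f(x,0)\cdot 0=0$, so the work is confined to $s\neq 0$.

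The quickest route is to reuse the pointwise inequality \eqref{eq: pF_Less_fu} already established in the proof of \cref{lem: UniquenessOft_u_G}, namely
\[
\frac{1-t^p}{p}\,f(x,s)s + F(x,ts) - F(x,s) \geq 0 \quad \text{for all } t>0,
\]
valid for every fixed $x$ and every real argument $s$. By \refcond{$S_2$} the map $f(x,\cdot)$ is continuous, hence so is $F(x,\cdot)$, and therefore $F(x,ts)\to F(x,0)=0$ and $t^p\to 0$ as $t\to 0^+$. Passing to the limit $t\to 0^+$ preserves the non-strict inequality and yields $\tfrac{1}{p}f(x,s)s - F(x,s)\geq 0$, which is exactly the claim.

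As a self-contained alternative I would argue through an integral estimate. For $s>0$ and $0<\sigma\leq s$, condition \refcond{$S_4$} gives $f(x,\sigma)/\sigma^{p-1}\leq f(x,s)/s^{p-1}$; integrating $f(x,\sigma)\leq \bigl(f(x,s)/s^{p-1}\bigr)\sigma^{p-1}$ over $(0,s)$ produces $F(x,s)\leq \tfrac{f(x,s)}{s^{p-1}}\cdot\tfrac{s^p}{p}=\tfrac{1}{p}f(x,s)s$. For $s<0$ one argues symmetrically on $(-\infty,0)$, where \refcond{$S_4$} gives $f(x,s)/|s|^{p-1}\leq f(x,\sigma)/|\sigma|^{p-1}$ for $s\leq\sigma<0$; integrating over $(s,0)$ and using $\int_s^0|\sigma|^{p-1}\,d\sigma=|s|^p/p$ together with $f(x,s)s=-f(x,s)|s|$ again delivers $F(x,s)\leq \tfrac{1}{p}f(x,s)s$.

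The only delicate point is the sign bookkeeping in the case $s<0$: one must apply \refcond{$S_4$} in the correct direction on the negative half-line and correctly track the sign of $f(x,s)s$. No growth, embedding, or compactness input is required, so I expect no genuine obstacle beyond this routine care; the limit argument in the second paragraph sidesteps even that, since it only invokes continuity of $F(x,\cdot)$ and the already-proven inequality.
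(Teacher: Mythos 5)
Both of your arguments are correct, and they relate to the paper's proof in different ways. The paper proves \cref{lem: fu>pF} by introducing $g(t):=\frac{t^p}{p}f(x,s)s-F(x,ts)$, using \refcond{$S_4$} to show that $g$ attains its global maximum at $t=1$, and then observing that $\frac{1}{p}f(x,s)s-F(x,s)=g(1)\geq\lim_{t\to 0^+}g(t)=0$. Since $g(1)-g(t)$ is exactly the left-hand side of \eqref{eq: pF_Less_fu}, your primary route --- cite \eqref{eq: pF_Less_fu} and let $t\to 0^+$ --- is the same proof with the derivative computation outsourced to \cref{lem: UniquenessOft_u_G}; the citation is legitimate because the derivation of \eqref{eq: pF_Less_fu} there is purely pointwise in $(x,u(x))$, hence valid for every real argument $s$, and indeed the paper itself reuses \eqref{eq: pF_Less_fu} in exactly this pointwise way in Step 3 of the proof of \cref{thm: 1}. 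Your integral-comparison argument, by contrast, is a genuinely different and more elementary route: it replaces the analysis of a parametrized auxiliary function by the single majorization $f(x,\sigma)\leq\frac{f(x,s)}{|s|^{p-1}}|\sigma|^{p-1}$ on $(0,s)$ (respectively the corresponding minorization on $(s,0)$ for $s<0$), followed by one integration; your sign bookkeeping for $s<0$ is handled correctly. A small bonus of either of your versions: the paper's displayed formula for $g'$ and its stated monotonicity ($g'<0$ on $(0,1)$, $g'>0$ on $(1,\infty)$) would make $t=1$ a \emph{minimum} of $g$, which contradicts the conclusion $g(1)\geq g(t)$ that it then draws; the correct signs are the opposite ones, so the paper's proof contains a harmless but real sign slip that both of your arguments avoid reproducing.
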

\begin{proof}
    Fix $x\in\mathbb{Z}^N$ and $s\in\mathbb{R}$. Define the function $g:(0,\infty)\to \mathbb{R}$ by 
    $$
    g(t):=\frac{t^p}{p}f(x,s)s-F(x,ts).
    $$
    Differentiating $g$ with respect to $t$ yields
    $$
    g'(t)=t^{p-1}|s|^p\left(\frac{f(x,ts)}{|ts|^{p-1}}-\frac{f(x,s)}{|s|^{p-1}}\right).
    $$
    By \refcond{$S_4$}, the function $s\mapsto f(x,s)/|s|^{p-1}$ is strictly increasing on $(-\infty,0)\cup(0,\infty)$. Consequently, $g'(t)<0$ for $t\in(0,1)$ and $g'(t)>0$ for $t\in(1,\infty)$. This implies that $g(1)\geq g(t)$ for all $t>0$. It follows that
    $$
    \frac{1}{p}f(x,s)s-F(x,s)=g(1)\geq \lim_{t\to 0^+} g(t)=0,
    $$
    which completes the proof.
\end{proof}

\begin{lemma}\label{lem: Phi_BoundedBelow}
    The functional $\Phi(\cdot)$ has a positive lower bound on $\mathcal{N}$, i.e.,
    $$
    b:=\inf_{u\in\mathcal{N}}\Phi(u)>0.
    $$
    In particular, it follows that 
    $$
    \inf\limits_{u\in\mathcal{N}}\|u\|\geq \sqrt[p]{pb}>0.
    $$
\end{lemma}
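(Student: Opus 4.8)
The plan is to combine the growth conditions on $f$ with the discrete Sobolev inequality to control the nonlinear part of $\Phi$, and then to exploit the comparison inequality in \cref{lem: UniquenessOft_u_G}(1) to transfer a lower bound from a single small sphere to all of $\mathcal{N}$. The guiding idea is that on $\mathcal{N}$ the functional dominates $\Phi(tu)$ for every rescaling $tu$, and a judicious choice of $t$ makes the resulting estimate independent of the direction of $u$.

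First I would record a pointwise upper bound on the primitive $F$. From \refcond{$S_3$}, for every $\varepsilon>0$ there is $\delta>0$ with $|f(x,t)|\le\varepsilon|t|^{p^*-1}$ whenever $|t|\le\delta$, which upon integration gives $F(x,t)\le\frac{\varepsilon}{p^*}|t|^{p^*}$ there; for $|t|\ge\delta$ the subcritical growth \refcond{$S_2$} (recall $q>p^*$) yields $F(x,t)\le C_\varepsilon|t|^{q}$. Hence, for each $\varepsilon>0$ there is $C_\varepsilon>0$ such that
$$
F(x,t)\le\varepsilon|t|^{p^*}+C_\varepsilon|t|^{q},\qquad\forall\,(x,t)\in\mathbb{Z}^N\times\mathbb{R}.
$$

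Next, fix $u\in\mathcal{N}$. Since $\langle\Phi'(u),u\rangle=0$, inequality \eqref{eq:Phi(u)AndPhi(tu)_G} reduces to $\Phi(u)\ge\Phi(tu)$ for every $t>0$. Using \eqref{eq: ExpressionOfPhi}, the estimate above, and the discrete Sobolev inequality \eqref{eq:DiscreteSobolev} applied with the exponents $p^*$ and $q$ (together with $E\hookrightarrow\mathcal{D}^{1,p}$), I would bound, writing $s:=t\|u\|$,
$$
\Phi(tu)=\frac{t^{p}}{p}\|u\|^{p}-\sum_{x\in\mathbb{Z}^N}F(x,tu)\ge\frac{s^{p}}{p}-\varepsilon C_1 s^{p^*}-C_\varepsilon C_2 s^{q}=:h(s),
$$
where $C_1,C_2>0$ are the relevant Sobolev constants raised to the appropriate powers. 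The crucial feature is that the right-hand side depends on $u$ only through $s=t\|u\|$, so it is completely decoupled from the direction $u/\|u\|$. Since $p<p^*<q$, one has $h(s)>0$ for all sufficiently small $s>0$; fixing $\varepsilon=1$ and selecting one such $s_0>0$ defines $c:=h(s_0)>0$. For an arbitrary $u\in\mathcal{N}$ I then choose $t=s_0/\|u\|>0$, obtaining $\Phi(u)\ge\Phi(tu)\ge h(s_0)=c$, and therefore $b=\inf_{\mathcal{N}}\Phi\ge c>0$.

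For the ``in particular'' statement, I would first note that \refcond{$S_3$} forces $f(x,t)/|t|^{p-1}\to0$ as $t\to0$, while \refcond{$S_4$} makes this ratio strictly increasing on each of $(-\infty,0)$ and $(0,\infty)$; hence $f(x,t)$ shares the sign of $t$, which gives $F\ge0$. Consequently $\Phi(u)\le\frac{1}{p}\|u\|^{p}$ on all of $E$, and combining this with $\Phi(u)\ge b$ for $u\in\mathcal{N}$ yields $\|u\|\ge\sqrt[p]{pb}$. The main obstacle here is not any single computation but obtaining a bound that is \emph{uniform} over the whole manifold: a direct estimate of $\|u\|^{p}=\sum_x f(x,u)u$ only shows $\inf_{\mathcal{N}}\|u\|>0$ and does not by itself produce a positive lower bound for $\Phi$. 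It is precisely the rescaling device above—evaluating $\Phi$ along the ray $tu$ at the fixed level $t\|u\|=s_0$, which is licensed by \cref{lem: UniquenessOft_u_G}(1)—that converts the local positivity of $h$ near the origin into the desired uniform bound $b>0$.
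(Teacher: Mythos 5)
Your proof is correct and takes essentially the same approach as the paper: both combine \refcond{$S_2$}--\refcond{$S_3$} with the discrete Sobolev inequality to get a lower bound for $\Phi(tu)$ depending only on $s=t\|u\|$ that is positive for small $s$, then use \cref{lem: UniquenessOft_u_G}(1) (with $\langle\Phi'(u),u\rangle=0$ on $\mathcal{N}$) to rescale any $u\in\mathcal{N}$ to that fixed small level, and finally use $F\geq0$ to deduce $\|u\|^p\geq p\Phi(u)\geq pb$. Your justification of $F\geq0$ (via \refcond{$S_3$} and \refcond{$S_4$} together) is in fact slightly more careful than the paper's bare citation of \refcond{$S_4$}, but the argument is otherwise identical.
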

\begin{proof}
    By \cref{lem: UniquenessOft_u_G}, conditions \refcond{$S_2$}$\sim$\refcond{$S_3$} and the discrete Sobolev inequality \eqref{eq:DiscreteSobolev}, we obtain that for all $t>0$ and $u\in E$,
    \begin{equation}\label{eq:PositiveInf}
    \begin{aligned}
        \Phi(u)\geq \Phi(tu)&=\frac{t^p}{p}\|u\|^p-\sum_{x\in\mathbb{Z}^N} F(x,tu(x))\\
        &\geq \frac{t^p}{p}\|u\|^p-\sum_{x\in\mathbb{Z}^N} C( t^{p^*}|u(x)|^{p^*}+ t^{q}|u(x)|^{q})\\
        &\geq \frac{t^p}{p}\|u\|^p-C'(t^{p^*}\|u\|^{p^*}+t^{q}\|u\|^{q}),
    \end{aligned}
    \end{equation}
    where $C,C'>0$ are constants. Since $p^*,q>p$, it follows that $b_r:=\inf_{\|u\|=r}\Phi(u)>0$ for sufficiently small $r>0$. Moreover, by \cref{lem: UniquenessOft_u_G}, for all $u\in\mathcal{N}$ we have
    $$
    \Phi(u)\geq \Phi(\frac{ru}{\|u\|})\geq b_r>0,
    $$
    which implies that $b\geq b_r>0$.
    In particular, since $F(x,t)\geq0$ by \refcond{$S_4$},
    $$
    \inf_{u\in\mathcal{N}}\|u\|\geq\inf_{u\in\mathcal{N}} \sqrt[p]{p\Phi(u)}\geq\sqrt[p]{pb}>0.
    $$
\end{proof}

\begin{proposition}\label{prop: PhiCoercive}
    The functional $\Phi$ is coercive on $\mathcal{N}$, i.e., $\Phi(u_n)\to\infty$ as $n\to \infty$ whenever $\{u_n\}_n\subset\mathcal{N}$ and $\|u_n\|\to\infty$ as $n\to \infty$.
\end{proposition}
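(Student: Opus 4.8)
The plan is to argue by contradiction. Suppose the conclusion fails; then, after passing to a subsequence, there is a sequence $\{u_n\}_n\subset\mathcal{N}$ with $\|u_n\|\to\infty$ but $\Phi(u_n)\leq d$ for some constant $d$. Normalize by setting $v_n:=u_n/\|u_n\|$, so that $\|v_n\|=1$ and, by the discrete Sobolev inequality \eqref{eq:DiscreteSobolev}, the sequence $\{v_n\}_n$ is bounded in $\ell^{p^*}(\mathbb{Z}^N)$. Since each $v_n$ lies in $\ell^{p^*}$ its supremum is attained; the argument then splits according to whether $\|v_n\|_\infty\to 0$ (vanishing) or $\|v_n\|_\infty\geq\alpha>0$ along a subsequence (non-vanishing).

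First I would treat the vanishing case $\|v_n\|_\infty\to 0$. Because $u_n\in\mathcal{N}$, part (2) of \cref{lem: UniquenessOft_u_G} gives $\Phi(u_n)\geq\Phi(sv_n)$ for every $s>0$ (apply the scaling with $t=s/\|u_n\|$), hence $d\geq\frac{s^p}{p}-\sum_{x}F(x,sv_n)$. Using the growth bound $F(x,t)\leq\frac{\epsilon}{p^*}|t|^{p^*}+C_\epsilon|t|^q$ coming from \refcond{$S_2$} and \refcond{$S_3$} (the same estimate already used in \eqref{eq:PositiveInf}), together with the interpolation $\|v_n\|_q^q\leq\|v_n\|_\infty^{q-p^*}\|v_n\|_{p^*}^{p^*}\to 0$ for $q>p^*$, I would pass to the limit: first fix $\epsilon$, let $n\to\infty$ (so the $\ell^q$-term drops and the $\ell^{p^*}$-term stays bounded by the Sobolev constant), then let $\epsilon\to0$ to obtain $d\geq\frac{s^p}{p}$ for every $s>0$, and finally let $s\to\infty$, which is absurd.

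For the non-vanishing case I would exploit the $T$-periodicity in \refcond{$S_1$}. Pick $x_n$ with $|v_n(x_n)|\geq\alpha$; since translation by $kT$, $k\in\mathbb{Z}^N$, preserves both $\|\cdot\|$ and $\Phi$, I translate $x_n$ into the finite fundamental domain $\{0,\dots,T-1\}^N$, producing $\tilde v_n:=v_n(\cdot-k_nT)$ with $\tilde v_n(y_0)\geq\alpha$ for a fixed $y_0$ after a further subsequence. As $\{\tilde v_n\}_n$ is bounded in $E$, up to a subsequence it converges pointwise to some $v$ with $v(y_0)\geq\alpha>0$, so $v\neq0$. Writing $\tilde u_n:=\|u_n\|\tilde v_n$, we get $|\tilde u_n(y_0)|=\|u_n\|\,|\tilde v_n(y_0)|\to\infty$, so \refcond{$S_5$} applies: for every $M>0$ and $n$ large, $F(y_0,\tilde u_n(y_0))\geq M|\tilde u_n(y_0)|^p$. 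Since $F\geq0$ (by \refcond{$S_4$}) and $\sum_x F(x,\tilde u_n)=\sum_x F(x,u_n)$ by periodicity, this yields $\sum_x F(x,u_n)/\|u_n\|^p\geq M|\tilde v_n(y_0)|^p\to M|v(y_0)|^p$; as $M$ is arbitrary, $\sum_x F(x,u_n)/\|u_n\|^p\to\infty$. Recalling $\Phi(u_n)/\|u_n\|^p=\frac{1}{p}-\sum_x F(x,u_n)/\|u_n\|^p$, this forces $\Phi(u_n)\to-\infty$, contradicting $\Phi(u_n)\geq b>0$ from \cref{lem: Phi_BoundedBelow}.

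I expect the main obstacle to be the non-vanishing step: correctly using the periodic invariance to relocate the concentration point into a fixed finite region, extracting a nonzero pointwise limit from the normalized sequence, and then feeding the blow-up $|\tilde u_n(y_0)|\to\infty$ into \refcond{$S_5$}. The nonnegativity $F\geq0$ is precisely what lets me discard all lattice sites other than $y_0$ and localize the estimate to a single point. The vanishing case, by comparison, is a routine consequence of the growth bounds and the Sobolev embedding.
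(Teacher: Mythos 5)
Your proposal is correct and takes essentially the same route as the paper's own proof: argue by contradiction, normalize $v_n=u_n/\|u_n\|$, split into vanishing and non-vanishing cases, use the Nehari maximality $\Phi(u_n)\geq\Phi(sv_n)$ from \cref{lem: UniquenessOft_u_G} together with the growth bounds from \refcond{$S_2$}--\refcond{$S_3$} in the vanishing case, and use periodic translation into a fundamental domain plus \refcond{$S_5$} in the non-vanishing case. The only cosmetic differences are that the paper phrases the dichotomy via $\|v_n\|_q$ rather than $\|v_n\|_\infty$ (equivalent by interpolation and \cref{lem:LargeMoreThenSmall}) and fixes explicit constants $R$ and $\epsilon$ instead of your successive limits in $n$, $\epsilon$, and $s$.
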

\begin{proof}
    Suppose, for contradiction, that there exists a sequence $\{u_n\}_{n}\subset\mathcal{N}$ and a constant $M>0$ such that $\|u_n\|\to\infty$ as $n\to \infty$ and $\sup_{n\in\mathbb{N}}|\Phi(u_n)|\leq M$. Define the normalizing sequence $v_n:=\frac{u_n}{\|u_n\|}$.
    
    \textbf{Case 1.} $\|v_n\|_q\to 0$ as $n\to\infty$. Fix $R > \sqrt[p]{2pM}$ and choose $\epsilon = \frac{M}{(RS_{p,p^*})^{p^*}}> 0$. By \refcond{$S_2$} and \refcond{$S_3$}, there exists a constant $C_{\epsilon}>0$ such that for all $x\in\mathbb{Z}^N$ and $t\in\mathbb{R}$,
    $$
    |F(x,t)|\leq \epsilon |t|^{p^*}+ C_{\epsilon} |t|^{q}.
    $$
    It then follows from the assumption $\|v_n\|_q\to 0$ and the discrete Sobolev inequality \eqref{eq:DiscreteSobolev} that
    \begin{equation*}
        \varlimsup_{n\to\infty}\sum_{x\in\mathbb{Z}^N} F(x,Rv_n)\leq  \epsilon R^{p^*}\varlimsup_{n\to\infty}\|v_n\|_{p^*}^{p^*}+C_{\epsilon}R^{q}\varlimsup_{n\to\infty}\|v_n\|_{q}^{q}\leq \epsilon R^{p^*}S_{p,p^*}^{p^*}=M.
    \end{equation*}
    Therefore, by \cref{lem: UniquenessOft_u_G} we have
    $$
    \begin{aligned}
         M\geq\varlimsup_{n\to\infty}\Phi(u_n)\geq \varlimsup_{n\to\infty}\Phi(Rv_n)
         =&\frac{R^p}{p}-\varliminf_{n\to\infty}\sum_{x\in\mathbb{Z}^N} F(x,Rv_n)\\
         \geq&\frac{R^p}{p}-M>M,
    \end{aligned}
    $$
    which is a contradiction.

    \textbf{Case 2.} $\varlimsup\limits_{n\to\infty}\|{v}_n\|_q>0$. Since $\|v_n\|=1$ for all $n\in\mathbb{N}$, it follows from the discrete Sobolev inequality that
    $$
        0<\varlimsup_{n\to\infty}\|{v}_n\|_q^q\leq\varlimsup_{n\to\infty}\|{v}_n\|_{\infty}^{q-p^*}\|{v}_n\|_{p^*}^{p^*}\leq S_{p,p^*}^{p^*}\varlimsup_{n\to\infty}\|{v}_n\|_{\infty}^{q-p^*},
    $$
    which implies that $\delta:=\varlimsup\limits_{n\to\infty}\|{v}_n\|_{\infty}>0$. Passing to a subsequence, we may assume that there exists a sequence $\{x_n\}_n\subset\mathbb{Z}^N$ such that $|{v}_n(x_n)|\geq\delta/2$ for all $n\in \mathbb{N}$. Let $\Omega:=[0,T)^N\cap\mathbb{Z}^N$. Then there exists $\{k_n\}_n\subset\mathbb{Z}^N$ such that $x_n-k_nT\in\Omega$ for all $n\in\mathbb{N}$. Define the translated sequences
    $$
    \tilde{u}_n(x):=u_n(x-k_nT),\ \tilde{v}_n(x):=\frac{\tilde{u}_n(x)}{\|\tilde{u}_n\|}=v_n(x-k_nT).
    $$
    Passing to a further subsequence, we may assume that there exists $x_0\in\Omega$ such that $\tilde{v}_n(x_0)\to \delta'$ with $\delta'\geq \frac{\delta}{2|\Omega|}$.
    Hence
    $$
    \tilde{u}_n(x_0)=\|\tilde{u}_n\|\tilde{v}_n(x_0)\to\infty.
    $$ 
    By \refcond{$S_5$} and the $T$-periodicity of functions $V(x)$ and $f(x,u)$, we obtain
    $$
    \begin{aligned}
    0 \leq \frac{\Phi(u_n)}{\|u_n\|^p}
    = \frac{\Phi(\tilde{u}_n)}{\|\tilde{u}_n\|^p}
    &= \frac{1}{p}
       - \sum_{x\in\mathbb{Z}^N}
         \frac{F(x, \tilde{u}_n)}{|\tilde{u}_n(x)|^p} |\tilde{v}_n(x)|^p \\
    &\leq \frac{1}{p}
       - \frac{F(x_0, \tilde{u}_n)}{|\tilde{u}_n(x_0)|^p} |\tilde{v}_n(x_0)|^p
       \to -\infty, \quad n \to \infty,
    \end{aligned}
    $$
    which is again a contradiction. 
\end{proof}

We now begin the proof.
\begin{proof}[Proof of \cref{thm: 1}]
    By \cref{lem: Phi_BoundedBelow}, we have $b:=\inf_{u\in\mathcal{N}}\Phi(u)>0$. A sequence $\{u_n\}_{n}\subset\mathcal{N}$ is called a minimizing sequence for $\Phi|_{\mathcal{N}}$ if $\Phi(u_n)\to b$ as $n\to\infty$. The proof is divided into three steps.

    \textbf{Step 1.} There exists a minimizing sequence $\{u_n\}_{n}\subset\mathcal{N}$ for $\Phi|_{\mathcal{N}}$ which is also a Palais-Smale sequence for $\Phi$. 
    
    It is clear that $v_n:=\frac{u_n}{\|u_n\|}$ is a minimizing sequence for $\Psi$ defined in \eqref{eq:Psi_def}. By Ekeland’s variational principle \cite{Ekeland1974variational}, we may choose $u_n$ so that $\Psi'(v_n)\to 0$. Since $u_n=m(v_n)$, by \cref{prop: PSforPhiAndPsi}, it suffices to verify the assumptions {\rm\refcond{$A_1$}}$\sim${\rm\refcond{$A_3$}} for $E$ and $\Phi$. We have already verified {\rm\refcond{$A_1$}} in \cref{rmk:A1}, and {\rm\refcond{$A_2$}} can be deduced directly from \cref{lem: UniquenessOft_u_G}. Hence, it remains to verify {\rm\refcond{$A_3$}}.
    
    We first show that $\mathcal{N}$ is bounded away from $0$. Indeed, suppose that there exists a sequence $\{u_n\}_n\subset\mathcal{N}$ such that $\|u_n\|\to 0$ as $n\to\infty$, then 
    $$
    \Phi(u_n)\leq \frac{1}{p}\|u_n\|^p\to 0,\ n\to\infty,
    $$
    which contradicts \cref{lem: Phi_BoundedBelow}. It then follows that $s_w=\|s_w w\|$ is bounded away from $0$ uniformly for all $w\in S$, since $s_ww\in\mathcal{N}$.
    
    To verify the second part of {\rm\refcond{$A_3$}}, it suffices to show that for each compact set $W\subset S$ there exists a constant $C_W$ such that $\alpha'_{w}(s)\leq0$ for all $w\in W$ and all $s\in[C_W,+\infty)$. Assume, to the contrary, that there exist sequences $\{w_n\}_n\subset W$ and $s_{n}\to\infty$ such that
    $$
    0<\alpha'_{w_n}(s_n)=\langle\Phi'(s_nw_n),w_n\rangle=s_n^p-\sum_{x\in\mathbb{Z}^N} f(x,s_nw_n)w_n\leq s_n^p-p\sum_{x\in\mathbb{Z}^N} F(x,s_nw_n),
    $$
    where the last inequality follows from \cref{lem: fu>pF}.
    Since $W$ is compact, we may assume that $w_n\to w$ in $S$ for some $w\in W$. Choose $x_0\in\mathbb{Z}^N$ such that $w(x_0)>0$. Then, by \refcond{$S_5$}, for all $M>0$ there exists a constant $n_M\in\mathbb{N}$ such that
    $$
    s_n^p-p\sum_{x\in\mathbb{Z}^N} F(x,s_nw_n)\leq s_n^p-pF(x_0,s_nw_n(x_0))\leq s_n^p-M(s_nw_n(x_0))^{p},\quad\forall n\geq n_M.
    $$
    Since $M>0$ is arbitrary, this is a contradiction.

    \textbf{Step 2.} $b=\inf_{u\in\mathcal{N}}\Phi(u)>0$ is attained by some $\bar{u}\in\mathcal{N}$.
    
    Let $\{u_n\}_{n}\subset\mathcal{N}$ be a minimizing sequence for $\Phi|_{\mathcal{N}}$. By \cref{prop: PhiCoercive}, $\{u_n\}_{n}$ is bounded in $ E$ and hence bounded in $\ell^q(\mathbb{Z}^N)$ for all $q\geq p^*$ by the discrete Sobolev inequality. Passing to a subsequence, we may assume that 
    $$
    \text{$u_n \rightharpoonup \bar{u}$ in $E$,\quad $u_n \rightharpoonup \bar{u}$ in $\ell^q(\mathbb{Z}^N)$ \quad and\quad
    $u_n(x) \to \bar{u}(x)$ for each $x\in\mathbb{Z}^N$. }
    $$
    Since $u_n\in\mathcal{N}$, it follows from conditions \refcond{$S_2$} and \refcond{$S_3$} that for each $\epsilon>0$ there exists a constant $C_\epsilon>0$ such that 
    $$
    \begin{aligned}
        b\leq \Phi(u_n)=\frac{1}{p}\|u_n\|^p-\sum_{x\in\mathbb{Z}^N} F(x,u_n)&=\frac{1}{p}\sum_{x\in\mathbb{Z}^N} f(x,u_n)u_n-\sum_{x\in\mathbb{Z}^N} F(x,u_n)\\
        &\leq \epsilon\|u_n\|_{p^*}^{p^*}+C_\epsilon\|u_n\|_{q}^{q}.
    \end{aligned}
    $$
    By the discrete Sobolev inequality, $\{u_n\}_n$ is also bounded in the space $\ell^{q^*}(\mathbb{Z}^N)$. Choose $\epsilon$ small enough so that $\epsilon\|u_n\|_{p^*}^{p^*}\leq\frac{b}{2}$ for all $n\in \mathbb{N}$. It then follows from the above inequality that $b\leq 2C_\epsilon\|u_n\|_{q}^{q}$. By the Sobolev inequality again, we have
    $$
    0<\frac{b}{2C_{\epsilon}}\leq \|u_n\|_{q}^{q} \leq \|u_n\|_{p^*}^{p^*} \|u_n\|_{\infty}^{q-p^*} \leq S_{p,p^*}^{p^*} \|u_n\|^{p^*} \|u_n\|_{\infty}^{q-p^*},
    $$
    which implies that $\|u_n\|_{\infty}$ is bounded away from $0$. Therefore, there exists $\{x_n\}\subset\mathbb{Z}^N$ and a constant $\delta>0$ such that $u_n(x_n)\geq \delta$ for all $n\in\mathbb{N}$. Let $\Omega:=[0,T)^N\cap\mathbb{Z}^N$. Then there exists $\{k_n\}_n\subset\mathbb{Z}^N$ such that $x_n-k_nT\in\Omega$ for all $n\in\mathbb{N}$. Define the translated sequence by $\bar{u}_n(x):=u_n(x+{k_n}T)$. Since both $V(x)$ and $f(x,u)$ are $T$-periodic, we have $\Phi(\bar{u}_n)=\Phi(u_n)$ for all $n\in\mathbb{N}$. Therefore, replacing $u_n(x)$ with $\bar{u}_n(x)$ if necessary, we may assume that the pointwise limit $\bar{u}\neq 0$.
    
    By \textbf{Step 1}, we may further assume that $\Phi'(u_n)\to 0$ as $n\to\infty$ . Since $u_n(x) \to \bar{u}(x)$ for each $x\in\mathbb{Z}^N$, it follows that for every $v\in C_c(\mathbb{Z}^N)$,
    $$
    \begin{aligned}
    &\langle\Phi'(\bar{u}),v\rangle\\
    =&\sum_{\substack{x,y\in\mathbb{Z}^N\\x\sim y}}\frac{1}{2}|\nabla_{xy}\bar{u}|^{p-2}(\nabla_{xy}\bar{u})(\nabla_{xy}v) +\sum_{x\in\mathbb{Z}^N}V(x)|\bar{u}|^{p-2}\bar{u}v  -\sum_{x\in\mathbb{Z}^N} f(x,\bar{u}){v} \\
    =&\lim_{n\to\infty} \left(\sum_{\substack{x,y\in\mathbb{Z}^N\\x\sim y}}\frac{1}{2}|\nabla_{xy}{u}_n|^{p-2}(\nabla_{xy}{u}_n)(\nabla_{xy}v) +\sum_{x\in\mathbb{Z}^N}V(x)|u_n|^{p-2}u_nv-\sum_{x\in\mathbb{Z}^N} f(x,{u}_n){v}\right)\\
    =&\lim_{n\to\infty}\langle\Phi'(u_n),v\rangle=0,
    \end{aligned}
    $$
    which implies that $\Phi'(\bar{u})=0$, and therefore $\bar{u}\in\mathcal{N}$.
    
    It remains to show that $\Phi(\bar{u})=b$. Since $\bar{u}\neq 0$, by \cref{lem: UniquenessOft_u_G} there exists $\bar{t}>0$ such that $\bar{t}\bar{u}\in\mathcal{N}$. 
    Then, by inequality \eqref{eq:Phi(u)AndPhi(tu)_G} and Fatou's lemma,
    
    \begin{equation*}
    \begin{aligned}
         b=\lim_{n\to\infty}\left(\Phi(u_n)-\frac{1}{p}\langle\Phi'(u_n),u_n\rangle\right)&=\lim_{n\to\infty}\frac{1}{p}\sum_{x\in\mathbb{Z}^N} \left(f(x,u_n)u_n-pF(x,u_n)\right)\\
        &\geq\frac{1}{p}\sum_{x\in\mathbb{Z}^N} \left(f(x,\bar{u})\bar{u}-pF(x,\bar{u})\right)\\
        &=\Phi(\bar{u})-\frac{1}{p}\langle\Phi'(\bar{u}),\bar{u}\rangle\\
        &=\Phi(\bar{u})\geq b.
    \end{aligned}
    \end{equation*}
    This implies that $\Phi(\bar{u})=b$, and $\bar{u}$ is a ground state solution of equation \eqref{eq:MainEquation} on $\mathbb{Z}^N$.
    
    \textbf{Step 3.} Every ground state solution $u$ of equation \eqref{eq:MainEquation} is either positive or negative if $F(x,t)\leq F(x,|t|)$ for all $(x,t)\in\mathbb{Z}^N\times\mathbb{R}$.

    We first show that $|u|$ is also a ground state solution of equation \eqref{eq:MainEquation}. By \cref{lem: UniquenessOft_u_G} there exists $t_0>0$ such that $t_0|u|\in\mathcal{N}$. Since $\Phi(u)=b\leq\Phi(t_0|u|)$, it follows from the expression \eqref{eq: PhiForNehariManifold} that
    \begin{equation}\label{eq: PositiveSolution}
        \begin{aligned}
        \frac{1}{p}\sum_{x\in\mathbb{Z}^N}f(x,u)u-\sum_{x\in\mathbb{Z}^N}F(x,u)=\Phi(u)\leq\Phi(t_0 |u|)&=\frac{t_0^p}{p}\||u|\|^p-\sum_{x\in\mathbb{Z}^N}F(x,t_0|u|)\\
    &\leq \frac{t_0^p}{p}\|u\|^p-\sum_{x\in\mathbb{Z}^N}F(x,t_0u)\\
    &=\frac{t_0^p}{p}\sum_{x\in\mathbb{Z}^N}f(x,u)u-\sum_{x\in\mathbb{Z}^N}F(x,t_0u),
    \end{aligned}
    \end{equation}
    Now fix $x\in\mathbb{Z}^N$ and consider the function 
    $$
    g(x,t):=\frac{t^p}{p}f(x,u(x))u(x)-F(x,tu(x)).
    $$
    By the inequality \eqref{eq: pF_Less_fu}, 
    $$
    g(x,1)-g(x,t)\geq 0,\quad\forall t>0,
    $$
    and the equality is attained if and only if $(1-t)u(x)=0$. It follows that all inequalities in \eqref{eq: PositiveSolution} are, in fact, equalities. Since $u\not\equiv0$, the equality condition implies that $t_0=1$. Hence $|u|=t_0|u|\in\mathcal{N}$ and
    $$
    \begin{aligned}
        \Phi(|u|)=\Phi(t_0|u|)=&\frac{t_0^p}{p}\sum_{x\in\mathbb{Z}^N}f(x,u)u-\sum_{x\in\mathbb{Z}^N}F(x,t_0u)\\
        =&\frac{1}{p}\sum_{x\in\mathbb{Z}^N}f(x,u)u-\sum_{x\in\mathbb{Z}^N}F(x,u)=\Phi(u)=b.
    \end{aligned}
    $$
    This shows that $|u|$ is a ground state solution of equation \eqref{eq:MainEquation}.

    Next, we show that $|u|$ is strictly positive. Suppose that $\inf_{x\in\mathbb{Z}^N}|u(x)|=0$, then there exists $x_0\in\mathbb{Z}^N$ such that $u(x_0)=0$. Since $|u|$ is a nonnegative ground state solution of equation \eqref{eq:MainEquation}, it follows that
    $$
    0\geq\sum_{y\sim x_0}|\nabla_{x_0y}|u||^{p-2}\nabla_{x_0y}|u|=\Delta_p|u|(x_0)=V(x_0)|u(x_0)|^{p-2}|u(x_0)|-f(x_0,|u(x_0)|)=0.
    $$
    This implies that $u(y)=u(x_0)=0$ for all $y\sim x_0$. As $\mathbb{Z}^N$ is connected, an inductive argument shows that $u(x)\equiv0$, which contradicts the fact that $u$ is a nontrivial solution. 
    
    Now suppose, for contradiction, that $u$ is sign-changing. Define the sets $A:=\{x:u(x)>0\}$ and $B:=\{x:u(x)<0\}$. Then both $A$ and $B$ are nonempty. Since $|u|$ is strictly positive, we also have $\mathbb{Z}^N=A\cup B$. Moreover, as $F(x,u)\leq F(x,|u|)$ and $\||u|\|\leq\|u\|$, it follows from $\Phi(|u|)=\Phi(u)=b$ that $\||u|\|=\|u\|$. This implies that there is no edge in $\mathbb{E}$ connecting the sets $A$ and $B$. However, since both $A$ and $B$ are nonempty and disjoint, the absence of such edges would imply that $\mathbb{Z}^N$ is disconnected, which is a contradiction. Therefore, $u$ must be either strictly positive or strictly negative.
 
\end{proof}



\section{Proof of \cref{thm: 2} and \cref{thm:|u|p}}\label{sec:4} 
\begin{proof}[Proof of \cref{thm: 2}]
Consider the limit equation 
$$
-\Delta_p u + V_{\infty}|u|^{p-2} u= f(u)
$$ 
and its associated energy functional
$$
\Phi_{\infty}(u) := \frac{1}{p} \left(\sum_{\substack{x,y\in\mathbb{Z}^N\\x\sim y}}\frac{1}{2} |\nabla_{xy} u|^p + \sum_{x\in\mathbb{Z}^N}V_{\infty} |u|^p \right) - \sum_{x\in\mathbb{Z}^N} F(u).
$$
By \cref{lem: UniquenessOft_u_G}, the ground state energy of $\Phi$ can be characterized as
$$
b = \inf_{u \in \mathcal{N}} \Phi(u) = \inf_{w \in E\setminus \{0\}} \max_{s > 0} \Phi(sw).
$$
We denote the corresponding quantity for $\Phi_{\infty}$ by
$$
b_{\infty} := \inf_{u \in \mathcal{N}_{\infty}} \Phi_{\infty}(u) = \inf_{w \in E \setminus \{0\}} \max_{s > 0} \Phi_{\infty}(sw).
$$
Since $V_{\infty} = \sup_{x \in \mathbb{Z}^N} V(x)$, it follows that $b_{\infty} \geq b$.
If $V(x) \equiv V_{\infty}$, then this reduces to the case of periodic potentials. Otherwise, $V(x) < V_{\infty}$ on some subset of $\mathbb{Z}^N$.
By \cref{thm: 1}, we know that $b_{\infty}$ is attained at some positive function $u_0 \in \mathcal{N}_{\infty}$, i.e., $\Phi_{\infty}(u_0) = b_{\infty}$.
Then, for any $s > 0$, we have
$$
b_{\infty} = \Phi_{\infty}(u_0) \geq \Phi_{\infty}(su_0) > \Phi(su_0),
$$
which implies that
$$
b_{\infty} > \max_{s > 0} \Phi(su_0) \geq \inf_{u \in E \setminus \{0\}} \max_{s > 0} \Phi(su) =b.
$$

We first show that the Sobolev type inequalities hold for the norm $\|\cdot\|$. Since 
$$
\|V_{-}\|_{\frac{r}{r-p}}< S_{p,r}^p,
$$
by the definition of $S_{p,\infty}$ we have
$$
\begin{aligned}
    \|u\|^p=\|u\|_{\mathcal{D}^{1,p}(\mathbb{Z}^N)}^p-\sum_{x\in\mathbb{Z}^N}V(x)|u(x)|^p
    \geq&\|u\|_{\mathcal{D}^{1,p}(\mathbb{Z}^N)}^p-\|V_{-}\|_{\frac{r}{r-p}}\|u\|_{r}^p\\
    \geq&\left(1-\frac{\|V_{-}\|_{\frac{r}{r-p}}}{S_{p,r}^p}\right)\|u\|_{\mathcal{D}^{1,p}(\mathbb{Z}^N)}^p\geq \epsilon^p\|u\|_{\mathcal{D}^{1,p}(\mathbb{Z}^N)}^p
\end{aligned}
$$
for some sufficiently small $\epsilon>0$. Then by the discrete Sobolev inequality \eqref{eq:DiscreteSobolev}, for all $q\geq p^*$,
$$
\|u\|_q\leq C_{p,q}\|u\|_{\mathcal{D}^{1,p}(\mathbb{Z}^N)}\leq \frac{C_{p,q}}{\epsilon}\|u\|,\quad\forall u\in E.
$$
It follows that all arguments in the proof of \cref{thm: 1} remain valid, except for those that rely on the $T$-periodicity of $V(x)$ and $f(x,u)$.

Recall that in the proof of \cref{thm: 1} the $T$-periodicity is used only to ensure the existence of a minimizing sequence $\{u_n\}_n$ for $\Phi|_{\mathcal{N}}$ with a nonzero weak limit $\bar{u}$. Let $\{u_n\}_n$ be a minimizing sequence for $\Phi|_{\mathcal{N}}$. By arguments analogous to \textbf{Step 2} in the proof of \cref{thm: 1}, we may assume that there exists a sequence $\{x_n\}_n \subset \mathbb{Z}^N$ such that $|u_n(x_n)| \geq \delta > 0$ for all $n\in\mathbb{N}$ and $u_n\rightharpoonup \bar{u}$ in $E$. To show $\bar{u}\neq 0$, it suffices to prove that $\{x_n\}$ is bounded. 

Suppose, by contradiction, that $|x_n| \to \infty$ as $n \to \infty$. We claim that $\Phi_{\infty}'(\bar{u}) = 0$. To verify this, fix $w \in C_c(\mathbb{Z}^N)$ and define $w_n(x) := w(x - x_n)$. By \textbf{Step 1} in the proof of \cref{thm: 1}, we may choose $\{u_n\}_n$ so that
$$
|\langle\Phi'(u_n),w_n\rangle| \leq \|\Phi'(u_n)\|_{E^*} \|w_n\| = \|\Phi'(u_n)\|_{E^*} \|w\| \to 0, \quad \text{as } n \to \infty.
$$
Moreover, define $\bar{u}_n(x):=u_n(x-x_n)$ and assume that $\bar{u}_n \rightharpoonup \bar{u}$ in $E$. It follows that
\begin{align*}
&\langle \Phi'(u_n),w_n\rangle\\ =&\sum_{\substack{x,y\in\mathbb{Z}^N\\x\sim y}}\frac{1}{2}|\nabla_{xy}u_n|^{p-2}(\nabla_{xy}u_n)(\nabla_{xy}w_n)+\sum_{x\in\mathbb{Z}^N}V(x)|u_n|^{p-2}u_nw_n-\sum_{x\in\mathbb{Z}^N}f(u_n)w_n  \\
=&\sum_{\substack{x,y\in\mathbb{Z}^N\\x\sim y}}\frac{1}{2}|\nabla_{xy}\bar{u}_n|^{p-2}(\nabla_{xy}\bar{u}_n)(\nabla_{xy}w)+\sum_{x\in\mathbb{Z}^N}V(x-x_n)|\bar{u}_n|^{p-2}\bar{u}_nw-\sum_{x\in\mathbb{Z}^N}f(\bar{u}_n)w\\
\to& \sum_{\substack{x,y\in\mathbb{Z}^N\\x\sim y}}\frac{1}{2}|\nabla_{xy}\bar{u}|^{p-2}(\nabla_{xy}\bar{u})(\nabla_{xy}w)+\sum_{x\in\mathbb{Z}^N}V_{\infty}|\bar{u}|^{p-2}\bar{u}w-\sum_{x\in\mathbb{Z}^N}f(\bar{u})w \\
=& \langle\Phi_{\infty}'(\bar{u}),w\rangle,\quad n\to\infty,
\end{align*}
which implies that $\langle\Phi_{\infty}'(\bar{u}),w\rangle=0$ for every $w \in C_c(\mathbb{Z}^N)$.
Then, since $f(u)u-pF(u)\geq0$ by \cref{lem: fu>pF}, applying Fatou's lemma yields
\begin{align*}
b + o(1) = \Phi(u_n) - \frac{1}{p} \langle\Phi'(u_n), u_n\rangle &= \sum_{x\in\mathbb{Z}^N} \left( \frac{1}{p} f(u_n) u_n - F(u_n) \right)  \\
&= \sum_{x\in\mathbb{Z}^N} \left( \frac{1}{p} f(\bar{u}_n) \bar{u}_n - F(\bar{u}_n) \right)  \\
&\geq \sum_{x\in\mathbb{Z}^N} \left( \frac{1}{p} f(\bar{u}) \bar{u} - F(\bar{u}) \right)  + o(1) \\
&= \Phi_{\infty}(\bar{u}) - \frac{1}{p} \langle\Phi_{\infty}'(\bar{u}), \bar{u}\rangle + o(1) \\
&= \Phi_{\infty}(\bar{u}) + o(1)\\
&\geq b_{\infty} + o(1),\quad n\to\infty,
\end{align*}
which is a contradiction. Therefore, the sequence $\{x_n\}_n$ must be bounded, and consequently $\bar{u} \neq 0$.

\end{proof}

\begin{proof}[Proof of \cref{thm:|u|p}]
    We first show that $b=(\frac{1}{p}-\frac{1}{q})S_{p,q}^{\frac{pq}{q-p}}$.
    In \cite{Hua-Li}, the authors proved the existence of functions in $\mathcal{D}^{1,p}(\mathbb{Z}^N)$ that attain the best Sobolev constant $S_{p,q}$. Let $v$ be such a function. By \cref{lem: UniquenessOft_u_G}, we have $w:=t_v v\in\mathcal{N}$. It follows that
    $$
    \|w\|=t_v\|v\|=t_vS_{p,q}\|v\|_{q}=S_{p,q}\|w\|_{q},
    $$ 
    and 
    $$
    \|w\|^p-\|w\|_{q}^q=\langle\Phi'(w),w\rangle=0.
    $$
    Hence
    $$
    S_{p,q}=\frac{\|w\|}{\|w\|_q}=\|w\|_q^{\frac{q-p}{p}}=\|w\|^{\frac{q-p}{q}},
    $$
    and therefore,
    \begin{equation}\label{eq: b_LessThan}
         b\leq \Phi(w)=\frac{1}{p}\|w\|^p-\frac{1}{q}\|w\|_q^q=(\frac{1}{p}-\frac{1}{q})S_{p,q}^{\frac{pq}{q-p}}.
    \end{equation}
    On the other hand, by the definition of the best Sobolev constant $S_{p,q}$, we have
    $$
        \Phi(u)=\frac{1}{p}\|u\|^p-\frac{1}{q}\|u\|_q^q\geq \frac{1}{p}\|u\|^p-\frac{1}{qS_{p,q}^q}\|u\|^q, \quad \forall u\in \mathcal{D}^{1,p}(\mathbb{Z}^N).
    $$
    Let $R:=S_{p,q}^{\frac{q}{q-p}}>0$. It follows from \cref{lem: UniquenessOft_u_G} that
    $$
        b = \inf_{u \in \mathcal{N}} \Phi(u) = \inf_{u \in  \mathcal{D}^{1,p}(\mathbb{Z}^N)\setminus \{0\}} \max_{s > 0} \Phi(su)\geq \inf_{u \in  \mathcal{D}^{1,p}(\mathbb{Z}^N),\|u\|=R} \Phi(u) 
        \geq (\frac{1}{p}-\frac{1}{q})S_{p,q}^{\frac{pq}{q-p}}.
    $$
    Combining this with inequality \eqref{eq: b_LessThan}, we conclude that 
    \begin{equation}\label{eq: ValueOf_b}
        b=\Phi(w)=(\frac{1}{p}-\frac{1}{q})S_{p,q}^{\frac{pq}{q-p}}.
    \end{equation}
    
    Since \eqref{eq: b_LessThan} implies that $w=t_vv$ is a ground state solution of equation \eqref{eq:SimpleEq} for each $v$ attaining the best Sobolev constant $S_{p,q}$, it remains to show that every ground state solution of \eqref{eq:SimpleEq} attains $S_{p,q}$. Suppose that $w$ is a ground state solution. Then 
    $$
    \|w\|^p-\|w\|_q^q=\langle\Phi'(w),w\rangle=0\quad \text{and}\quad\Phi(w)=b=(\frac{1}{p}-\frac{1}{q})S_{p,q}^{\frac{pq}{q-p}}.
    $$
    Therefore,
    $$
    (\frac{1}{p}-\frac{1}{q})S_{p,q}^{\frac{pq}{q-p}}=\Phi(w)=\frac{1}{p}\|w\|^p-\frac{1}{q}\|w\|_q^q=(\frac{1}{p}-\frac{1}{q})\|w\|^p.
    $$
    It follows that
    $$
    \|w\|=S_{p,q}^{\frac{q}{q-p}}=S_{p,q} (S_{p,q}^{\frac{q}{q-p}})^{\frac{p}{q}}=S_{p,q}\|w\|^{\frac{p}{q}}=S_{p,q}\|w\|_q,
    $$
    which completes the proof.
\end{proof}

\section{Proof of \cref{thm:InfiniteSolution}} \label{sec:5}
The argument in this section adopts the approach developed in the proof of \cite[Theorem 1.2]{Szulkin-Weth-paper}. We first introduce the definition and some basic properties of Krasnoselskii genus, see \cite{StruweVariationalMethods} for details.

\begin{definition}
Let $E$ be a Banach space. For each closed and symmetric nonempty subsets $A \subset E$, i.e., $A = -A = \overline{A} \neq \emptyset$, define the Krasnoselskii genus of $A$ as
$$
\gamma(A) = 
\begin{cases}
\inf\{m \in \mathbb{N}^+ : \exists h \in C^0(A; \mathbb{R}^m \setminus \{0\}), h(-u) = -h(u)\}, \\
\infty, \text{ if } \{\cdots\} = \emptyset, \text{ in particular, if } 0 \in A,
\end{cases}
$$
and define $\gamma(\emptyset) = 0$.
\end{definition}

\begin{proposition}\label{prop: GenusProperties}
Let $A, A_1, A_2$ be closed and symmetric subsets of a Banach space $E$. Then
\begin{enumerate}[label=\rm(\roman*)]
    \item $\gamma(A) \geq 0$ and $\gamma(A) = 0$ if and only if $A = \emptyset$,
    \item $\gamma(A_1 \cup A_2) \leq \gamma(A_1) + \gamma(A_2)$,
    \item if $f : A \to f(A)$ is odd and continuous, then $\gamma(A) \leq \gamma(f(A))$,
    \item if $A$ is compact, $0 \notin A$ and $A \neq \emptyset$, then there exist open set $U \supset A$ such that $\gamma(U) = \gamma(A)$.
\end{enumerate}
\end{proposition}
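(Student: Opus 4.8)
The plan is to establish the four properties in order of increasing difficulty, relying throughout on one recurring device: producing odd continuous extensions of maps into $\mathbb{R}^m$ by combining the Tietze extension theorem with antipodal symmetrization. Property (i) is immediate from the definition, since $\gamma$ takes values in $\{0\}\cup\mathbb{N}^+\cup\{\infty\}$; the convention $\gamma(\emptyset)=0$ handles the empty set, while for $A\neq\emptyset$ any admissible odd map forces $\gamma(A)\geq 1$, and if no such map exists then $\gamma(A)=\infty$. For (iii), if $\gamma(f(A))=m<\infty$ and $h\colon f(A)\to\mathbb{R}^m\setminus\{0\}$ is an odd continuous realizing map, then I would simply compose: $h\circ f$ is continuous, and $(h\circ f)(-u)=h(f(-u))=h(-f(u))=-(h\circ f)(u)$ using the oddness of $f$, so $h\circ f$ witnesses $\gamma(A)\leq m$; the case $\gamma(f(A))=\infty$ is vacuous. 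The same restriction argument, applied to the inclusion of one set into another, yields the monotonicity $A\subset B\Rightarrow\gamma(A)\leq\gamma(B)$, which I will need for (iv).

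For (ii) I would assume $\gamma(A_1)=m$ and $\gamma(A_2)=n$ are both finite (otherwise the inequality is trivial) and pick odd continuous maps $h_i\colon A_i\to\mathbb{R}^{m_i}\setminus\{0\}$. Extending each $h_i$ componentwise to a continuous map $\tilde h_i\colon E\to\mathbb{R}^{m_i}$ via Tietze (using that a Banach space is metric, hence normal) and replacing it by $u\mapsto\tfrac12\bigl(\tilde h_i(u)-\tilde h_i(-u)\bigr)$, I obtain an odd continuous extension that still agrees with $h_i$ on the symmetric set $A_i$. The concatenation $h:=(\tilde h_1,\tilde h_2)\colon A_1\cup A_2\to\mathbb{R}^{m+n}$ is then odd and continuous, and it never vanishes, since on $A_1$ its first block is nonzero and on $A_2$ its second block is nonzero. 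Hence $\gamma(A_1\cup A_2)\leq m+n$.

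Property (iv) splits into two parts. First I would show $\gamma(A)<\infty$ for compact $A$ with $0\notin A$: cover $A$ by finitely many balls $N_i=B(u_i,r_i)$ with $r_i<\|u_i\|$, which guarantees $N_i\cap(-N_i)=\emptyset$, and set \[ h_i(u):=\operatorname{dist}(u,E\setminus N_i)-\operatorname{dist}(u,E\setminus(-N_i)). \] Each $h_i$ is continuous and odd, and for $u\in N_i$ (resp. $u\in -N_i$) exactly one distance vanishes, so $h_i(u)$ has a definite sign; thus $h:=(h_1,\dots,h_k)$ maps $A$ into $\mathbb{R}^k\setminus\{0\}$ and $\gamma(A)=m<\infty$. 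Then, taking an odd continuous realizing map $h\colon A\to\mathbb{R}^m\setminus\{0\}$ and extending it to an odd continuous $\tilde h\colon E\to\mathbb{R}^m$ as above, the open symmetric set $U:=\{u\in E:\tilde h(u)\neq 0\}$ contains $A$, and $\tilde h|_U$ shows $\gamma(U)\leq m$. Combined with $\gamma(U)\geq\gamma(A)$ from monotonicity, this gives $\gamma(U)=\gamma(A)$.

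The hard part will be organizing the two extension arguments cleanly: in both (ii) and (iv) one must verify that the Tietze extension, after symmetrization, both remains odd on all of $E$ and continues to coincide with the original map on the symmetric domain, which is where the oddness of $h_i$ together with symmetry of $A_i$ enters. For (iv) the genuinely new ingredient is the finiteness of the genus, and the crux there is the covering with $N_i\cap(-N_i)=\emptyset$, since it is precisely the disjointness from the antipode that lets the distance-difference functions record membership with an unambiguous sign.
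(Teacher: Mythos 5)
The paper gives no proof of this proposition at all — it is quoted from Struwe's book \cite{StruweVariationalMethods} — so the only comparison is against the standard argument there, which is essentially what you reproduce: restriction/composition for (i), (iii) and monotonicity; Tietze extension plus antipodal symmetrization for (ii); and, for (iv), a finite cover of the compact set $A$ by balls $B(u_i,r_i)$ with $r_i<\|u_i\|$ (hence disjoint from their antipodes) together with the signed distance maps $h_i$. All four arguments are correct as proofs of the statement as written, including the key verifications that the symmetrized Tietze extension still agrees with $h_i$ on the symmetric set $A_i$, and that $r_i<\|u_i\|$ forces $N_i\cap(-N_i)=\emptyset$.

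One caveat on (iv) worth recording: the paper's definition of $\gamma$ applies only to closed symmetric sets, and when the proposition is actually invoked (in the proof of \cref{thm:InfiniteSolution}) it is used in the form $\gamma(\overline{U})=\gamma(K_{c_k})$ for a metric $\delta$-neighborhood $U$. Your choice $U=\{u\in E:\tilde h(u)\neq 0\}$ does not deliver this version directly, because $\tilde h$ typically vanishes on $\partial U$, so nothing bounds $\gamma(\overline{U})$ from above. The repair is routine and uses compactness of $A$: if for every $n$ there were $x_n$ with $\mathrm{dist}(x_n,A)<1/n$ and $\tilde h(x_n)=0$, these zeros would accumulate at a zero of $\tilde h$ in $A$, a contradiction; hence there is $\delta>0$ with $\tilde h\neq 0$ on the closed symmetric set $\{u:\mathrm{dist}(u,A)\leq\delta\}$. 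Taking instead $U:=\{u:\mathrm{dist}(u,A)<\delta\}$, the restriction of $\tilde h$ to $\overline{U}$ gives $\gamma(\overline{U})\leq\gamma(A)$, and monotonicity gives the reverse inequality, so $\gamma(U)=\gamma(\overline{U})=\gamma(A)$. With this small modification your proof also covers the form of (iv) that the paper actually uses.
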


We now recite some notations introduced in \cite{Szulkin-Weth-paper}. For $d \geq e \geq c$, set
\begin{alignat*}{3}
\Phi^d &:= \{u \in \mathcal{N} : \Phi(u) \leq d\}, &\quad 
\Phi_e &:= \{u \in \mathcal{N} : \Phi(u) \geq e\}, &\quad 
\Phi_e^d &:= \Phi^d \cap \Phi_e, \\
\Psi^d &:= \{w \in S : \Psi(w) \leq d\}, &\quad 
\Psi_e &:= \{w \in S : \Psi(w) \geq e\}, &\quad 
\Psi_e^d &:= \Psi^d \cap \Psi_e, \\
K &:= \{w \in S : \Psi'(w) = 0\}, &\quad 
K_d &:= \{w \in {K} : \Psi(w) = d\}, &\quad 
\end{alignat*}

Since $f(x,u)$ is odd in $u$, it follows that $K$ is symmetric with respect to the origin, i.e., $u \in K$ if and only if $-u \in K$. Moreover, as $K$ is invariant under the action of $\mathbb{Z}^N$ and $\mathcal{O}(u)\cap\mathcal{O}(-u)=\varnothing$, we can select a subset $\mathcal{F}$ of $K$ such that $\mathcal{F}=-\mathcal{F}$ and $|\mathcal{F}\cap\mathcal{O}(u)|=1$ for each $u\in K$. Consequently, to prove \cref{thm:InfiniteSolution}, it suffices to show that $\mathcal{F}$ is infinite. We proceed by contradiction and assume that
\begin{equation}\label{eq: FinteSetAssumption}
    \mathcal{F}\text{ is a finite set}.
\end{equation}
The following lemmas are derived under this assumption.

\begin{lemma}\label{lem: DiscretenessOfSolutions}
$\kappa := \inf\{\|v - w\|: v, w \in K, v \neq w\} > 0$.
\end{lemma}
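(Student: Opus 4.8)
The plan is to argue by contradiction. Suppose $\kappa = 0$; then there exist sequences $\{v_n\}_n, \{w_n\}_n \subset K$ with $v_n \neq w_n$ and $\|v_n - w_n\| \to 0$. The strategy is to exploit the finiteness assumption \eqref{eq: FinteSetAssumption} on $\mathcal{F}$ together with the $\mathbb{Z}^N$-translation invariance of $\|\cdot\|$: because $V$ and $f$ are $T$-periodic, each translation $u \mapsto u(\cdot - kT)$ is an isometry of $E$, so every orbit $\mathcal{O}(u)$ lies on $S$, and $K = \bigcup_{u \in \mathcal{F}} \mathcal{O}(u)$ is a finite union of orbits. The idea is to translate each pair $(v_n,w_n)$ so as to reduce to a single fixed pair of representatives and then rule out both possible limiting behaviours.

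More precisely, since $\mathcal{F}$ is finite, each $v_n$ equals $\bar v_n(\cdot - k_n T)$ for a unique representative $\bar v_n \in \mathcal{F}$ and some $k_n \in \mathbb{Z}^N$, and likewise $w_n = \bar w_n(\cdot - l_n T)$. Passing to a subsequence I may assume $\bar v_n \equiv \bar v$ and $\bar w_n \equiv \bar w$ are fixed elements of $\mathcal{F}$. Applying the isometry $u \mapsto u(\cdot + k_n T)$ and writing $m_n := l_n - k_n$, I obtain
$$
\|\bar v - \bar w(\cdot - m_n T)\| = \|v_n - w_n\| \to 0 .
$$
I then distinguish two cases according to the behaviour of $\{m_n\}_n$. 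If $\{m_n\}$ has a bounded subsequence, then along it $m_n \equiv m$ is constant and $\bar v = \bar w(\cdot - mT)$, so $\bar v$ and $\bar w$ lie in the same orbit; uniqueness of representatives in $\mathcal{F}$ forces $\bar v = \bar w$, whence $\bar v = \bar v(\cdot - mT)$. If $m \neq 0$ this would make $\bar v$ a nonzero $mT$-periodic function, which is impossible since $\bar v \in E \hookrightarrow \ell^{p^*}(\mathbb{Z}^N)$ decays at infinity; hence $m = 0$, which gives $k_n = l_n$ and $v_n = w_n$, contradicting $v_n \neq w_n$. In the remaining case $|m_n| \to \infty$, for each fixed $x$ one has $\bar w(x - m_n T) \to 0$ because $\bar w \in \ell^{p^*}(\mathbb{Z}^N)$; on the other hand $\bar w(\cdot - m_n T) \to \bar v$ in $E$, hence in $\ell^{p^*}(\mathbb{Z}^N)$ and therefore pointwise, so $\bar v \equiv 0$, contradicting $\bar v \in S$.

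I expect the main obstacle to be the unbounded-translation case: the heart of the matter is the vanishing of translates escaping to infinity, which I would handle by invoking the continuous embedding $E \hookrightarrow \ell^{p^*}(\mathbb{Z}^N)$ to pass from $\|\cdot\|$-convergence to pointwise convergence, and combining it with the decay of $\ell^{p^*}$ functions. The reduction to fixed representatives via the finiteness of $\mathcal{F}$ and the isometry of translations is conceptually routine, but must be carried out carefully so that the strict inequality $v_n \neq w_n$ is preserved after translating and passing to subsequences, since it is precisely this inequality that is contradicted in the bounded case.
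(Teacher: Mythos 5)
Your proposal is correct and takes essentially the same route as the paper's proof: use the finiteness of $\mathcal{F}$ together with the $T$-translation isometries of $E$ to reduce to fixed representatives $\bar{v},\bar{w}\in\mathcal{F}$ and a single translation parameter $m_n$, then dispose of the bounded case by pigeonholing a constant subsequence, and of the unbounded case by the fact that translates escaping to infinity tend to zero pointwise while the limit must have norm one. The only differences are cosmetic: in the bounded case the paper obtains an immediate contradiction from $\bar{v}\neq\bar{w}(\cdot-mT)$ (which is preserved under translation) together with $\|\bar{v}-\bar{w}(\cdot-mT)\|=0$, so your detour through orbit-uniqueness and the non-existence of nonzero periodic elements of $E$ is sound but unnecessary; and in the unbounded case the paper argues via weak convergence and weak lower semicontinuity of the norm, whereas you use the strong convergence hypothesis plus the embedding $E\hookrightarrow\ell^{p^*}(\mathbb{Z}^N)$, which is, if anything, slightly more elementary.
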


\begin{proof}
Since $\mathcal{F}$ is finite, we can write $\mathcal{F}=\{u_i\}_{i=1}^m$. Then, by the definition of $\mathcal{F}$,
$$
\begin{aligned}
    \kappa &=\inf\{\|u_i(\cdot-kT) - u_j(\cdot-lT)\|:  1\leq i,j\leq m,\ k,l\in\mathbb{Z}^N,\ u_i(\cdot-kT) \neq u_j(\cdot-lT)\}\\
    &=\inf\{\|u_i(\cdot-kT) - u_j\|:  1\leq i,j\leq m,\ k\in\mathbb{Z}^N,\ u_i(\cdot-kT) \neq u_j\}.
\end{aligned}
$$
Suppose, by contradiction, that $\kappa=0$. Then there exist sequences $\{v_n\}_n,\{w_n\}_n\subset\mathcal{F}$ and $\{k_n\}\subset\mathbb{Z}^N$ such that $v_n(\cdot-k_nT)\neq w_n$ for all $n\in\mathbb{N}$ and 
$$
\lim_{n\to\infty}\|v_n(\cdot-k_nT)- w_n\|=0.
$$
Passing to a subsequence, we may assume that $v_n= v\in\mathcal{F}$ and $w_n= w\in\mathcal{F}$ for all $n\in\mathbb{N}$. If $\{k_n\}_n$ is bounded in $\mathbb{Z}^N$, then there exists a subsequence $\{k_{n_l}\}_l$ and $k\in\mathbb{Z}^N$ such that $k_{n_l}=k$ for all $l\in\mathbb{N}$. It then follows that
$$
\|v(\cdot - kT) - w\| =\|v_{n_l}(\cdot - k_{n_l} T) - w_{n_l}\| =\lim_{l \to \infty} \|v_{n_l}(\cdot - k_{n_l} T) - w_{n_l}\|= 0,
$$
which is a contradiction. If $\{k_n\}_n$ is unbounded in $\mathbb{Z}^N$, then since the sequence $\{v(\cdot-k_nT)\}_n$ is bounded in $E$, passing to a subsequence, we may assume that $v(\cdot-k_nT)\rightharpoonup v_0\in E$. However, since $v(\cdot-k_nT)\to0$ pointwise as $n\to\infty$, it follows that $v_0=0$. Hence,
$$
\lim_{n\to\infty}\|v(\cdot-k_{n}T)-w\|\geq\|w\|=1,
$$ 
which is again a contradiction. Therefore, we conclude that $\kappa > 0$.
\end{proof}

The following lemma plays a crucial role in the proof and will be established in the Appendix.
\begin{lemma}[Discreteness of PS-sequences]\label{lem: DiscretenessOfPS-sequences}
Suppose that $d \geq b$, where $$
b=\inf_{u\in\mathcal{N}}\Phi(u)=\inf_{v\in S}\Psi(v).
$$ 
If $\{u_n\}_n, \{v_n\}_n \subset \Psi^d$ are two Palais-Smale sequences for $\Psi$, then either $\|u_n - v_n\| \to 0$ as $n \to \infty$ or $\varlimsup_{n\to\infty} \|u_n - v_n\| \geq \rho(d) > 0$, where $\rho(d)$ is a constant that only depends on $d$.
\end{lemma}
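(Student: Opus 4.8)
The plan is to establish the dichotomy for two Palais-Smale sequences by transferring the problem from the sphere $S$ to the Nehari manifold $\mathcal{N}$ via the homeomorphism $m$, and then to exploit a concentration-compactness type argument combined with the discreteness of critical points (\cref{lem: DiscretenessOfSolutions}). The core idea is that a Palais-Smale sequence for $\Psi$ lifts, under $m$, to a Palais-Smale sequence for $\Phi$ on $\mathcal{N}$ by \cref{prop: PSforPhiAndPsi}(b). Since $\Phi$ is coercive on $\mathcal{N}$ (\cref{prop: PhiCoercive}) and $\Phi \leq d$ on $\Psi^d$, these lifted sequences are bounded in $E$. Passing to weak limits and using the compactness afforded by $T$-periodicity (translating so that the sequences do not vanish), one extracts weak limits that are genuine critical points of $\Phi$, hence elements of $K$ after renormalization.

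First I would translate the statement into the Nehari manifold. Set $\tilde u_n := m(u_n)$ and $\tilde v_n := m(v_n)$, which are bounded Palais-Smale sequences for $\Phi$ lying in $\Phi^d$. Because $m$ and $m^{-1}(u)=u/\|u\|$ are uniformly continuous on the relevant bounded regions (using uniform convexity of $E$ and the bounds $\|\tilde u_n\| \leq C(d)$ coming from coercivity together with $\mathcal{N}$ being bounded away from $0$), the quantity $\|u_n - v_n\|$ is controlled above and below by $\|\tilde u_n - \tilde v_n\|$. Thus it suffices to prove the analogous dichotomy for $\{\tilde u_n\}$ and $\{\tilde v_n\}$ on $\mathcal{N}$. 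The natural quantity to analyze is $\|\tilde u_n - \tilde v_n\|$, and the plan is to argue that if this does not tend to $0$ then, after suitable $\mathbb{Z}^N$-translations, the two sequences converge to two \emph{distinct} critical points of $\Phi$, whose separation is bounded below by $\kappa > 0$ from \cref{lem: DiscretenessOfSolutions}.

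The central mechanism, and the main obstacle, is a profile decomposition for bounded Palais-Smale sequences of $\Phi$ on $\mathbb{Z}^N$. The hard part will be establishing that any bounded Palais-Smale sequence decomposes, up to a subsequence, into a finite sum of translated critical points plus a vanishing remainder, with the energy and norm splitting accordingly; this is where the $T$-periodicity and the Brezis-Lieb type splitting for the $p$-Laplacian and the nonlinear term must be invoked, along with a nonvanishing lemma guaranteeing that if the remainder carries positive energy it must concentrate somewhere and be translated back to produce a nonzero weak limit. Once such a decomposition is available, the set of attainable limiting profiles is precisely $K$ (after the renormalization $u \mapsto u/\|u\|$), so the possible limiting values of $\|\tilde u_n - \tilde v_n\|$ form a discrete set bounded away from any nonzero value by a constant depending only on the number of profiles, which in turn is bounded in terms of $d$ via the energy level $b > 0$ of each profile. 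This yields the threshold $\rho(d)$.

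Concretely, I would choose $\rho(d)$ by contradiction: if the dichotomy fails, there are Palais-Smale sequences with $0 < \varliminf \|\tilde u_n - \tilde v_n\|$ yet with infinitely many subsequences along which $\|\tilde u_n - \tilde v_n\|$ takes arbitrarily small positive values below every candidate threshold. Using the profile decomposition, both $\tilde u_n$ and $\tilde v_n$ converge (after translation and subsequence) to configurations of critical points; two such limits that are arbitrarily close must coincide by the gap $\kappa$ of \cref{lem: DiscretenessOfSolutions}, forcing $\|\tilde u_n - \tilde v_n\| \to 0$ and contradicting the assumption. The constant $\rho(d)$ emerges as the minimal separation among finite configurations of critical points with total energy at most $d$; since each nontrivial profile contributes at least $b > 0$ to the energy, the number of profiles is at most $\lfloor d/b \rfloor$, so only finitely many configuration types arise and their pairwise separations have a positive infimum depending only on $d$. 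I expect the verification of the Brezis-Lieb splitting for the nonlinear term under only the growth hypotheses \refcond{$S_2$}-\refcond{$S_3$} (rather than a global Ambrosetti-Rabinowitz condition) to require the most care, which is consistent with the excerpt deferring the full proof of this lemma to the Appendix.
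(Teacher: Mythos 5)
Your proposal takes a genuinely different route from the paper, but as written it has two genuine gaps, one of which is the entire technical core. The paper's proof never uses a profile decomposition: it lifts $u_n, v_n$ to $\hat{u}_n = m(u_n)$, $\hat{v}_n = m(v_n)$ and runs a direct dichotomy on $\|\hat{u}_n - \hat{v}_n\|_{q}$. If this tends to $0$, the elementary convexity inequality of \cref{lem: pInequality} (this is where $p\geq 2$ enters) combined with the growth conditions \refcond{$S_2$}$\sim$\refcond{$S_3$} gives $\|\hat{u}_n - \hat{v}_n\|\to 0$, hence $\|u_n-v_n\|\to0$. If not, a \emph{single} translation and weak-limit extraction produces critical points $\hat{u}\neq\hat{v}$ of $\Phi$ (possibly one of them zero), and the lower bound $\rho(d)$ comes from weak lower semicontinuity plus a separate uniform separation estimate: $\|\xi f - \eta g\| \geq \mu_d > 0$ for distinct critical points $f,g \in \Phi^d$ and $\xi,\eta \in [1/\nu_d, 1/c]$, proved by contradiction using \cref{lem: DiscretenessOfSolutions} and the uniqueness statement in \cref{lem: UniquenessOft_u_G}. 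By contrast, you route everything through a representation theorem for bounded Palais--Smale sequences (finite sum of translated critical points plus a remainder vanishing \emph{strongly} in $E$, with energy quantization). You explicitly defer this theorem as ``the hard part,'' but it is not a known black box in this setting: for the discrete $p$-Laplacian with $p\neq 2$ and only \refcond{$S_2$}$\sim$\refcond{$S_5$}, proving strong convergence of the remainder and the norm/energy splitting requires precisely the kind of estimates the paper develops (and more, since the extraction must be iterated). So the central mechanism of your proof is missing, and everything downstream (the ``discrete set of limiting values,'' the bound $\lfloor d/b\rfloor$ on the number of profiles, the minimal separation of configurations under arbitrary relative translations) is asserted rather than proved.

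The second gap is the reduction step itself. You claim $\|u_n - v_n\|$ is ``controlled above and below'' by $\|\tilde{u}_n - \tilde{v}_n\|$ because $m$ and $m^{-1}$ are ``uniformly continuous on the relevant bounded regions.'' Only one direction is justified: $m^{-1}(u) = u/\|u\|$ is Lipschitz on $\Phi^d$, since there $c \leq \|u\| \leq \nu_d$ by \cref{lem: Phi_BoundedBelow} and \cref{prop: PhiCoercive}; hence $\|\tilde{u}_n - \tilde{v}_n\|\to0$ implies $\|u_n - v_n\|\to 0$. The reverse control would require $w \mapsto s_w$ to be uniformly continuous on $\Psi^d \subset S$, which \cref{prop: mIsHomeomorphism} does not give (it yields only continuity, and $\Psi^d$ is bounded but far from compact in infinite dimensions), so ``it suffices to prove the dichotomy on $\mathcal{N}$'' does not follow as stated. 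This particular gap is repairable inside your framework --- a profile decomposition of $\tilde{u}_n,\tilde{v}_n$ together with $\|\tilde{u}_n\|\to\alpha$, $\|\tilde{v}_n\|\to\beta \in [c,\nu_d]$ lets you compute the asymptotics of $u_n - v_n = \tilde{u}_n/\|\tilde{u}_n\| - \tilde{v}_n/\|\tilde{v}_n\|$ directly, which is exactly how the paper handles the sphere/manifold passage --- but as written the logic is broken, and in any case the repair still presupposes the unproven decomposition theorem.
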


It is well known (see e.g. \cite[Lemma II.3.9]{StruweVariationalMethods}) that $\Psi$ admits a pseudo-gradient vector field, i.e., there exists a Lipschitz continuous map $H : S \setminus K \to TS$ such that $H(w) \in T_w S$ for all $w \in S \setminus K$ and
\begin{equation}
\begin{cases}
\|H(w)\| < 2 \|\Psi'(w)\| \\
\langle \Psi'(w), H(w) \rangle > \frac{1}{2} \|\Psi'(w)\|^2
\end{cases} \quad \text{for all } w \in S \setminus K,
\end{equation}
where
$$
\|\Psi'(w)\| := \sup \{ \langle \Psi'(w), v \rangle : v \in T_w S, \|v\| \leq 1 \}.
$$ 
Since $\Psi$ is an even functional, i.e., $\Psi(w)=\Psi(-w)$ for all $w\in S$, it follows that $\Psi'(-w) = -\Psi'(w)$. Therefore, by replacing $H(w)$ with $(H(w) - H(-w))/2$ if necessary, we may assume that $H$ is an odd vector field, i.e., $H(-w) = -H(w)$ for all $w \in S$.

We denote by $\eta : \mathcal{G} \to S \setminus K$ the ($\Psi$-decreasing) flow generated by the pseudo-gradient vector field $-H$, that is, the solution to the initial value problem
\begin{equation}\label{eq: PsiDecreasing_Flow}
\begin{cases}
\frac{d}{dt} \eta(t,w) = -H(\eta(t,w)), \\
\eta(0,w) = w.
\end{cases}
\end{equation}

Here, the domain $\mathcal{G} \subset \mathbb{R} \times (S \setminus K)$ is given by
$$
\mathcal{G} := \{ (t,w) : w \in S \setminus K,\ T^-(w) < t < T^+(w) \},
$$
where $T^-(w)$ and $T^+(w)$ denote the maximal negative and positive existence times of the trajectory through $w$ respectively. The following lemmas concern the limiting behavior of the flow $\eta(t,w)$ as $t$ approaches the maximal positive existence time. For the sake of completeness, we provide the proofs below, which follow the same arguments as in \cite{Szulkin-Weth-paper}.

\begin{lemma}\label{lem: DeformLemma1}
For every $w \in S$ the limit $\lim_{t\to T^+(w)} \eta(t,w)$ exists and is a critical point of $\Psi$.
\end{lemma}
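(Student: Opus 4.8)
The plan is to use the monotonicity of $\Psi$ along the flow to reduce everything to a convergence statement, and then invoke the discreteness of Palais--Smale sequences (\cref{lem: DiscretenessOfPS-sequences}) to promote subsequential convergence to convergence of the entire trajectory. First I would record the energy identity: since $\langle\Psi'(w),H(w)\rangle>\tfrac12\|\Psi'(w)\|^2$, differentiating along the flow gives $\frac{d}{dt}\Psi(\eta(t,w))=-\langle\Psi'(\eta),H(\eta)\rangle<-\tfrac12\|\Psi'(\eta(t,w))\|^2\le 0$. Thus $t\mapsto\Psi(\eta(t,w))$ is nonincreasing, and being bounded below by $b>0$ (\cref{lem: Phi_BoundedBelow} together with \cref{prop: PSforPhiAndPsi}(c)), it converges to some $c\ge b$ as $t\to T^+(w)$. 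Integrating the inequality then yields the crucial bound $\int_0^{T^+(w)}\beta(s)^2\,ds\le 2(\Psi(w)-c)<\infty$, where $\beta(s):=\|\Psi'(\eta(s,w))\|$. Note also that $\eta(t,w)\in\Psi^d$ for all $t$, with $d:=\Psi(w)$, so \cref{lem: DiscretenessOfPS-sequences} will apply with $\rho=\rho(d)$.

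Next I would split into two cases. If $T^+(w)<\infty$, the Cauchy--Schwarz inequality together with $\|H\|<2\|\Psi'\|$ gives $\int_0^{T^+}\|H(\eta)\|\,ds\le 2\sqrt{T^+}\big(\int_0^{T^+}\beta^2\big)^{1/2}<\infty$, so the trajectory has finite length; since $\|\eta(t',w)-\eta(t,w)\|\le\int_t^{t'}\|H(\eta)\|\,ds$, it is Cauchy and converges to some $w^*\in S$. Because $S\setminus K$ is open and $H$ is Lipschitz there, if $w^*\notin K$ the flow could be continued past $T^+$, contradicting maximality of $T^+$; hence $w^*\in K$. This case is routine.

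The genuinely delicate case, which I expect to be the main obstacle, is $T^+(w)=\infty$: here finite length can fail (e.g. $\beta(s)\sim 1/s$), so convergence cannot be obtained from length, and $\int_0^\infty\beta^2<\infty$ only produces a sequence $t_n\to\infty$ with $\beta(t_n)\to 0$, i.e. a single Palais--Smale sequence. To force convergence of the whole path I would argue by contradiction: if $\eta(\cdot,w)$ is not Cauchy, there is $\varepsilon\in(0,\rho/3)$ and disjoint intervals $[a_n,b_n]$ with $a_n\to\infty$ on which the flow first leaves the ball of radius $\varepsilon$ about $\eta(a_n,w)$, so $\|\eta(b_n,w)-\eta(a_n,w)\|=\varepsilon$ and $\|\eta(s,w)-\eta(a_n,w)\|\le\varepsilon$ on $[a_n,b_n]$. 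Writing $\gamma_n^2:=\int_{a_n}^{b_n}\beta^2\to 0$, the estimate $\|\eta(s',w)-\eta(s,w)\|\le 2(s'-s)^{1/2}\gamma_n$ forces these excursions to lengthen, and since $\{s\in[a_n,b_n]:\beta(s)>\gamma_n^{1/2}\}$ has measure at most $\gamma_n$, low-$\beta$ times are abundant. I would then select low-$\beta$ times $p_n$ near $a_n$ (where the displacement from $\eta(a_n,w)$ is small) and $q_n$ near the first moment the displacement reaches $\varepsilon/2$; this makes $\{\eta(p_n,w)\}$ and $\{\eta(q_n,w)\}$ two Palais--Smale sequences in $\Psi^d$ with $\|\eta(p_n,w)-\eta(q_n,w)\|\to\varepsilon/2\in(0,\rho)$, contradicting the dichotomy of \cref{lem: DiscretenessOfPS-sequences}. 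Hence $\eta(t,w)\to w^*$ for some $w^*\in S$, and choosing $t_n\to\infty$ with $\beta(t_n)\to 0$, continuity of $\Psi'$ gives $\Psi'(w^*)=\lim_n\Psi'(\eta(t_n,w))=0$, so $w^*\in K$. The delicate point throughout is the simultaneous control of displacement and of $\beta$ at the chosen endpoints, so that the extracted sequences are legitimate Palais--Smale sequences whose limiting separation lies strictly between $0$ and $\rho$.
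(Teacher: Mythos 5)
Your proof is correct and follows essentially the same route as the paper: the same split into $T^+(w)<\infty$ (finite length via Cauchy--Schwarz and the energy identity) and $T^+(w)=\infty$, with the infinite-time case resolved by extracting two Palais--Smale sequences along the flow whose separation stays strictly between $0$ and $\rho(d)$, contradicting \cref{lem: DiscretenessOfPS-sequences}, and then concluding criticality of the limit. The only difference is technical: where you select low-$\|\Psi'\|$ times using the integrability of $\|\Psi'(\eta(\cdot,w))\|^2$ together with a Chebyshev measure bound, the paper defines first-exit/last-entry times of small balls and forces $\inf\|\Psi'\|\to 0$ on those intervals directly from the vanishing energy increments; both devices accomplish the same selection.
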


\begin{proof}
Fix $w\in S$ and set $d:= \Psi(w)$. Since $\Psi(\eta(t,w))$ is decreasing and bounded below by $b=\inf_{v\in S}\Psi(v)$, it converges to some $c\geq b$ as $t\to T^+(w)$. We distinguish two cases.

\textbf{Case 1.} $T^+(w)<\infty$. For all $0\leq t_1< t_2<T^+(w)$, we have
$$
\begin{aligned}
    \|\eta(t_2,w)-\eta(t_1,w)\|^2 \leq\left(\int_{t_1}^{t_2}\|H(\eta(t,w))\| dt\right)^2
    &\leq (t_2-t_1)\int_{t_1}^{t_2}\|H(\eta(t,w))\|^2 dt\\
    &\leq 8(t_2-t_1)\int_{t_1}^{t_2}\langle\Psi'(\eta(t,w)),H(\eta(t,w))\rangle dt\\
    &=8(t_2-t_1)(\Psi(\eta(t_1,w))-\Psi(\eta(t_2,w)))\\
    &\leq 8T^+(w)(\Psi(\eta(t_1,w))-\Psi(\eta(t_2,w))),
\end{aligned}
$$
which implies that $\lim_{t\to T^+(w)}\eta(t,w)$ exists. This limit must be a critical point of $\Psi$, for otherwise the trajectory $t\to\eta(t,w)$ could be extended beyond $T^+(w)$.

\textbf{Case 2.} $T^+(w)=\infty$. We first show that $\lim_{t\to T^+(w)}\eta(t,w)$ exists. Suppose, by contradiction, that this limit does not exist. Then there exists $0<\epsilon_0<\rho(d)/5$ and a sequence $t_n\to\infty$ such that $\|\eta(t_{n+1},w)-\eta(t_n,w)\|=\epsilon_0$. Define
$$
t_n^1:=\inf\{t\in[t_n,t_{n+1}]:\|\eta(t,w)-\eta(t_n,w)\|=\epsilon_0\},\ \kappa_n:=\inf\{\|\Psi'(\eta(t,w))\|:t\in[t_n,t_n^1]\}.
$$
Then
$$
\begin{aligned}
    \epsilon_0=\|\eta(t_n^1,w)-\eta(t_n,w)\|\leq\int_{t_n}^{t_n^1}\|H(\eta(t,w))\| dt&\leq2\int_{t_n}^{t_n^1}\|\Psi'(\eta(t,w)\| dt\\
    &\leq \frac{2}{\kappa_n}\int_{t_n}^{t_n^1}\|\Psi'(\eta(t,w)\|^2 dt\\
    &\leq \frac{4}{\kappa_n}\int_{t_n}^{t_n^1}\langle\Psi'(\eta(t,w)),H(\eta(t,w)) \rangle dt\\
    &\leq \frac{4}{\kappa_n}(\Psi(\eta(t_n,w))-\Psi(\eta(t_n^1,w))).
\end{aligned}
$$
Since $\Psi(\eta(t_n,w))-\Psi(\eta(t_n^1,w))\to0$ as $n\to\infty$, it follows that $\kappa_n\to0$. Hence, there exists $\tau_n^1\in[t_n,t_n^1]$ such that $\|\Psi'(\eta(\tau_n^1,w))\|\to0$. Similarly, define
$$
t_n^2 :=\sup\{t\in[t_n,t_{n+1}]:\|\eta(t_{n+1},w)-\eta(t,w)\|=\epsilon_0\},
$$
there exists $\tau_n^2\in[t_n^2,t_{n+1}]$ such that $\|\Psi'(\eta(\tau_n^2,w))\|\to0$. Since $t\mapsto\Psi(\eta(t,w))$ is decreasing, both $\{\eta(\tau_n^1,w)\}_n$ and $\{\eta(\tau_n^2,w)\}_n$ are Palais-Smale sequences for $\Psi$. Moreover,
$$
\begin{aligned}
    \|\eta(\tau_n^1,w)-\eta(\tau_n^2,w)\|\geq&\|\eta(t_{n+1},w)-\eta(t_n,w)\|-\|\eta(t_{n+1},w)-\eta(\tau_n^2,w)\|\\
    &-\|\eta(t_n,w)-\eta(\tau_n^1,w)\|\\
    \geq& 3\epsilon_0-\epsilon_0-\epsilon_0=\epsilon_0,
\end{aligned}
$$
and
$$
\begin{aligned}
    \|\eta(\tau_n^1,w)-\eta(\tau_n^2,w)\|\leq&\|\eta(t_n,w)-\eta(\tau_n^1,w)\|+\|\eta(t_{n+1},w)-\eta(\tau_n^2,w)\|\\
    &+\|\eta(t_{n+1},w)-\eta(t_n,w)\|\\
    \leq& \epsilon_0+\epsilon_0+3\epsilon_0=5\epsilon_0<\rho(d),
\end{aligned}
$$
which contradicts \cref{lem: DiscretenessOfPS-sequences}. Therefore, the limit $\hat{w}:=\lim_{t\to T^+(w)}\eta(t,w)$ exists. 
It remains to show that $\hat{w}$ is a critical point of $\Psi$. If not, then since $\Psi\in C^1(S,\mathbb{R})$, we would have
$$
\varliminf_{t\to\infty}\frac{d}{dt}\Psi(\eta(t,w))=\varliminf_{t\to\infty} \langle\Psi'(\eta(t,w)),H(\eta(t,w)) \rangle\geq \frac{1}{2}\varliminf_{t\to\infty}\|\Psi'(\eta(t,w))\|^2=\frac{1}{2}\|\Psi'(\hat{w})\|^2>0,
$$
which is impossible since $\Psi(\eta(t, w))$ is decreasing and convergent. Hence, $\hat{w}$ must be a critical point of $\Psi$. The proof is complete.
\end{proof}

For a subset $P \subset S$ and $\delta > 0$, we define the $\delta$-neighborhood of $P$ by
$$
U_\delta(P) := \{w \in S: \text{dist}(w,P) < \delta\}. 
$$

\begin{lemma}\label{lem: DeformLemma2}
Let $d > b:=\inf_{w\in S}\Psi(w)$. Then for every $\delta > 0$ there exists $\epsilon = \epsilon(\delta) > 0$ such that
\begin{enumerate}
\item[\rm(a)] $\Psi^{d+\epsilon}_{d-\epsilon} \cap K = K_d$ and
\item[\rm(b)] $\lim_{t\to T^+(w)} \Psi(\eta(t,w)) < d - \epsilon$ for $w \in \Psi^{d+\epsilon} \setminus U_\delta(K_d)$.
\end{enumerate}
\end{lemma}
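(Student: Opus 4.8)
The plan is to prove the two assertions separately: part (a) rests on the discreteness of critical values forced by the standing finiteness assumption \eqref{eq: FinteSetAssumption}, while part (b) is a contradiction argument that converts a hypothetical ``slow'' trajectory into a pair of Palais--Smale sequences sitting at a forbidden intermediate distance, contradicting \cref{lem: DiscretenessOfPS-sequences}.

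For part (a), I first note that $\Psi$ is invariant under the $\mathbb{Z}^N$-action: since $V$ and $f$ are $T$-periodic and $\|\cdot\|$ is translation invariant, $\Psi$ is constant on each orbit $\mathcal{O}(u)$. By \cref{prop: PSforPhiAndPsi}(c) the set $K$ of critical points of $\Psi$ corresponds to the nontrivial critical points of $\Phi$, so $K$ is $\mathbb{Z}^N$-invariant and, under \eqref{eq: FinteSetAssumption}, is a finite union of orbits of the elements of $\mathcal{F}$. Hence the critical value set $\Psi(K)=\{\Psi(u):u\in\mathcal{F}\}$ is \emph{finite}. I then choose $\epsilon_0>0$ so small that $[d-\epsilon_0,d+\epsilon_0]$ meets $\Psi(K)$ in at most the single value $d$; this yields $\Psi^{d+\epsilon_0}_{d-\epsilon_0}\cap K=K_d$ immediately (the right-hand side being empty precisely when $d$ is not a critical value).

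For part (b), I argue by contradiction, assuming the conclusion fails for the given $\delta$ and every $\epsilon>0$. Choosing $\epsilon_n\to0^+$ produces $w_n\in\Psi^{d+\epsilon_n}\setminus U_\delta(K_d)$ with $c_n:=\lim_{t\to T^+(w_n)}\Psi(\eta(t,w_n))\geq d-\epsilon_n$. By \cref{lem: DeformLemma1} the limit $\hat w_n:=\lim_{t\to T^+(w_n)}\eta(t,w_n)$ exists, lies in $K$, and has $\Psi(\hat w_n)=c_n$. Since $\Psi(\eta(\cdot,w_n))$ is nonincreasing, $c_n\in[d-\epsilon_n,d+\epsilon_n]$, so for $n$ large ($\epsilon_n\le\epsilon_0$) part (a) forces $\hat w_n\in K_d$ and $c_n=d$; in particular the total energy drop along the trajectory is at most $\Psi(w_n)-c_n\le 2\epsilon_n\to0$. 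Moreover $w_n\notin U_\delta(K_d)$ and $\hat w_n\in K_d$ give $\|w_n-\hat w_n\|\geq\text{dist}(w_n,K_d)\geq\delta$, while $\eta(t,w_n)\to\hat w_n$.

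The crux is to manufacture a genuine Palais--Smale point inside a fixed annulus about $\hat w_n$. Fix $\mu$ with $0<\mu<\tfrac12\min\{\delta,\rho(d+\epsilon_0)\}$ and let $[t_n^1,t_n^2]$ be the crossing interval on which $g_n(t):=\|\eta(t,w_n)-\hat w_n\|$ runs through $[\mu,2\mu]$, with $t_n^1$ the last time $g_n=2\mu$ and $t_n^2$ the first subsequent time $g_n=\mu$ (both exist since $g_n$ decreases from $\geq\delta>2\mu$ to $0$). The pseudo-gradient inequalities give $\int_{t_n^1}^{t_n^2}\|\Psi'(\eta)\|^2\,dt\le 2\big(\Psi(\eta(t_n^1,w_n))-\Psi(\eta(t_n^2,w_n))\big)\le 4\epsilon_n$, while the displacement $\|\eta(t_n^1,w_n)-\eta(t_n^2,w_n)\|\geq\mu$ together with $\|H\|<2\|\Psi'\|$ and Cauchy--Schwarz forces $t_n^2-t_n^1\geq\mu^2/(16\epsilon_n)\to\infty$. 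Thus $\inf_{[t_n^1,t_n^2]}\|\Psi'(\eta)\|^2\le 4\epsilon_n/(t_n^2-t_n^1)\to0$, and I may pick $s_n\in[t_n^1,t_n^2]$ with $\|\Psi'(\eta(s_n,w_n))\|\to0$. Then $\{\eta(s_n,w_n)\}$ and $\{\hat w_n\}$ are two Palais--Smale sequences in $\Psi^{d+\epsilon_0}$ whose $\Psi$-values tend to $d$, and $\mu\le\|\eta(s_n,w_n)-\hat w_n\|\le2\mu$, so the distance neither tends to $0$ nor reaches $\rho(d+\epsilon_0)$. This contradicts \cref{lem: DiscretenessOfPS-sequences} (applied at level $d+\epsilon_0\ge b$), so (b) holds for all small $\epsilon$; taking $\epsilon=\min\{\epsilon_0,\cdot\}$ gives a single admissible $\epsilon$ for both parts. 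The main obstacle, and the reason \cref{lem: DiscretenessOfPS-sequences} is indispensable, is exactly this last step: the crossing endpoints $t_n^1,t_n^2$ carry no a priori gradient bound, so one cannot pair them directly; the remedy is to exploit the vanishing total energy drop over a crossing interval whose time-length blows up, extract an almost-critical time \emph{inside} the annulus, and pair it with the \emph{exact} critical limit $\hat w_n$.
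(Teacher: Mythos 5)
Your proof is correct, and while your part (a) coincides with the paper's (both reduce it to the finiteness of the critical value set $\Psi(K)=\Psi(\mathcal{F})$ under \eqref{eq: FinteSetAssumption} and the $\mathbb{Z}^N$-invariance of $\Psi$), your argument for part (b) takes a genuinely different route. The paper fixes $\delta$ and first manufactures two uniform constants near $K_d$: an upper gradient bound $A_\delta=\sup\{\|\Psi'(w)\|: w\in U_\delta(K_d)\}<\infty$ (from finiteness of $\mathcal{F}$ and invariance) and a lower gradient bound $\tau=\inf\{\|\Psi'(w)\|:w\in U_\delta(K_d)\setminus U_{\delta/2}(K_d)\}>0$ (from one application of \cref{lem: DiscretenessOfPS-sequences} to a sequence in the annulus paired with a sequence converging to a critical point); then any trajectory converging into $K_d$ must spend time at least $\delta/(4A_\delta)$ crossing the annulus, hence loses energy at least $\tau^2\delta/(8A_\delta)$, and choosing $\epsilon<\tau^2\delta/(8A_\delta)$ gives an explicit, quantitative threshold. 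You instead negate the conclusion along $\epsilon_n\to0^+$ (which is exactly the negation of ``(b) holds for all sufficiently small $\epsilon$,'' so your final deduction is logically complete), identify the limits $\hat w_n\in K_d$ via part (a) and \cref{lem: DeformLemma1}, and replace the paper's two uniform gradient bounds by a Cauchy--Schwarz/mean-value trick: the vanishing energy drop forces the crossing time of the fixed annulus $\{\mu\le\|\cdot-\hat w_n\|\le2\mu\}$ to blow up like $\mu^2/(16\epsilon_n)$, which lets you extract an almost-critical point $\eta(s_n,w_n)$ inside the annulus and pair it against the exact critical point $\hat w_n$, contradicting \cref{lem: DiscretenessOfPS-sequences} directly at level $d+\epsilon_0$. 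Both proofs hinge on the same discreteness lemma, but the paper's approach buys an explicit formula for $\epsilon(\delta)$, whereas yours is softer and dispenses with the uniform bounds $A_\delta$ and $\tau$ altogether, at the price of being purely qualitative; your device of pairing an extracted almost-critical point with the exact limiting critical point is a clean way to invoke the dichotomy without any auxiliary uniform estimates near $K_d$.
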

\begin{proof}
     By assumption \eqref{eq: FinteSetAssumption}, statement {\rm(a)} holds for all sufficiently small $\epsilon > 0$. Since $\mathcal{F}$ is finite and $\Psi'(w)$ is invariant under the action of $\mathbb{Z}^N$, there exists $\delta_0>0$ such that for all $\delta<\delta_0$, 
     $$
     A_{\delta}:=\sup\{\|\Psi'(w)\|:{w\in U_{\delta}(K_d)} \}<\infty.
     $$
    Without loss of generality, we may assume that $U_\delta(K_d)\subset\Psi^{d+1}$ and  $\delta<\min\{\delta_0,\rho(d+1)\}$.
    Define    
     $$
        \tau:=\inf\{\|\Psi'(w)\|:w\in U_{\delta}(K_d)\setminus U_{\frac{\delta}{2}}(K_d)\}.
     $$
     We first show that $\tau>0$. Suppose, by contradiction, that there exists a sequence $\{w_n\}_n\subset U_{\delta}(K_d)\setminus U_{\frac{\delta}{2}}(K_d)$ such that $\|\Psi'(w_n)\|\to0$. By the finiteness of $\mathcal{F}$ and the $\mathbb{Z}^N$-invariance of $\Psi'(w)$, we may assume that $\{w_n\}_n\subset U_{\delta}(w_0)\setminus U_{\frac{\delta}{2}}(w_0)$ for some $w_0\in K_d$. Let $\{v_n\}_n\subset U_{\delta}(w_0)$ be a sequence in $S$ that converges to $w_0$. Then $\|\Psi'(v_n)\|\to0$ and 
     $$
        \frac{\delta}{2}\leq\varlimsup_{n\to\infty}\|w_n-v_n\|\leq\delta<\rho(d+1),
     $$
     which contradicts \cref{lem: DiscretenessOfPS-sequences}. Hence $\tau>0$. 

     Next, we proceed to prove {\rm(b)}. By \cref{lem: DeformLemma1}, the limit $\hat{w}:=\lim_{t\to T^+(w)}\eta(t,w)$ exists and is a critical point of $\Psi$ for every $w\in S$. Therefore, it suffices to show that $\hat{w}\notin K_d$ for all $w\in \Psi^{d+\epsilon} \setminus U_\delta(K_d)$. Suppose, by contradiction, that there exists $w\in \Psi^{d+\epsilon} \setminus U_\delta(K_d)$ such that $\hat{w}\in K_d$. One can choose an interval $[t_1,t_2]\subset[0,+\infty)$ such that 
     $$
     \|\eta(t_1,w)-\hat{w}\|=\delta,\ \|\eta(t_2,w)-\hat{w}\|=\frac{\delta}{2}, \text{ and } \eta(t,w)\in U_{\delta}(\hat{w})\setminus U_{\frac{\delta}{2}}(\hat{w}) \text{ for all } t\in(t_1,t_2).
     $$ 
     Then
     $$
        \frac{\delta}{2}\leq\|\eta(t_2,w)-\eta(t_1,w)\|\leq\int_{t_1}^{t_2}\|H(\eta(t,w))\| dt\leq 2\int_{t_1}^{t_2}\|\Psi'(\eta(t,w))\| dt\leq 2A_{\delta}(t_2-t_1),
     $$
     which implies that $t_2-t_1\geq\frac{\delta}{4A_{\delta}}$. Furthermore,
     $$
     \begin{aligned}
         \Psi(\eta(t_2,w))-\Psi(\eta(t_1,w))&=-\int_{t_1}^{t_2}\langle\Psi'(\eta(t,w)),H(\eta(t,w))\rangle dt\\
         &\leq-\frac{1}{2} \int_{t_1}^{t_2}\|\Psi'(\eta(t,w))\|^2 dt\leq -\frac{\tau^2(t_2-t_1)}{2}\leq-\frac{\tau^2\delta}{8A_{\delta}}.
     \end{aligned}
     $$
     Now choose $\epsilon<\frac{\tau^2\delta}{8A_{\delta}}$ small enough so that {\rm (a)} holds. It follows that
     $$
     \Psi(\eta(t_2,w))\leq \Psi(\eta(t_1,w))-\frac{\tau^2\delta}{8A_{\delta}}\leq d+\epsilon-\frac{\tau^2\delta}{8A_{\delta}}<d,
     $$
     which contradicts the assumption that $\eta(t,w)\to\hat{w}\in K_d$.
\end{proof}

\begin{proof}[Proof of \cref{thm:InfiniteSolution}]
For each $k\in\mathbb{N}$, define
$$
c_k:=\inf\{d\in\mathbb{R}:\gamma(\Psi^d)\geq k\}.
$$
It suffices to prove the following stronger statement:
\begin{equation}\label{eq: StrongClaimForInfiniteSolutions}
    K_{c_k}\neq\varnothing\ \text{ and } \ c_k<c_{k+1} \ \text{ for all }k\in\mathbb{N}.
\end{equation}

Fix $k\in\mathbb{N}$. Since $K$ is a discrete set by \cref{lem: DiscretenessOfSolutions}, we have $\gamma(K_{c_k})=0$ or $1$. By \cref{prop: GenusProperties}, there exists $0<\delta<\frac{\kappa}{2}$ such that $\gamma(\overline{U})=\gamma(K_{c_k})$, where $U:=U_{\delta}(K_{c_k})$ is the $\delta$-neighborhood of $K_{c_k}$ and $\kappa$ is the constant given in \cref{lem: DiscretenessOfSolutions}. Choose $\epsilon=\epsilon(\delta)>0$ small enough so that the conclusion in \cref{lem: DeformLemma2} holds. It follows that for all $w\in \Psi^{c_k+\epsilon}\setminus U$ there exists $t\in[0,T^+(w))$ such that $\Psi(\eta(t,w)) < c_k- \epsilon$. Hence, we may define the entrance time map by
\begin{equation}
    e:\Psi^{c_k+\epsilon}\setminus U\to[0,\infty), \quad e(w):=\inf\{t\in[0,T^+(w)):\Psi(\eta(t,w))= c_k- \epsilon\}.
\end{equation}
Note that $e(w)$ is the unique $t\in[0,T^+(w))$ satisfying $\Psi(\eta(t,w))= c_k- \epsilon$ as $\Psi(\eta(t,w))$ is strictly decreasing for $w\in\Psi^{c_k+\epsilon}\setminus U$. Since $c_k-\epsilon$ is not a critical value for $\Psi$,
$$
\frac{d}{dt} \Psi(\eta(t,w)) = -\langle \Psi'(\eta(t,w)),H(\eta(t,w))\rangle<-\frac{1}{2}\|\Psi'(\eta(t,w))\|^2<0.
$$
By a general implicit function theorem, it follows that the map $e$ is continuous. Moreover, since $H$ is an odd vector field, $e$ is also an even map. We thus obtain an odd and continuous map
$$
h:\Psi^{c_k+\epsilon}\setminus U\to\Psi^{c_k-\epsilon},\quad h(w)=\eta(e(w),w).
$$
It then follows from \cref{prop: GenusProperties} that $\gamma({\Psi^{c_k+\epsilon}\setminus U})\leq\gamma(\Psi^{c_k-\epsilon})\leq k-1$. Therefore,
$$
k\leq \gamma(\Psi^{c_k+\epsilon})\leq \gamma(\overline{U})+\gamma({\Psi^{c_k+\epsilon}\setminus U})\leq \gamma(\overline{U})+k-1=\gamma(K_{c_k})+k-1.
$$
This inequality implies that $\gamma(K_{c_k}) \geq 1$ and $\gamma(\Psi^{c_k+\epsilon})\leq k$. Since $\gamma(K_{c_k}) \in \{0, 1\}$ by previous argument, we conclude that $\gamma(K_{c_k}) = 1$, and hence $K_{c_k}\neq\varnothing$. Moreover, $\gamma(\Psi^{c_k+\epsilon})\leq k$ implies that $c_{k+1} \geq c_k + \epsilon > c_k$, which completes the proof of \eqref{eq: StrongClaimForInfiniteSolutions}.

By \eqref{eq: StrongClaimForInfiniteSolutions}, there exits an infinite sequence $\{w_k\}_k$ of geometrically distinct critical points of $\Psi$ with $\Psi(w_k)=c_k$, contradicting the assumption \eqref{eq: FinteSetAssumption}. The proof is complete.

\end{proof}


\begin{proof}[Proof of \cref{thm:Cayley}]
    Let $\Gamma/H = \{\rho_1, \ldots, \rho_m\}$ be the set of finitely many orbits of the action of $H$ on $\Gamma$. Choose a fundamental domain $\Omega = \{z_1, \ldots, z_m\}\subset\Gamma$ such that $z_i \in \rho_i$, $1 \leq i \leq m$. The volume growth $V(r)\geq Cr^D$ ensures the validity of the Sobolev inequality \eqref{eq: SobolevOnCayley} on $\Gamma$. It follows that all arguments in the proof of \cref{thm: 1}, \cref{thm: 2} and \cref{thm:InfiniteSolution} remain valid, except for those relying on translational invariance to obtain a minimizing sequence with nonzero pointwise limit $\bar{u}$. 
    
    Now suppose we have established that $|u_n(x_n)| \geq \delta > 0$. For each $x_n$, there exists $h_n \in H$ such that $h_n x_n \in \Omega$. Define $\bar{u}_n := u_n \circ h_n^{-1}$. By the $H$-invariance of $V$ and $f$, the sequence $\{\bar{u}_n\}_n$ is again a minimizing sequence. Moreover,
    $$
    |\bar{u}_n(h_nx_n)|=|u_n(x_n)|\geq \delta>0,
    $$
    which implies that $\|\bar{u}_n\|_{\ell^\infty(\Omega)} \geq \delta$. Since $\Omega$ is a finite set, it follows that the pointwise limit of $\{\bar{u}_n\}$ is nonzero. The remainder of the proof then follows the same reasoning as in the previous theorems.
\end{proof}

\appendix

\section*{Appendix}
\phantomsection  
\setcounter{section}{1}
\setcounter{theorem}{0}

\begin{lemma}\label{lem: pInequality}
    Assume that $a_1,a_2\in\mathbb{R}$ and $p\geq 2$. Then
    $$
        |a_1-a_2|^{p}\leq 2^{p-1}(|a_1|^{p-2}a_1-|a_2|^{p-2}a_2)(a_1-a_2).
    $$
\end{lemma}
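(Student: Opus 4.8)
The plan is to reduce to a one–variable inequality and then split into the same–sign and opposite–sign cases, in each of which a single elementary convexity/monotonicity fact does the job. Write $\phi(t) := |t|^{p-2}t$, so the right–hand side is $2^{p-1}\bigl(\phi(a_1)-\phi(a_2)\bigr)(a_1-a_2)$. Both sides are symmetric under interchanging $a_1$ and $a_2$, and both are invariant under the simultaneous sign change $(a_1,a_2)\mapsto(-a_1,-a_2)$, since $\phi$ is odd. Using the symmetry I would assume without loss of generality that $a_1 > a_2$ (the case $a_1=a_2$ being trivial), so that $a_1-a_2>0$ and, dividing through by $a_1-a_2$, the claim becomes
\[
(a_1-a_2)^{p-1}\ \le\ 2^{p-1}\bigl(\phi(a_1)-\phi(a_2)\bigr).
\]

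First I would treat the case where $a_1$ and $a_2$ have the same sign (allowing either to vanish). By the oddness of $\phi$ it suffices to consider $a_1 > a_2 \ge 0$, where $\phi(a_i)=a_i^{p-1}$. Setting $q:=p-1\ge 1$, the function $u\mapsto u^q$ is superadditive on $[0,\infty)$: for $x\ge y\ge 0$ the map $x\mapsto x^q-(x-y)^q-y^q$ vanishes at $x=y$ and has nonnegative derivative $q\bigl(x^{q-1}-(x-y)^{q-1}\bigr)$, whence $(x-y)^q+y^q\le x^q$. Applying this with $x=a_1$, $y=a_2$ gives $(a_1-a_2)^{p-1}\le a_1^{p-1}-a_2^{p-1}=\phi(a_1)-\phi(a_2)$, which is even stronger than required since $2^{p-1}\ge 1$.

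The remaining, and main, case is when $a_1$ and $a_2$ have opposite signs; this is where the constant $2^{p-1}$ is genuinely needed. Here $a_1>0>a_2$, so $a_1-a_2=|a_1|+|a_2|$ and $\phi(a_1)-\phi(a_2)=|a_1|^{p-1}+|a_2|^{p-1}$, and the reduced inequality reads $(|a_1|+|a_2|)^{p-1}\le 2^{p-1}\bigl(|a_1|^{p-1}+|a_2|^{p-1}\bigr)$. This follows from the convexity of $u\mapsto u^q$ with $q=p-1\ge 1$: the midpoint inequality $\bigl(\tfrac{s+t}{2}\bigr)^q\le \tfrac{1}{2}(s^q+t^q)$ applied to $s=|a_1|$, $t=|a_2|$ yields $(s+t)^{p-1}\le 2^{p-2}(s^{p-1}+t^{p-1})\le 2^{p-1}(s^{p-1}+t^{p-1})$. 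Combining the two cases proves the lemma. The only delicate point is organizing the reductions so that the opposite–sign case — in which both $|a_1-a_2|$ and $\phi(a_1)-\phi(a_2)$ are honest sums rather than differences — is isolated and disposed of by convexity; everything else is routine, and the hypothesis $p\ge 2$ enters precisely through the convexity and superadditivity of $u\mapsto u^{p-1}$.
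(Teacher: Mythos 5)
Your proof is correct and follows essentially the same route as the paper: both arguments split into the opposite-sign case, where $|a_1-a_2|$ and $|a_1|^{p-2}a_1-|a_2|^{p-2}a_2$ become sums of absolute values and an elementary bound (your midpoint convexity, the paper's $\max$ estimate) yields the factor $2^{p-1}$, and the same-sign case, handled by a one-variable differentiation argument. The only real difference is that in the same-sign case you establish the sharper superadditivity bound $(a_1-a_2)^{p-1}\le a_1^{p-1}-a_2^{p-1}$ (constant $1$), whereas the paper keeps the factor $2^{p-1}$ and reduces to the inequality $(t-1)^{p-1}\le 2^{p-1}(t^{p-1}-1)$ for $t>1$.
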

\begin{proof}
    We first consider the case where $a_1\cdot a_2\leq0$. Then
    $$
    \begin{aligned}
        |a_1-a_2|^{p}=(|a_1|+|a_2|)^p&\leq(2\max\{|a_1|,|a_2|\})^{p-1}(|a_1|+|a_2|)\\
        &\leq 2^{p-1}(|a_1|^{p-1}+|a_2|^{p-1})(|a_1|+|a_2|)\\
        &=2^{p-1}(|a_1|^{p-2}a_1-|a_2|^{p-2}a_2)(a_1-a_2).
    \end{aligned}
    $$
    
    Now suppose that $a_1\cdot a_2>0$. Without loss of generality, we may assume that $a_1> a_2>0$. After dividing both sides by $a_2^{p-1}(a_1-a_2)$, it suffices to prove the inequality
    $$
     (t-1)^{p-1}\leq2^{p-1}(t^{p-1}-1) \quad\forall t>1,
    $$
    which can be verified directly using differentiation.
\end{proof}

\begin{lemma}\label{lem: LimitOfPS_Remians}
    Let $d\geq b$, where $b=\inf_{v\in S}\Phi(v)$. Suppose that 
    $$
    \{w_n\}_n\subset\Phi^d = \{u \in \mathcal{N} : \Phi(u) \leq d\}
    $$ 
    is a Palais-Smale sequence for $\Phi$ and $w_n\rightharpoonup w\in E\setminus\{0\}$, then $w\in\Phi^d\cap\Lambda$, where $\Lambda:=\{u\in E:\Phi'(u)=0\}.$
\end{lemma}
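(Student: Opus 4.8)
The plan is to split the conclusion $w\in\Phi^d\cap\Lambda$ into two independent assertions: first that $w$ is a nontrivial critical point, i.e. $\Phi'(w)=0$ (hence $w\in\mathcal{N}\cap\Lambda$ since $w\neq0$), and then that the energy does not jump up in the limit, i.e. $\Phi(w)\leq d$. The starting observation is that weak convergence delivers pointwise convergence: by the discrete Sobolev inequality \eqref{eq:DiscreteSobolev} the embedding $E\hookrightarrow\ell^q(\mathbb{Z}^N)$ is continuous for $q\geq p^*$, so each point-evaluation $\delta_x:u\mapsto u(x)$ is a bounded functional on $\ell^q$ and therefore lies in $E^*$. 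Testing the weak convergence $w_n\rightharpoonup w$ against $\delta_x$ gives $w_n(x)\to w(x)$ for every $x\in\mathbb{Z}^N$, and consequently $\nabla_{xy}w_n\to\nabla_{xy}w$ on every edge $x\sim y$.

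To obtain $\Phi'(w)=0$ I would test $\Phi'$ against an arbitrary $v\in C_c(\mathbb{Z}^N)$. On the one hand, the Palais–Smale property gives $|\langle\Phi'(w_n),v\rangle|\leq\|\Phi'(w_n)\|_{E^*}\|v\|\to0$. On the other hand, since $v$ has finite support and the graph is locally finite, all three sums in \eqref{eq: DerivativeOfPhi} collapse to \emph{finite} sums ranging over $\mathrm{supp}(v)$ and its neighbours; using the pointwise convergence together with the continuity of $s\mapsto|s|^{p-2}s$ (valid for all $p>1$, including at $s=0$) and the continuity of $f(x,\cdot)$ from \refcond{$S_2$}, each of these finitely many terms converges, so $\langle\Phi'(w_n),v\rangle\to\langle\Phi'(w),v\rangle$. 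Comparing the two limits yields $\langle\Phi'(w),v\rangle=0$ for every $v\in C_c(\mathbb{Z}^N)$. As $C_c(\mathbb{Z}^N)$ is dense in $E$ by construction and $\Phi'(w)\in E^*$, this forces $\Phi'(w)=0$, i.e. $w\in\Lambda$; since $w\neq0$ we in particular get $\langle\Phi'(w),w\rangle=0$, so $w\in\mathcal{N}$.

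For the energy bound I would avoid passing to the limit in $\Phi$ directly and instead use the Nehari representation \eqref{eq: PhiForNehariManifold}, which applies to both $w_n\in\mathcal{N}$ and $w\in\mathcal{N}$:
$$
\Phi(w_n)=\sum_{x\in\mathbb{Z}^N}\Big(\tfrac{1}{p}f(x,w_n)w_n-F(x,w_n)\Big),\qquad
\Phi(w)=\sum_{x\in\mathbb{Z}^N}\Big(\tfrac{1}{p}f(x,w)w-F(x,w)\Big).
$$
By \cref{lem: fu>pF} every summand $\tfrac{1}{p}f(x,s)s-F(x,s)$ is nonnegative, and by the pointwise convergence $w_n(x)\to w(x)$ together with continuity of $f$ and $F$ the summands for $w_n$ converge pointwise to those for $w$. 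Fatou's lemma for the counting measure on $\mathbb{Z}^N$ then gives $\Phi(w)\leq\liminf_{n\to\infty}\Phi(w_n)\leq d$, the last step because $w_n\in\Phi^d$. Combining this with the previous paragraph yields $w\in\mathcal{N}$, $\Phi(w)\leq d$ and $\Phi'(w)=0$, i.e. $w\in\Phi^d\cap\Lambda$.

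The step that genuinely needs care is the weak-to-strong passage in the nonlinear $p$-Laplacian term $|\nabla_{xy}w_n|^{p-2}\nabla_{xy}w_n$: there is no monotonicity or compactness available on the full infinite edge-sum, and the passage is legitimate only after restricting to compactly supported test functions so that the sum is finite and term-by-term continuity applies. The companion difficulty, interchanging the limit with the infinite vertex-sum in the energy, is not licensed for $\Phi(w_n)$ as written but is unlocked precisely by rewriting $\Phi$ through the Nehari identity \eqref{eq: PhiForNehariManifold}, after which the nonnegativity supplied by \cref{lem: fu>pF} makes Fatou's lemma directly applicable. This argument mirrors \textbf{Step 2} in the proof of \cref{thm: 1}, with the simplification that only the one-sided estimate $\Phi(w)\leq d$ is required here rather than an exact identification of the critical level.
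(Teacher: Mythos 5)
Your proof is correct and follows essentially the same route as the paper's: extract pointwise convergence from the weak limit, pass to the limit in $\langle\Phi'(w_n),v\rangle$ for compactly supported $v$ to get $\Phi'(w)=0$, then bound $\Phi(w)$ via the Nehari identity \eqref{eq: PhiForNehariManifold} combined with the nonnegativity from \cref{lem: fu>pF} and Fatou's lemma. The only difference is that you spell out details the paper leaves implicit (point evaluations lying in $E^*$, finiteness of the sums over $\mathrm{supp}(v)$, and the density of $C_c(\mathbb{Z}^N)$ in $E$), which is a faithful elaboration rather than a different argument.
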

\begin{proof}
     Since $w_n\to w$ pointwise as $n\to\infty$, for all $v\in C_c(\mathbb{Z}^N)$,
     $$
        |\langle\Phi'(w),v\rangle|=\lim_{n\to\infty} |\langle\Phi'(w_n),v\rangle|\leq \varliminf_{n\to\infty}\|\Phi'(w_n)\|_{E^*}\|v\| = 0.
     $$
     Hence $w\in\Lambda$. It remains to show that $\Phi(w)\leq d$. By identities \eqref{eq: ExpressionOfPhi} and \eqref{eq: ExpressionOf<Phi'u,u>}, we have 
     $$
     \Phi(u)=\frac{1}{p}\langle\Phi'(u),u\rangle+\sum_{x\in\mathbb{Z}^N}\left(\frac{f(x,u)u}{p}-F(x,u)\right)
     $$
     for all $u\in E$. Since $f(x,u)u\geq p F(x,u)$ by \cref{lem: fu>pF} and $w,w_n\in\mathcal{N}$, it follows from Fatou's lemma that
     $$
     \begin{aligned}
         \Phi(w)=\sum_{x\in\mathbb{Z}^N}\left(\frac{f(x,w)w}{p}-F(x,w)\right)&\leq\varliminf_{n\to\infty}\sum_{x\in\mathbb{Z}^N}\left(\frac{f(x,w_n)w_n}{p}-F(x,w_n)\right)\\
         &=\varliminf_{n\to\infty} \Phi(w_n)\leq d,
     \end{aligned}
     $$
     This completes the proof.
\end{proof}

\begin{proof}[Proof of \cref{lem: DiscretenessOfPS-sequences}]
     Let $\hat{u}_n := {m}(u_n)$ and $\hat{v}_n := {m}(v_n)$ for all $n \in \mathbb{N}$. Then by \cref{prop: PhiCoercive}, $\{\hat{u}_n\}_n, \{\hat{v}_n\}_n \subset \Phi^d$ are two Palais-Smale sequences for $\Phi$ with norms bounded by 
    $$
        \nu_d := \sup_{u \in \Phi^d} \|u\| < \infty
    $$
    for all $d \ge b$. Fix $q > p^*$ and consider the following two cases.
        
    \textbf{Case 1.} $\|\hat{u}_n-\hat{v}_n\|_q\to 0$ as $n\to\infty$. By \cref{lem: pInequality} we have 
    $$
    \sum_{\substack{x,y\in\mathbb{Z}^N\\x\sim y}} |\nabla_{xy} (\hat{u}_n - \hat{v}_n)|^p\le 2^{p-1} \sum_{\substack{x,y\in\mathbb{Z}^N\\x\sim y}} \big(|\nabla_{xy} \hat{u}_n|^{p-2} \nabla_{xy}\hat{u}_n - |\nabla_{xy} \hat{v}_n|^{p-2} \nabla_{xy}\hat{v}_n\big) (\nabla_{xy} (\hat{u}_n - \hat{v}_n)) 
    $$
    and
    $$
    \sum_{x\in\mathbb{Z}^N} V(x)|\hat{u}_n-\hat{v}_n|^p\leq2^{p-1} \sum_{x\in\mathbb{Z}^N}V(x) (|\hat{u}_n|^{p-2}\hat{u}_n-|\hat{v}_n|^{p-2}\hat{v}_n)(\hat{u}_n-\hat{v}_n).
    $$
    It then follows from the expression of $\langle \Phi'(u),v\rangle$ in \eqref{eq: DerivativeOfPhi} and the conditions \refcond{$S_2$}$\sim$\refcond{$S_3$} that for each fixed $\epsilon>0$ there exist $n_{\epsilon}$ and $C_{\epsilon}>0$ such that for all $n\geq n_{\epsilon}$,
    $$
    \begin{aligned}
        \|\hat{u}_n-\hat{v}_n\|^p&\leq2^{p-1}\left[\langle \Phi'(\hat{u}_n)-\Phi'(\hat{v}_n),\hat{u}_n-\hat{v}_n\rangle+\sum_{x\in\mathbb{Z}^N}(f(x,\hat{u}_n)-f(x,\hat{v}_n))(\hat{u}_n-\hat{v}_n)\right]\\
        &\leq \epsilon\|\hat{u}_n-\hat{v}_n\|+\sum_{x\in\mathbb{Z}^N}\left[\epsilon (|\hat{u}_n|^{p^*-1}+|\hat{v}_n|^{p^*-1})+C_\epsilon(|\hat{u}_n|^{q-1}+|\hat{v}_n|^{q-1})\right]|\hat{u}_n-\hat{v}_n|\\
        &\leq \epsilon\|\hat{u}_n-\hat{v}_n\|+4\epsilon\max\{\|\hat{u}_n\|_{p^*}^{p^*},\|\hat{v}_n\|_{p^*}^{p^*}\}+C_{\epsilon}(\|\hat{u}_n\|_{q}^{q-1}+\|\hat{v}_n\|_{q}^{q-1})\|\hat{u}_n-\hat{v}_n\|_{q}\\
        &\leq \epsilon [\|\hat{u}_n-\hat{v}_n\|+C_1]+C_{\epsilon}C_2\|\hat{u}_n-\hat{v}_n\|_{q},
    \end{aligned}
    $$
    where the third inequality follows from H$\rm\ddot{o}$lder's inequality and $C_1,C_2>0$ are constants independent of $\epsilon$. 
    Therefore, we have
    $$
    \varlimsup_{n\to\infty}\|\hat{u}_n-\hat{v}_n\|^p\leq\epsilon(\varlimsup_{n\to\infty}\|\hat{u}_n-\hat{v}_n\|+C_1).
    $$
    Since $\epsilon$ is arbitrary, it follows that $\lim_{n\to\infty}\|\hat{u}_n-\hat{v}_n\|=0$. Then by \cref{prop: mIsHomeomorphism} we conclude that $\lim_{n\to\infty}\|{u}_n-{v}_n\|=0$ as \refcond{$A_2$} and \refcond{$A_3$} are verified in the proof of \cref{thm: 1}.

    \textbf{Case 2.} $\delta:=\varlimsup\limits_{n\to\infty}\|\hat{u}_n-\hat{v}_n\|_q>0$. Since 
    $$
    \sup_n\|\hat{u}_n-\hat{v}_n\|\leq\sup_n\|\hat{u}_n\|+\sup_n\|\hat{v}_n\|\leq 2\nu_d,
    $$
    it follows from the discrete Sobolev inequality that
    $$
        \delta^q=\varlimsup_{n\to\infty}\|\hat{u}_n-\hat{v}_n\|_q^q\leq\varlimsup_{n\to\infty}\|\hat{u}_n-\hat{v}_n\|_{\infty}^{q-p^*}\|\hat{u}_n-\hat{v}_n\|_{p^*}^{p^*}\leq (2S_{p,p^*}\nu_d)^{p^*}\varlimsup_{n\to\infty}\|\hat{u}_n-\hat{v}_n\|_{\infty}^{q-p^*},
    $$
    which implies that $\lambda:=\varlimsup\limits_{n\to\infty}\|\hat{u}_n-\hat{v}_n\|_{\infty}>0$. Passing to a subsequence, we may assume that for all $n\in \mathbb{N}$ there exists $x_n\in\mathbb{Z}^N$ such that $|\hat{u}_n(x_n)-\hat{v}_n(x_n)|\geq\lambda/2$. Since both $\Phi$ and $\Psi$ are invariant under the action of $\mathbb{Z}^N$ and $\hat{m}$ is equivariant with respect to the action of $\mathbb{Z}^N$, we may apply suitable translations so that the sequence $\{x_n\}_n$ is bounded in $\mathbb{Z}^N$. Then, passing to a further subsequence, we have
    $$
        \hat{u}_n\rightharpoonup \hat{u}\in E,\quad \hat{v}_n\rightharpoonup \hat{v}\in E,\quad \hat{u}\neq\hat{v},\quad \Phi'(\hat{u})=\Phi'(\hat{v})=0,
    $$
    and $\|\hat{u}_n\|\to \alpha$, $\|\hat{v}_n\|\to \beta$ as $n\to\infty$. Let $c:=\sqrt[p]{pb}$, it follows from \cref{lem: Phi_BoundedBelow} that $\alpha,\beta\in[c,\nu_d]$. 
    
    We first consider the case where either $\hat{u}$ or $\hat{v}$ is $0$. Without loss of generality, assume that $\hat{u}=0$. Then $\hat{v}\neq0$, $\hat{v}\in\mathcal{N}$ and hence
    $$
    \varlimsup_{n\to\infty}\|u_n-v_n\|=\varlimsup_{n\to\infty}\left\|\frac{\hat{u}_n}{\|\hat{u}_n\|}-\frac{\hat{v}_n}{\|\hat{v}_n\|}\right\|\geq\|\frac{\hat{v}}{\beta}\|\geq\frac{c}{\nu_d},
    $$
    where the first inequality follows from the weak convergence of $\{\hat{v}_n\}_n$.

    It remains to consider the case where both $\hat{u}$ and $\hat{v}$ are not $0$. It follows that $\hat{u},\hat{v}\in\mathcal{N}$ and 
    $$
    u:=\frac{\hat{u}}{\|\hat{u}\|}\in S,\quad v:=\frac{\hat{v}}{\|\hat{v}\|}\in S,\quad u\neq v.
    $$
    Then by the weak convergence of $\{\hat{u}_n\}_n$ and $\{\hat{v}_n\}_n$ we have
    $$
    \varlimsup_{n\to\infty}\|u_n-v_n\|=\varlimsup_{n\to\infty}\left\|\frac{\hat{u}_n}{\|\hat{u}_n\|}-\frac{\hat{v}_n}{\|\hat{v}_n\|}\right\|\geq\|\frac{\hat{u}}{\alpha}-\frac{\hat{v}}{\beta}\|.
    $$
    Since ${1}/{\alpha},{1}/{\beta}\in[1/\nu_d,1/c]$ and $\hat{u},\hat{v}\in\Phi^d\cap \Lambda$ by \cref{lem: LimitOfPS_Remians}, it suffices to show that there exists a constant $\mu_d$ such that for all distinct $f,g\in\Phi^d\cap \Lambda$,
    $$
    \|\xi f-\eta g\|\geq\mu_d>0,\quad\forall\xi,\eta\in[1/\nu_d,1/c].
    $$
   Suppose, for contradiction, that no such $\mu_d$ exists. Then there exist sequences $\{\xi_n\}_n,\{\eta_n\}_n\subset[1/\nu_d,1/c]$ and $\{f_n\}_n,\{g_n\}_n\subset \Phi^d\cap \Lambda$ such that 
    $$
    \text{$f_n\neq g_n$ for all $n\in\mathbb{N}$\quad and \quad$\|\xi_n f_n-\eta_n g_n\|\to 0$ as $n\to\infty$.}
    $$
    Let $\check{f}_n:=m^{-1}(f_n)$ and $\check{g}_n:=m^{-1}(g_n)$. If $\|f_n-g_n\|_q\to 0$ as $n\to\infty$, then the same arguement as in \textbf{Case 1} implies that $\|f_n-g_n\|\to0$ as $n\to\infty$. It follows that $\|\check{f}_n-\check{g}_n\|\to 0$ as $n\to\infty$, which contradicts the result in \cref{lem: DiscretenessOfSolutions}. Therefore, we must have $\varlimsup\limits_{n\to\infty}\|{f}_n-{g}_n\|_q>0$. Proceeding as before, we may assume that 
    $$
        f_n\rightharpoonup f\in E,\quad g_n\rightharpoonup {g}\in E,\quad f\neq g,\quad \Phi'(f)=\Phi'(g)=0,
    $$
    and $\xi_n\to \xi$, $\eta_n\to\eta$ as $n\to\infty$, where $\xi,\eta\in[1/\nu_d,1/c]$. If either $f$ or $g$ is $0$, then
    $$
    \varlimsup_{n\to\infty}\|\xi_n{f}_n-\eta_n{g}_n\|\geq\max\{\|\xi f\|,\|\eta g\|\}\geq\frac{c}{\nu_d},
    $$
    which is a contradiction. Thus, both $f$ and $g$ must be nonzero. It then follows that
    $$
    \|\xi{f}-\eta{g}\|\leq \varliminf_{n\to\infty}\|\xi_n{f}_n-\eta_n{g}_n\|=0.
    $$
    Since $\xi,\eta>0$ and $f,g\in\mathcal{N}$, \cref{lem: UniquenessOft_u_G} implies that $f=g$, which is a contradiction. This completes the proof.
\end{proof}

\textbf{Acknowledgements.} 
I thank Bobo Hua for his continuous support and invaluable suggestions, which have greatly improved both the quality and the clarity of this manuscript. I also thank Zhiqiang Wang and Huyuan Chen for their helpful discussions and guidance during this research.

\bibliographystyle{plain}
\bibliography{main}

\noindent Xinrong Zhao, xrzhao24@m.fudan.edu.cn\\
\emph{School of Mathematical Sciences, Fudan University, Shanghai, 200433, P.R. China}\\[-8pt]
\end{document}